\definecolor{DarkGreen}{rgb}{0.2,0.6,0.2}
\definecolor{purple}{rgb}{0.6,0.3,0.8}
\def\d{\mathrm{d}}
\def\laweq{\buildrel \mathrm{d} \over =}
\newcommand{\E}{\mathbb{E}}
\newcommand{\R}{\mathbb{R}}
\newcommand{\p}{\mathbb{P}}
\newcommand{\id}{\mathds{1}}
\renewcommand{\ge}{\geqslant}
\renewcommand{\le}{\leqslant}
\renewcommand{\geq}{\geqslant}
\renewcommand{\leq}{\leqslant}
\renewcommand{\epsilon}{\varepsilon}
\newcommand{\esssup}{\mathrm{ess\mbox{-}sup}}
\newcommand{\essinf}{\mathrm{ess\mbox{-}inf}}
\renewcommand{\cdots}{\dots}
\theoremstyle{plain}
\newtheorem{theorem}{Theorem}
\newtheorem{corollary}{Corollary}
\newtheorem{lemma}{Lemma}
\newtheorem{proposition}{Proposition}
\theoremstyle{definition}
\newtheorem{definition}{Definition}
\newtheorem{example}{Example}
\theoremstyle{remark}
\newtheorem{remark}{Remark}
\newcommand{\ES}{\mathrm{ES}}
\begin{document}

\title{On convex order and supermodular order  without finite mean}

\author{Benjamin C\^ot\'e\thanks{Department of Statistics and Actuarial Science, University of Waterloo, Canada.   \texttt{b3cote@uwaterloo.ca}}\and  Ruodu Wang\thanks{Department of Statistics and Actuarial Science, University of Waterloo, Canada.   \texttt{wang@uwaterloo.ca}}}

\maketitle

\begin{abstract}

    Many results on the convex order in the literature were stated for random variables with finite mean. 
    For instance,  a fundamental result in dependence modeling is that the sum of a pair of random random variables  is upper bounded in convex order by that of its comonotonic version  and lower bounded by that of its counter-monotonic version, and all existing proofs of this result require the random variables' expectations to be finite. 
    We show that the above result remains true even when discarding the finite-mean assumption, and obtain several other results on the comparison of infinite-mean random variables via the convex order. To our surprise, we find two deceivingly similar definitions of the convex order,  both of which exist widely in the literature, and they are not equivalent for random variables with infinite mean.  This subtle discrepancy in definitions also applies to the supermodular order, and it gives rise to some incorrect statements, often found in the literature. 

    \textbf{Keywords}: Comonotonicity, counter-monotonicity,   heavy-tailed distributions, optimal transport,  concordance order. JEL: C69.
\end{abstract}

\section{Introduction}

The convex order is designed to compare the variability of distributions, and it is one of the most popular notions of stochastic comparison.
The convex order  and its variants, such as increasing convex order or concave order, are prominent tools in many fields; 
for instance, they  can model   attitudes of risk aversion in economics  (\cite{RS70}),  risk aggregation and risk sharing in actuarial science (e.g.,~\cite{DDGKV02} and \cite{DD12}),   
assessment of diversification in quantitative finance  (e.g.,~\cite{FS16} and \cite{MW20}),
and comparison between statistical experiments (\cite{T91}).
Through the classic representation theorem of \cite{S65},  
the convex order is fundamental to the construction of martingales (\cite{HPRY11}) and the theory of
martingale optimal transport (\cite{BHP13}).
Comprehensive textbook treatments of the convex order  can be found in \citet[Chapter~1]{muller2002} and   \citet[Chapter~3]{shaked2007}.

In this paper we explore some hidden aspects of the celebrated notion of convex order, and its related notion of supermodular order. 
Results in the literature on the convex order are often stated for the case of finite-mean random variables. Some authors, including the standard textbook \cite{muller2002}, exclude infinite-mean or undefined-mean random variables in their definitions. Heavy-tailed distributions 
are prominent in many scientific fields. 
In some contexts, the assumption of a finite mean is harmless, but
probabilistic models of infinite mean are encountered in many settings in finance and insurance; see \citet[Section~3]{CW25} for a survey. One naturally wonders whether the assumption of finite expectation only excludes trivial cases or  it makes a real difference to main properties of the convex order. This is the primary purpose of the paper.


We start the discussion by recalling a well-known result on the convex order, which is fundamental in the research area of risk aggregation.
Let $d\ge 2$, $(X_1,\dots,X_d)$ be a random vector with  finite-mean components,
and  $(X_1^{\rm co},\dots,X_n^{\rm co})$ be a comonotonic random vector such that $X_i^{\rm co}\laweq X_i$ for each $i\in \{1,\dots,d\}$.\footnote{Definitions of comonotonicity and counter-monotonicity are formally presented in Section \ref{sect:CXsum-explanations}.} 
We have (e.g., \citet[Theorem 7]{DDGKV02})
\begin{align}
\label{eq:convex-order-intro}
\begin{aligned}
& \E[f(X_1+\dots+X_d)]\le \E[f(X_1^{\rm co}+ \dots+X_d^{\rm co})] 
\\ & 
\mbox{for all convex functions $f$ such that the two expectations are finite.}
\end{aligned}
\end{align}
The relation \eqref{eq:convex-order-intro} is often seen as a special case of a more general relation\footnote{A function   $\varphi:\mathbb{R}^d\to \R$ is \textit{supermodular} if  
$\varphi(\mathbf {x}) + \varphi(\mathbf {y}) \le \varphi(\mathbf {x}\wedge \mathbf {y}) + \varphi(\mathbf {x}\vee \mathbf {y}) \mbox{ for all }\mathbf {x}, \mathbf {y}\in\mathbb{R}^d,$ where $\wedge$ represents taking the componentwise minimum and $\vee$, the componentwise maximum; see Section \ref{sec:discuss}.} 
\begin{align}
\label{eq:convex-order-intro2}
\begin{aligned}
&\E[\varphi(X_1,\dots,X_d)]\le \E[\varphi(X_1^{\rm co}, \dots, X_d^{\rm co})] \\ & 
\mbox{for all supermodular functions $\varphi$ such that the two expectations are finite.}
\end{aligned}
\end{align} 
The relation \eqref{eq:convex-order-intro2} is widely believed to hold true; see e.g.,~\citet[Theorem 3.9.8, Property 5]{muller2002} and we list several other references later.
The connection between \eqref{eq:convex-order-intro} 
and \eqref{eq:convex-order-intro2}
can be seen from the fact that $(x_1,\dots,x_d)\mapsto f(x_1+\dots+x_d)$ is supermodular whenever $f$ is convex. 

Our first, perhaps surprising, example shows that \eqref{eq:convex-order-intro2} is actually not true, at least when $d\ge 3$. As a consequence, \eqref{eq:convex-order-intro} does not directly follow from \eqref{eq:convex-order-intro2}.

\begin{example}
\label{ex:SIMONS}
Consider the random vector $(X,Y,Z)$ where all three random variables follow a standard uniform distribution. Assume that  $(X,Y)$ is counter-monotonic, and $(1/X + 1/Y,Z)$ is counter-monotonic. 
Write $V = 2/Z$ and $W = 1/X + 1/Y$. 
Consider the function $\varphi:\mathbb{R}^{3}\mapsto\mathbb{R}$ given by 
\begin{equation}
\label{eq:COUNTEREXAMPLE}
\varphi(x,y,z) = \frac{1}{x}\id_{\{0<x<1\}} + \frac{1}{y}\id_{\{0<y<1\}} - \frac{2}{z}\id_{\{0<z<1\}}, \quad \text{for }x,y,z\in\mathbb{R}. 
\end{equation}
This function is separable in $3$ variables, and therefore  it is supermodular. We have
\begin{align*}
\mathbb{E}[\varphi(X,Y,Z)] = \mathbb{E}[W-V] = \int_0^1 (F^{-1}_W(u)-F^{-1}_V(u))\d u,
\end{align*}
where $F_W^{-1}$ and $F_{V}^{-1}$ denote  the left quantile functions of $W$  and $V$, respectively,\footnote{Formally, $F_X^{-1}(t) =\inf\{x\in \R: \p(X\le x)\ge t\}$ for $t\in (0,1)$ and any random variable $X$.} and the second equality follows from the comonotonicity of $(W,V)$. One easily finds $F^{-1}_W(u) =  4/({1-u^2})$ and $F^{-1}_V(u) = 2/({1-u})$ for $u\in(0,1)$. Hence,   
\begin{align*}
\mathbb{E}[\varphi(X,Y,Z)] = \int_0^1 \frac{4}{1-u^2} - \frac{2}{1-u}\,\mathrm{d}u = \int_0^1 \frac{2}{1+u}\, \mathrm{d}u = 2\log 2.
\end{align*}
For the comonotonic random vector $(X^{\rm co},Y^{\rm co},Z^{\rm co})$ with standard uniform marginal distributions, $\varphi(X^{\rm co}, Y^{\rm co}, Z^{\rm co})=0$, which yields $\mathbb{E}[\varphi(X^{\rm co}, Y^{\rm co}, Z^{\rm co})] = 0 < \mathbb{E}[\varphi(X, Y, Z)]$. 
Therefore, \eqref{eq:convex-order-intro2} does not hold  for the bounded random vectors $(X,Y,Z)$ and $(X^{\rm co},Y^{\rm co},Z^{\rm co})$.
\end{example}

In Example~\ref{ex:SIMONS}, the unboundedness of $\varphi$ allowed for the uniform random variables at play to be transformed into infinite-mean Pareto random variables, whose sum has an unusual behavior under negative dependence, as recently studied by \cite{chen2024technical}.  Their Theorem~1 states that $W$ stochastically dominates $V$; thus, $\varphi(X,Y,Z)>0$ almost surely, and with no surprise it has a positive expectation.
 A similar example to Example~\ref{ex:SIMONS}, involving Cauchy distributions, can be built from \cite{S77}. Other counter-examples can   be constructed from the fact that the sum of standard  Cauchy random variables can be a non-zero constant; 
 see \citet[Theorem 4.2]{PRWW19}. Although providing useful examples, none of the above papers pointed out that \eqref{eq:convex-order-intro2} fails to hold.
Note that the relation \eqref{eq:convex-order-intro2} also fails for strictly supermodular functions, by adding a very small strictly supermodular term to $\varphi$ in the example.

 Example~\ref{ex:SIMONS} seems to refute many statements of \eqref{eq:convex-order-intro2} in well-cited papers, including \citet[Theorem 2.1(d)]{PW15} (from one of the authors of the current paper), \citet[Theorem 9.A.21]{shaked2007}, \citet[Theorem 3.9.8, Property 5]{muller2002}, \citet[Theorem 3.1]{MS00}, \citet[Proposition 6.3.7]{DDGK05} and \citet[Theorem 6.14]{R13}. Most of these results relied on the upper bound for expectations of supermodular functions provided in \citet[Theorem~5]{tchen1980}. However, requirements on the integrability of the functions used by \cite{tchen1980} were   stricter than \eqref{eq:convex-order-intro2}.
Proper restrictions on $\varphi$ for  \eqref{eq:convex-order-intro2} to hold can be found in optimal transport (OT) theory as in \citet[Theorem 5.10]{Villani} and \citet[Theorem 3.1.2]{R06MT}; these conditions involve some form of regularity and integrability. 
For instance,  the inequality in 
\eqref{eq:convex-order-intro2} holds true for
$\varphi$ that is bounded and lower semicontinuous. 
This issue will be formally  discussed in Section~\ref{sec:discuss}. 

A key feature of Example \ref{ex:SIMONS} is that $\E[\varphi(X,Y,Z)]$ involves terms that have infinite expectations, although these terms may be canceled for some particular dependence structures of $(X,Y,Z)$ as in  Example \ref{ex:SIMONS}.  
If the standard uniform random variables $X,Y,Z$ are independent, then $\E[\varphi(X,Y,Z)]$ is actually undefined, as it involves $\infty-\infty$. 
Therefore, 
the real trouble to cause 
 \eqref{eq:convex-order-intro2}  to fail is the presence of infinite mean. 

 With  \eqref{eq:convex-order-intro2} failing when infinite-mean terms arise, one naturally wonders whether \eqref{eq:convex-order-intro}  also fails when the random variables do not have a finite mean. It turns out that \eqref{eq:convex-order-intro} still holds true, even when we allow infinite expectations of $\E[f(X_1,\dots,X_n)]$ and $\E[f(X_1^{\rm co},\dots,X^{\rm co}_n)]$; this result is presented as Theorem \ref{th:CXm}.
Notably, when comparing two random variables  $X$ and $Y$ in convex order, requiring $\E[f(X)]$ and $\E[f(Y)]$ to be finite or not leads to two   distinct definitions of the convex order, co-existing  in the literature and commonly believed to be equivalent. This issue is formally discussed in  Section~\ref{sect:two-def}.
We show that  both definitions result in the same partial order on the space of finite-mean random variables, but the equivalence does not hold if the means of random variables are allowed to be infinite.

In Section~\ref{sect:conditions}, we give some equivalent conditions for the convex order to hold for infinite-mean random variables, which are slightly different from  common conditions used for finite-mean random variables. 
The  obtained conditions are useful to establish some later results. 

We proceed to study convex order relations between sums of random variables in Section~\ref{sect:CXsum-explanations}. 
In dimension $d=2$, \eqref{eq:convex-order-intro} becomes the upper bound in the following relation
(e.g.,~\citet[Corollary 3.28]{R13}):  For any $X,Y $ with finite mean,
 \begin{align}
 \label{eq:main-intro} 
 \begin{aligned}
&  \E[f(X^{\rm ct}+Y^{\rm ct})] \le \E[f(X+Y) ]\le \E[f(X^{\rm co}+Y^{\rm co})],
\\ & 
\mbox{for all convex functions $f$ such that the three expectations are finite,}
\end{aligned} 
\end{align} 
where $(X^{\rm co},Y^{\rm co})$ is  comonotonic, 
$(X^{\rm ct},Y^{\rm ct})$ is counter-monotonic,
 $X^{\rm co}$, $X^{\rm ct}$,
and $X$ are identically distributed,
and 
 $Y^{\rm co}$, $Y^{\rm ct}$,
and $Y$ are identically distributed. 
 A first version of this result, formulated quite differently, may be traced back at least to \cite{L53}; for a further historical remark, see \cite{PW15}. 
 Proofs of \eqref{eq:main-intro} in the literature 
 rely heavily on the assumption, although sometimes not emphasized, that the means of $X$ and $Y$ are finite.
This is the case, for instance, for \citet[Proposition 2]{KDG00}, \citet[Theorem 7]{DDGKV02} and \citet[Section 3.4]{DDGK05}, all focusing on the bound provided by the comonotonic version.
By removing  the assumption of finiteness of means,
we establish \eqref{eq:convex-order-intro}, \eqref{eq:main-intro}, and a corresponding lower bound in dimension $d\ge 3$, with the greatest generality.

As mentioned earlier, Section \ref{sec:discuss} addresses the supermodular order and its connection to OT theory, where we obtain several results that hold for random variables with infinite mean. As we can see from the discussions in  Section \ref{sec:discuss}, many  questions remain open, including whether \eqref{eq:convex-order-intro2} holds when $d=2$. 
Section \ref{sec:r1} discusses implications of our main results for risk management and decision making.
Whereas short proofs are kept within the corresponding sections, long proofs   are provided in Section~\ref{sect:CXsum-proof-Theorem1}.



\section{Two definitions of convex order}
\label{sect:two-def}
Let $L^0$ be the space of all random variables on an atomless probability space $(\Omega,\mathcal F,\p)$, $L^1$ be the space of all random variables with finite mean, and $L^{\infty}$ be the space of all essentially bounded random variables. We write $X\laweq Y$ to represent that $X$ and $Y$ follow the same distribution. Throughout, by saying that an expectation $\E[X]$ is well-defined, we mean $\E[X_+]<\infty$ or $\E[X_-]<\infty$,
 where $x_+=\max \{x,0\}$ and $x_-=\max\{-x,0\}$ for $x\in \R$. Denote by $\mathcal{U}_{\rm cx}$ the set of all convex functions on $\mathbb{R}$.

Although it may not be obvious to many researchers, 
two definitions of the convex order co-exist in the literature, and they are deceivingly similar.  
 
\begin{definition}
\label{def:cx-A}
For $X,Y\in L^0$,  we say that $Y$ dominates $X$ 
\begin{enumerate}[label=(\roman*), ref=(\roman*)]
    \item \label{item:cx-defA} in \emph{convex order}, denoted by $X\leq_{\rm cx} Y$,
if
$\E[u(X)]\leq \E[u(Y)]$ 
 for all $u\in\mathcal{U}_{\rm cx}$ such that the two expectations are well-defined; 
    \item \label{item:cx-defB} in \emph{$^\dagger\!$convex order}, denoted by $X\leq_{\rm cx}^\dagger Y$, if
$\E[u(X)]\leq \E[u(Y)]$
 for all $u\in\mathcal{U}_{\rm cx}$ such that the two expectations are finite. 
\end{enumerate}
\end{definition}

The definition as per item~\ref{item:cx-defA} is found in \cite{muller2002}, notably; the one as per item~\ref{item:cx-defB} is found in \cite{R13}, \cite{DDGK05} and \cite{shaked2007}. We added a dagger $\dagger$ to dispel possible ambiguity as to which definition we refer to henceforth. 
The difference is whether one must consider functions in $\mathcal{U}_{\rm cx}$ rendering infinite expectations to establish comparisons: one needs to for $\leq_{\rm cx}$ but not for $\leq_{\rm cx}^{\dagger}$. Because in the latter case $\mathbb{E}[u(X)]\le\mathbb{E}[u(Y)]$ is required for a smaller subset of functions $u\in\mathcal{U}_{\rm cx}$, we immediately have $X\leq_{\rm cx}Y\implies X\leq_{\rm cx}^{\dagger}Y$. When comparing random variables on $L^{\infty}$, the definitions are obviously equivalent since all $u\in\mathcal{U}_{\rm cx}$ inevitably produce finite expectations.   

The most plausible cause for this discrepancy to have arisen is that most versions of the convex order's definition state that the expectations must \textit{exist}, which can refer to either expectations be well-defined or finite depending on the authors' wording choices. In \cite{muller2002} and \cite{R06MT}, existence of expectations means they are well-defined;  for \cite{DDGK05} and \cite{shaked2007},
existence means finiteness.\footnote{They
do not explicitly indicate what the existence of expectations signifies, but it can be inferred from other results,  e.g., Property 1.8.9 and its proof in \cite{DDGK05} and Example 1.B.23 in \cite{shaked2007}.} This semantic inconsistency is not the product of either set of textbooks and is in fact not circumscribed to the literature on stochastic orders. Even in the standard probability textbook \citet[Fifth edition]{D19}, while
it is declared that an expectation that exists may be infinite (p.~28),
in some places the existence of an expectation is understood as finiteness (p.~29).

Another plausible cause for the discrepancy is that the nuance may have come off as inconsequential. In the following proposition, we show that the two definitions coincide when means are finite. It is not the case, however, when on the space of all random variables. 


\begin{proposition}
\label{prop:equivalent-def}
    On $L^1$, 
    the two relations
    $\le_{\rm cx}$
    and     $\le^\dagger_{\rm cx}$
    are equivalent.   
On $L^0$,   $\le_{\rm cx}$
 implies  $\le^\dagger_{\rm cx}$,
 but the converse is not true.     
\end{proposition}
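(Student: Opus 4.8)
The plan is to prove the proposition in three parts, matching its three assertions.

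First I would establish the equivalence on $L^1$. The implication $\le_{\rm cx}\implies\le_{\rm cx}^\dagger$ is immediate from the definitions, as the excerpt already notes: demanding the inequality for all $u\in\U_{\rm cx}$ with well-defined expectations is a stronger requirement than demanding it only for those $u$ with finite expectations. For the reverse implication on $L^1$, suppose $X\le_{\rm cx}^\dagger Y$ with $X,Y\in L^1$. I must show $\E[u(X)]\le\E[u(Y)]$ for every $u\in\U_{\rm cx}$ whose two expectations are merely well-defined. The key observation is that a convex function on $\R$ is bounded below by an affine function $\ell(x)=ax+b$ (take a supporting line at any point). Hence $u(X)=\ell(X)+(u(X)-\ell(X))$ where $u-\ell\ge0$, so $\E[u(X)_-]<\infty$ automatically when $X\in L^1$, meaning $\E[u(X)]$ is always well-defined and equals $\E[\ell(X)]+\E[(u-\ell)(X)]$ with the first term finite. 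Thus the only way an expectation can fail to be finite is by being $+\infty$. I would then approximate $u$ from below by a monotone increasing sequence of convex functions $u_n$ with finite expectations under both $X$ and $Y$ (for instance truncating the slopes, $u_n=\max$ of finitely many supporting lines, so each $u_n$ is bounded by an affine function on both sides and hence in $L^1$-expectation), apply $X\le_{\rm cx}^\dagger Y$ to each $u_n$, and pass to the limit by monotone convergence on the nonnegative parts $u_n-\ell$. This yields $\E[u(X)]\le\E[u(Y)]$ even when both sides are $+\infty$, giving $X\le_{\rm cx}Y$.

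Next, the implication $\le_{\rm cx}\implies\le_{\rm cx}^\dagger$ on all of $L^0$ is again immediate from the definitions, requiring no finiteness of means, so this costs nothing.

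The substantive content is the final claim: on $L^0$ the converse fails. Here I would exhibit an explicit counterexample, a pair $X,Y\in L^0$ with $X\le_{\rm cx}^\dagger Y$ but $X\not\le_{\rm cx}Y$. The natural candidates are infinite-mean random variables for which every convex $u$ with \emph{finite} expectations is vacuously or trivially comparable, yet some convex $u$ with \emph{well-defined but infinite} expectations separates them. I expect this to be the main obstacle, as one must design the distributions so that the two-sided-finiteness constraint eliminates all the discriminating test functions while a one-sided-infinite test function survives. A clean construction is to take $X$ and $Y$ both with infinite mean but differing in a way detected only by the identity-type convex function $u(x)=x$ (or $u(x)=x_+$): for example, choose $X,Y$ so that $\E[u(X)]$ and $\E[u(Y)]$ are finite only for $u$ that are eventually constant or bounded, on which the inequality holds, whereas $u(x)=x$ gives $\E[X]=+\infty$ while $\E[Y]$ is finite or smaller, violating $\le_{\rm cx}$. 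I would verify that for every $u\in\U_{\rm cx}$ with both expectations finite the inequality holds (establishing $\le_{\rm cx}^\dagger$), typically because finiteness forces $u$ to have sublinear growth matching the tails, and then display a single convex $u$ with well-defined expectations for which $\E[u(X)]>\E[u(Y)]$, refuting $\le_{\rm cx}$.
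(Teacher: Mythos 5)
Your $L^1$ argument is correct and takes a genuinely different route from the paper. The paper settles the equivalence by citation: it invokes the known characterization of $\le_{\rm cx}$ on $L^1$ (equal means plus ordered stop-loss transforms, from M\"uller--Stoyan) and the identical characterization of $\le_{\rm cx}^\dagger$ on $L^1$ (from Denuit et al.), and concludes the two orders coincide. You instead give a self-contained approximation proof: on $L^1$ every convex $u$ lies above a supporting line $\ell$, so $\E[u(X)]$ is automatically well-defined, and $u$ is the increasing limit of functions $u_n$ given by maxima of finitely many supporting lines; each $u_n$ has finite expectation under any integrable random variable, so $\le_{\rm cx}^\dagger$ applies to it, and monotone convergence applied to $u_n-\ell\ge 0$ passes the inequality to the limit even when one or both sides are $+\infty$. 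This is sound, provided (as your sketch effectively does) you keep the fixed minorant $\ell$ among the lines defining every $u_n$, so that $u_n-\ell\ge 0$ and $u_n-\ell\uparrow u-\ell$. Your route buys self-containedness and makes visible exactly where integrability of $X$ and $Y$ is used; the paper's route buys brevity by delegating to standard references.

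The gap is in the third part, which you yourself identify as the substantive content: you never fix a pair $(X,Y)$ and verify it, and the mechanism you describe is not quite right as stated. If $\E[X_+]=\infty$, $\E[X_-]<\infty$ and $Y\in L^1$, the convex functions with finite expectations under both $X$ and $Y$ are not only the ``eventually constant or bounded'' ones: finiteness of $\E[u(X)]$ forces $u$ to be non-increasing and bounded below, but \emph{all} such $u$ remain admissible test functions, and whether $\E[u(X)]\le\E[u(Y)]$ holds for this class is a genuine distributional condition (a decreasing-convex-order type condition), not something that holds vacuously; for instance it fails for $X$ Pareto on $[1,\infty)$ and $Y\equiv 2$, via $u(x)=(2-x)_+$. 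So your plan still requires choosing the distributions so that this residual class orders correctly, and proving it. It can be completed along your lines: take $X$ Pareto on $[1,\infty)$ with infinite mean and $Y\equiv 0$; then every admissible $u$ is non-increasing, so $\E[u(X)]\le u(1)\le u(0)=\E[u(Y)]$, giving $X\le_{\rm cx}^\dagger Y$, while $u(x)=x_+$ has well-defined expectations $\infty$ and $0$, refuting $X\le_{\rm cx}Y$. The paper's Example~2 sidesteps the verification entirely by taking $X\ge 0$ and $Y\le 0$ with $\E[X]=-\E[Y]=\infty$: there the only convex functions with finite expectations under \emph{both} variables are the constants, so $X\le_{\rm cx}^\dagger Y$ (and even $Y\le_{\rm cx}^\dagger X$) is immediate, and $x\mapsto x_+$, $x\mapsto x_-$ refute both $\le_{\rm cx}$ relations. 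As written, your proposal leaves the decisive verification as a hand-wave, so the final claim of the proposition is not yet proved.
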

\begin{proof}
    As discussed above, the implication $X\le_{\rm cx} Y \implies X\le_{\rm cx}^\dagger Y$ holds under both settings of $L^1$ and $L^0$.
    We first show the converse holds on $L^1$.   By Theorems 1.5.3 and 1.5.7 of \cite{muller2002},
    we have, for $X,Y\in L^1$, 
    \begin{align}
        \label{eq:alter-cx-1}
    X\le_{\rm cx} Y  
    \iff
    \E[X]=\E[Y] \mbox{~and~}
    \E[(X-w)_+] \le \E[(Y-w)_+] \mbox{ for all $w\in \R$.}
    \end{align} 
     By Proposition 3.4.3 of \cite{DDGK05}
    we have, for $X,Y\in L^1$, 
    \begin{align}
        \label{eq:alter-cx-2}
    X\le_{\rm cx}^\dagger Y  
    \iff
    \E[X]=\E[Y] \mbox{~and~}
    \E[(X-w)_+] \le \E[(Y-w)_+] \mbox{ for all $w\in \R$.}
    \end{align} 
   Putting \eqref{eq:alter-cx-1} and \eqref{eq:alter-cx-2} together, we get $    X\le_{\rm cx} Y  
    \iff     X\le_{\rm cx}^\dagger Y$. 
To show that $\le_{\rm cx}^{\dagger}$ does not imply $\le_{\rm cx}$ on $L^0$, 
a counter-example is provided in Example \ref{ex:different-def} below.  
\end{proof}

 Because the $^{\dagger}$convex order gets rid of infinite expectations, whether positive or negative, one can design a comparison scheme where all elements $u\in\mathcal{U}_{\rm cx}$, except for constant functions, are removed because of either random variable. 
 
\begin{example}
\label{ex:different-def}
    Consider two random variables $X\ge 0$  and $Y \le 0 $  with $\mathbb{E}[X]=-\mathbb{E}[Y]=\infty$.      
 Any convex function $u$  with $\mathbb{E}[u(X)]< \infty$ must not have any strictly increasing part,
and that with
$\mathbb{E}[u(Y)]<\infty$ must not have any strictly decreasing part. With both requirements, we are left only with the constant functions,
for which $\mathbb{E}[u(X)] = \mathbb{E}[u(Y)]$ clearly holds, and this implies  both $X\leq_{\rm cx}^{\dagger} Y$  and $Y\leq_{\rm cx}^{\dagger} X$. Meanwhile, for convex functions $u_1:x\mapsto x_+$ and $u_2:x\mapsto x_-$, we note $\mathbb{E}[u_1(X)] = \infty > 0 = \mathbb{E}[u_1(Y)]$ and $\mathbb{E}[u_2(X)] = 0 < \infty = \mathbb{E}[u_2(Y)]$; therefore, neither 
    $X\leq_{\rm cx} Y$ nor $Y\leq_{\rm cx} X$   holds.
\end{example}

   \cite{muller2002} restricted their definition of the convex order to random variables in $L^1$, based on limitations of the convex order beyond $L^1$ identified by \cite{elton1992fusions}.  Given Proposition~\ref{prop:equivalent-def}, \cite{muller2002}, \cite{DDGK05},  \cite{shaked2007}, and \cite{R13}, as well as many others,  all effectively work under the same definition on $L^1$.

Example~\ref{ex:different-def} illustrates that $X,Y\in L^0$ may be comparable with respect to the $^\dagger$convex order even if $\mathbb{E}[X]\neq\mathbb{E}[Y]$, when these means are infinite.
 It moreover showcases that $\leq_{\rm cx}^{\dagger}$ does not satisfy antisymmetry on $L^0$, as $X\le_{\rm cx}^{\dagger}Y$ and $Y\le_{\rm cx}^{\dagger}X$ despite $X\not\stackrel{\rm d}{=}Y$. Neither does the order $\leq_{\rm cx}$ satisfy antisymmetry on $L^0$; see Example~\ref{ex:cauchy} below. 

\begin{example}
\label{ex:cauchy}
    Let $X$ follow a standard Cauchy distribution, and $Y = 2X$.  Recall that the Cauchy distribution has no finite moment and is symmetric; in this vein, no convex function $u$ yields finite expectations $\E[u(X)]$ and $\E[u(Y)]$ besides $u$ constant. If expectations are well-defined, then
    $\mathbb{E}[u(X)] = \mathbb{E}[u(Y)] = \infty$. 
     Hence, $X\leq_{\rm cx}Y$ and $Y\leq_{\rm cx}X$ hold. The two random variables, however, are not identically distributed.  
\end{example}

We should note that for random variables $X$ and $Y$ that do not have a finite mean, $Y \le_{\rm cx} X$ loses the classic interpretation that the distribution of $X$ is more spread out than that of $Y$. For instance, in the above example, $Y=  2 X$ and $X$ has a symmetric distribution with respect to $0$. Thus, the distribution of $Y$ is more spread out than that of $X$, despite $Y\le_{\rm cx}X$.
 
The following example was inspired from the discussion in \cite{M97integral} and showcases that $\leq_{\rm cx}^{\dagger}$ does not satisfy transitivity.  
\begin{example}
    \label{ex:transitivity}
    Let $X\in L^0$ follow a standard Cauchy distribution, and $Y,Z\in L^1$ be such that $\mathbb{E}[Y]\neq\mathbb{E}[Z]$. Because $\E[u(X)]$ is finite for no $u\in\mathcal{U}_{\rm cx}$ other than constant ones, we have the chain of relations $ Y\leq_{\rm cx}^{\dagger} X \leq_{\rm cx}^{\dagger} Z\leq_{\rm cx}^{\dagger} X \leq_{\rm cx}^{\dagger} Y$. Yet, from \eqref{eq:alter-cx-2}, $Y\not \leq_{\rm cx}^{\dagger} Z$  and $Z\not \leq_{\rm cx}^{\dagger} Y$.
\end{example}
The flaw exhibited in Example~\ref{ex:transitivity} does not spread to $\leq_{\rm cx}$, as we attest by the next proposition. 
 Still, $\leq_{\rm cx}$ is not a partial order on $L^0$ because it fails to satisfy antisymmetry; in contrast, $\le_{\rm cx}$ and $\le_{\rm cx}^{\dagger}$ are partial orders on $L^1$.  
\begin{proposition}
    \label{prop:transitivity}
    The relation $\leq_{\rm cx}$ is a preorder on $L^0$.
\end{proposition}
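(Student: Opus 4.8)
The plan is to show that $\leq_{\rm cx}$ on $L^0$ satisfies the two defining properties of a preorder: reflexivity and transitivity. Reflexivity is immediate, since for any $X \in L^0$ and any $u \in \mathcal{U}_{\rm cx}$ the inequality $\E[u(X)] \leq \E[u(X)]$ holds trivially whenever the (single) expectation is well-defined; hence $X \leq_{\rm cx} X$. So the entire content of the proposition is transitivity, and that is where I would focus.

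**For transitivity**, suppose $X \leq_{\rm cx} Y$ and $Y \leq_{\rm cx} Z$, and I want to conclude $X \leq_{\rm cx} Z$. Fix an arbitrary $u \in \mathcal{U}_{\rm cx}$ for which both $\E[u(X)]$ and $\E[u(Z)]$ are well-defined; I must show $\E[u(X)] \leq \E[u(Z)]$. The naive approach—chain $\E[u(X)] \leq \E[u(Y)] \leq \E[u(Z)]$ through the intermediate variable $Y$—is exactly the move that fails for $\leq_{\rm cx}^\dagger$ in Example~\ref{ex:transitivity}, because $\E[u(Y)]$ need not be well-defined, and even if one of the two given comparisons does not apply to this particular $u$ (the hypothesis only controls those $u$ for which the relevant pair of expectations is well-defined). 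So the key obstacle is precisely handling the intermediate quantity $\E[u(Y)]$: I cannot assume it is well-defined.

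**The resolution I would pursue** is a case analysis driven by the structure of convex functions. Every convex $u$ on $\R$ has left and right limiting slopes at $\pm\infty$, so its growth at $+\infty$ is asymptotically linear (or $+\infty$) and likewise at $-\infty$; the integrability of $u(X)_+$ and $u(X)_-$ is therefore governed by the tail behavior of $u$ together with $\E[X_+]$ and $\E[X_-]$. The plan is to split according to whether $\E[u(Y)]$ is well-defined. If it is, the chaining argument goes through directly. If it is not, then $\E[u(Y)_+] = \E[u(Y)_-] = \infty$, which forces $u$ to grow (at least linearly) toward $+\infty$ at both ends; I would then use the hypotheses applied to simpler convex test functions—such as $x \mapsto (x-w)_+$, $x \mapsto (w-x)_+$, and the identity together with its negative—to pin down the relationship between the tails of $X$ and $Z$, and combine these to recover $\E[u(X)] \leq \E[u(Z)]$ without passing through the ill-defined $\E[u(Y)]$. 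In effect, I would decompose the comparison $X \leq_{\rm cx} Y$ into the increasing-convex and decreasing-convex (i.e.\ increasing-concave in the flipped variable) parts that $\leq_{\rm cx}$ encodes, since those one-sided comparisons are always well-defined and are preserved under chaining.

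**I expect the main difficulty** to be the case where $\E[u(Y)]$ is undefined: there the argument cannot be a one-line chain and instead must reconstruct the needed inequality from the well-defined building blocks, carefully verifying that the requisite auxiliary expectations (those of $(X-w)_+$, $(Z-w)_+$, and their reflections) are genuinely well-defined so the hypotheses $X \leq_{\rm cx} Y$ and $Y \leq_{\rm cx} Z$ can legitimately be invoked on them. The contrast with Example~\ref{ex:transitivity}, where $\leq_{\rm cx}^\dagger$ fails transitivity, should be instructive: the extra strength of $\leq_{\rm cx}$ is exactly that it constrains behavior through the infinite-expectation test functions, and it is this additional information that must be leveraged to close the gap that $\leq_{\rm cx}^\dagger$ leaves open.
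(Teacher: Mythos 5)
Your setup and your first case are exactly right and match the paper: reflexivity is trivial, and when $\E[u(Y)]$ is well-defined the chain $\E[u(X)]\le\E[u(Y)]\le\E[u(Z)]$ goes through. The gap is that the second case --- the entire content of the proposition --- is never actually closed. Two concrete problems. First, your structural claim about $u$ is backwards: if $\E[u(Y)_+]=\E[u(Y)_-]=\infty$, then $u$ \emph{cannot} ``grow toward $+\infty$ at both ends,'' because a convex function tending to $+\infty$ at both ends is bounded below, which would force $\E[u(Y)_-]<\infty$; what actually happens is that $u$ tends to $+\infty$ at one end and to $-\infty$ at the other. Second, and more seriously, your plan to ``combine'' comparisons of the test functions $x\mapsto(x-w)_+$ and $x\mapsto(w-x)_+$ into the inequality $\E[u(X)]\le\E[u(Z)]$ is not a routine combination step: for a general convex $u$ (say of exponential growth), no finite manipulation of stop-loss bounds yields $\E[u(X)]\le\E[u(Z)]$. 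What you are implicitly proposing is to prove that the two-sided stop-loss ordering \eqref{eq:stoploss} implies $X\le_{\rm cx}Z$ on $L^0$, which is precisely the implication \ref{item:cond-3}$\Rightarrow$\ref{item:cond-1} of Theorem~\ref{th:ES-based-condition} --- a substantial result whose proof in the paper requires Lemma~\ref{th:lemma}, truncation, and monotone convergence. (Your chaining idea is sound and could be salvaged by citing that theorem, with no circularity since its proof does not use transitivity, but your proposal neither invokes it nor supplies the argument.)

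The paper's own resolution of the hard case is a one-line trick that your proposal misses: apply the hypothesis $Y\le_{\rm cx}Z$ to the \emph{single} convex function $x\mapsto u(x)_+=\max\{u(x),0\}$, which is nonnegative and therefore always has a well-defined expectation. If $\E[u(Y)]$ is not well-defined, then $\E[u(Y)_+]=\infty$, so $\E[u(Z)_+]\ge\E[u(Y)_+]=\infty$; since $\E[u(Z)]$ is assumed well-defined, this forces $\E[u(Z)_-]<\infty$ and hence $\E[u(Z)]=+\infty\ge\E[u(X)]$. This uses only the hypothesis on a test function that is automatically admissible, exactly the extra strength of $\le_{\rm cx}$ over $\le_{\rm cx}^\dagger$ that you correctly identified in the abstract but did not manage to exploit concretely.
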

\begin{proof}
     The reflexivity of $\leq_{\rm cx}$ follows trivially from  definition; remains to show transitivity. Consider $X,Y,Z \in L^0$ and suppose $X\leq_{\rm cx} Y$ and $Y\leq_{\rm cx} Z$.
     Let $u\in\mathcal{U}_{\rm cx}$ be such that $\E[u(X)]$ and $\E[u(Z)]$ are well-defined. 
     If $\E[u(Y)]$ is also well-defined, then $\mathbb{E}[u(X)]\leq \mathbb{E}[u(Y)] \leq \mathbb{E}[u(Z)]$.
     If $\E[u(Y)]$ is not well-defined, then
  $\infty=\mathbb{E}[u(Y)_+] \leq \mathbb{E}[u(Z)_+]$ since the function $x\mapsto u(x)_+$ is   convex.
Hence, $\E[u(Z)]=\infty$ and thus  $\mathbb{E}[u(X)] \leq \mathbb{E}[u(Z)]$. 
This shows $X\le_{\rm cx} Z$.
     \end{proof}

The convex order falls within the more general class of integral stochastic orders (\cite{W86,M97integral}). An integral stochastic order 
$\leq_{\mathfrak{F}}$, indexed by a set $ \mathfrak{F}$ of real-valued functions,
is 
defined on a set  $\mathcal{L}$  of  random vectors taking values in $\R^d$ via 
\begin{equation}
\label{eq:integralstoorder}
 X\le_{\rm \mathfrak{F}} Y \iff \mathbb{E}[u(X)]\leq \mathbb{E}[u(Y)]\text{ for all }u\in\mathfrak{F};\quad  \quad\quad X,Y\in\mathcal{L}.
\end{equation} 
 The set $\mathcal L$ is called the domain of $\leq_{\mathfrak{F}}$.
  The convex order and the supermodular order (see Section~\ref{sec:discuss}) are     instances of integral stochastic orders with $d=1$. 
%
To avoid situations such as in Example~\ref{ex:transitivity} and ensure integral orders be preorders, the setting of \cite{M97integral} additionally requires 
\begin{equation}
\label{eq:bfrak}
\mbox{there exists $b_\mathfrak F:\R^d\to [1,\infty)$ such that }
    \sup_{x\in \R^d}\frac{|u(x)|}{b_{\mathfrak{F}}(x)} < \infty   \text{ for all } u\in \mathfrak{F},
\end{equation}
and excludes from the   set $\mathcal{L}$ every   $X$ with $\mathbb{E}[b_{\mathfrak{F}}(X)]=\infty$. Expectations in \eqref{eq:integralstoorder} are then finite for all $X,Y\in\mathcal{L}$. 
The set $\mathcal{L}$ is restricted by $\mathfrak{F}$  via the function $b_{\mathfrak{F}}$.   
Note that our definitions of $\leq_{\rm cx}$ and $\leq_{\rm cx}^{\dagger}$ do not restrict the set $\mathcal L$. When defined on $L^1$,  $\leq_{\rm cx}$ and $\leq_{\rm cx}^{\dagger}$ are  consistent with the above restriction: Taking $ {\mathfrak{F}}=\{I_\R,-I_\R\}\cup\{x\mapsto(x-w)_+:w\in\mathbb{R}\}$, where $I_\R$ is the identity on $\R$, one can take $b_{\mathfrak{F}} = I_\R\vee 1$ and $\mathcal{L}=L^1$ in \eqref{eq:bfrak}. By \eqref{eq:alter-cx-1} and \eqref{eq:alter-cx-2}, the relation $\leq_{\mathfrak{F}}$  is equivalent to $\leq_{\rm cx}$ and $\leq_{\rm cx}^{\dagger}$ on $L^1$.

An advantage of the restriction  \eqref{eq:bfrak} for integral stochastic orders is that one never has to worry about infinite expectations and the troubles arising along. The drawbacks are limitations on the applicability of these orders.
For instance, one cannot consider the complete set of random variables in $L^0$ unless $\mathfrak{F}$ only contains bounded functions; note that bounded convex functions are constant.
As we will discuss in the upcoming sections, $\leq_{\rm cx}$ continues to satisfy main properties of the convex order on $L^0$. Therefore, narrowing arbitrarily the domain for $\leq_{\rm cx}$ seems unnecessary. This is indeed one of the main arguments of the paper: one need not limit to $L^1$. In Section~\ref{sec:discuss}, we will discuss how, on the contrary, a narrowing of the domain can be beneficial for the supermodular order.

\section{Equivalent conditions for the convex order}
\label{sect:conditions}

There are many conditions to verify convex order. A common equivalent condition for $X\le_{\rm cx } Y$ when $X,Y\in L^1$ is  given in 
\eqref{eq:alter-cx-1}; another one is
(\citet[Theorem 3.A.5]{shaked2007})
\begin{align}
    \label{eq:alter-cx}
\E[X]=\E[Y]
    \mbox{~~~ and ~~~}\int_p^1 F_X^{-1}(t)\d t \le \int_p^1 F_Y^{-1}(t)\d t \mbox{~for all $p\in (0,1)$}.
\end{align}  
Condition \eqref{eq:alter-cx}, as well as the second condition in \eqref{eq:alter-cx-1}, is not sufficient for either $X\le_{\rm cx } Y$  or $X\le_{\rm cx }^\dagger Y$  with $X,Y\in L^0$, because \eqref{eq:alter-cx}  holds as soon as $\E[X_+]=\E[Y_+]=\infty$,  $\E[X_-]<\infty$,
and $\E[Y_-]<\infty$, but   $X\le_{\rm cx }^\dagger Y$ and $X\le_{\rm cx } Y$  fail to hold  when $\E[X_-]>\E[Y_-]$ because $x\mapsto x_-$ is convex. 
Moreover, even when the means are well-defined, \eqref{eq:alter-cx} is not necessary for $X\le_{\rm cx }^\dagger Y$ as we see in Example \ref{ex:different-def}.  
Nevertheless,  similar conditions to \eqref{eq:alter-cx-1}  and \eqref{eq:alter-cx}   turn  out to be equivalent to $X\le_{\rm cx } Y$  for all $X,Y\in L^0$.
\begin{theorem}
\label{th:ES-based-condition}
For $X,Y\in L^0,$  the following are equivalent: 
\begin{enumerate}[label= \rm(\roman*), ref=(\roman*)]
\item \label{item:cond-1}
$X\le_{\rm cx } Y$; 
\item \label{item:cond-2} it holds that
\begin{align}
    \label{eq:cond-ES}
\int_0^p F_X^{-1}(t)\d t \ge \int_0^p F_Y^{-1}(t)\d t
    \mbox{~~and~~}\int_p^1 F_X^{-1}(t)\d t \le \int_p^1 F_Y^{-1}(t)\d t \mbox{~~~for all $p\in (0,1)$};
\end{align}  

\item \label{item:cond-3} it holds that
\begin{equation}
    \mathbb{E}[(X-w)_-]\leq \mathbb{E}[(Y-w)_-]  \mbox{~~and~~}  \mathbb{E}[(X-w)_+]\leq \mathbb{E}[(Y-w)_+]   \mbox{~~~for all }w\in\mathbb{R}.  \label{eq:stoploss}
\end{equation} 
\end{enumerate}
\end{theorem}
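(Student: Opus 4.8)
The plan is to establish the equivalence as a cycle in which \ref{item:cond-1}$\Rightarrow$\ref{item:cond-3} is immediate, \ref{item:cond-2}$\Leftrightarrow$\ref{item:cond-3} is a conjugacy computation, and the substantive step is \ref{item:cond-3}$\Rightarrow$\ref{item:cond-1}. For \ref{item:cond-1}$\Rightarrow$\ref{item:cond-3} I would simply observe that for each $w$ the maps $x\mapsto(x-w)_+$ and $x\mapsto(x-w)_-$ are convex and \emph{non-negative}, so $\E[(X-w)_\pm]$ and $\E[(Y-w)_\pm]$ are always well-defined (valued in $[0,\infty]$); they are therefore legitimate test functions in Definition~\ref{def:cx-A}\ref{item:cx-defA}, and $X\le_{\rm cx}Y$ yields \eqref{eq:stoploss} at once.

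For \ref{item:cond-2}$\Leftrightarrow$\ref{item:cond-3} I would record the two conjugacy identities
\begin{equation*}
\E[(X-w)_+]=\sup_{p\in[0,1]}\Big(\int_p^1 F_X^{-1}(t)\,\d t-w(1-p)\Big),\qquad \E[(X-w)_-]=\sup_{p\in[0,1]}\Big(wp-\int_0^p F_X^{-1}(t)\,\d t\Big),
\end{equation*}
which follow from $\E[(X-w)_+]=\int_0^1(F_X^{-1}(t)-w)_+\,\d t$ by splitting at the threshold $p$ with $F_X^{-1}(p)=w$. Then \ref{item:cond-2}$\Rightarrow$\ref{item:cond-3} is monotonicity of the supremum, while \ref{item:cond-3}$\Rightarrow$\ref{item:cond-2} is the inverse (Fenchel--Moreau) direction: since $p\mapsto\int_0^p F_X^{-1}$ is convex and $p\mapsto\int_p^1 F_X^{-1}$ is concave (as $F_X^{-1}$ is non-decreasing), each partial-quantile integral is recovered as the conjugate of the corresponding stop-loss transform, so the pointwise inequalities in \eqref{eq:stoploss} transpose exactly into those in \eqref{eq:cond-ES}. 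The only care needed here is the bookkeeping of $\pm\infty$ values at the endpoints $p\in\{0,1\}$.

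For \ref{item:cond-3}$\Rightarrow$\ref{item:cond-1}, take a convex $u$ with $\E[u(X)]$ and $\E[u(Y)]$ well-defined, let $\nu=\d u'_+$ be the non-negative measure generated by the right derivative, and use the representation
\begin{equation*}
u(x)=u(0)+L_-\,x+\int_\R\big[(x-t)_+-(-t)_+\big]\,\nu(\d t),\qquad L_-=u'_+(-\infty).
\end{equation*}
The tail integrals are non-negative, and by Tonelli their expectations are mixtures of $\E[(X-t)_+]$ and $\E[(X-t)_-]$, hence dominated termwise under \eqref{eq:stoploss}; equivalently, subtracting a supporting line at a point $a$ gives a non-negative convex $u_a$ with $\E[u_a(X)]\le\E[u_a(Y)]$. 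I would first dispose of the trivial cases $\E[u(X)]=-\infty$ or $\E[u(Y)]=+\infty$, and then apply the non-negative convex function $u_+=\max(u,0)$ (again controlled by \eqref{eq:stoploss} via Tonelli) to get $\E[u(X)_+]\le\E[u(Y)_+]$, which rules out $\E[u(X)]=+\infty$ together with $\E[u(Y)]<\infty$.

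The crux, and the step I expect to be the main obstacle, is the affine term $L_-x$, i.e.\ the contribution of the mean, which classical finite-mean proofs neutralize using $\E[X]=\E[Y]$ — an identity unavailable here. My plan is to replace it by generalized-mean relations read off from \eqref{eq:stoploss}: adding $w$ to the first inequality and letting $w\to-\infty$ gives $\lim_w(\E[(X-w)_+]+w)\le\lim_w(\E[(Y-w)_+]+w)$, and symmetrically with $w\to+\infty$ in the second, which together pin down the admissible balance of the two means and cancel the affine term. When both $\E[u(X)],\E[u(Y)]$ are finite this forces $\E[X]=\E[Y]$ and the supporting-line argument closes; when a mean is infinite one must exclude the configuration ``$\E[u(X)]$ finite while $\E[u(Y)]=-\infty$'', which I expect to be the genuinely delicate point, together with the sublinear case $L_-=0$ (where $\E[u(X)]$ can be finite with infinite mean, but the affine term is absent and the pointwise domination $\E[(X-t)_+]\le\E[(Y-t)_+]$ integrates directly against $\nu$) and the attendant Fubini and $\pm\infty$ bookkeeping.
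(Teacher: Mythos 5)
Your outline gets the easy directions right: \ref{item:cond-1}$\Rightarrow$\ref{item:cond-3} and the conjugacy \ref{item:cond-2}$\Leftrightarrow$\ref{item:cond-3} are handled essentially as in the paper, and your reduction via $u_+=\max(u,0)$ (a nonnegative convex function, hence amenable to Tonelli against $\nu$) to rule out ``$\E[u(X)]=+\infty$ while $\E[u(Y)]<\infty$'' is sound. But there is a genuine gap in \ref{item:cond-3}$\Rightarrow$\ref{item:cond-1}, and it sits exactly where the theorem's novelty lies: the infinite-mean cases. Your plan for the affine term is to extract generalized-mean relations from \eqref{eq:stoploss} and ``cancel'' $L_-\E[X]$ against $L_-\E[Y]$; this only makes sense when both means are finite (where the statement is classical), because when $\E[X]=\E[Y]=-\infty$, say, the cancellation is $\infty-\infty$ and the supporting-line argument does not close. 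Moreover the claim ``when both $\E[u(X)],\E[u(Y)]$ are finite this forces $\E[X]=\E[Y]$'' is false as stated: take $u(x)=x_+$ with $\E[X_-]=\E[Y_-]=\infty$, or $u$ constant. The configuration you yourself flag as ``genuinely delicate'' --- $u$ monotone, $\E[u(Y)]=-\infty$, $\E[u(X)]$ possibly finite, which occurs precisely when $Y$ has one infinite tail --- is not a corner case to be excluded by bookkeeping; it is the core of the theorem, and your proposal contains no argument for it. Likewise, your representation $u(x)=u(0)+L_-x+\int[(x-t)_+-(-t)_+]\,\nu(\d t)$ silently assumes $L_->-\infty$, and in the sublinear case $L_-=0$ with $\inf u=-\infty$ the integrand is not of one sign, so ``integrates directly against $\nu$ by Tonelli'' does not go through either.

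What is missing is the paper's two key devices. First, its Lemma~\ref{th:lemma}: if $\E[Y_+]=\infty$ (resp.\ $\E[Y_-]=\infty$), then every convex $u$ with an increasing (resp.\ decreasing) part yields $\E[u(Y)]\in\{+\infty,\text{undefined}\}$, so $X\le_{\rm cx}Y$ reduces to the one-sided order $X\le_{\rm dcx}Y$ (resp.\ $X\le_{\rm icx}Y$), and is automatic when both tails are infinite. This reduction disposes of your delicate configuration structurally rather than by direct estimation. Second, a truncation-plus-monotone-convergence argument: from the second family in \eqref{eq:stoploss} one gets $\E[((X\vee m)-w)_+]\le\E[((Y\vee m)-w)_+]$ for all $m$, hence $X\vee m\le_{\rm icx}Y\vee m$ by the classical $L^1$ characterization, and then $\E[f(X\vee m)]\to\E[f(X)]$, $\E[f(Y\vee m)]\to\E[f(Y)]$ as $m\to-\infty$ for increasing convex $f$ with well-defined expectations, which yields $X\le_{\rm icx}Y$ on $L^0$ (note that only the \emph{second} family in \eqref{eq:stoploss} survives truncation, which is why the one-sided reduction must come first). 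Without counterparts to these two steps, your proof covers only the classical finite-mean case and leaves the actual content of the theorem unproved.
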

We note that, regardless of whether $X$ and $Y$ have finite mean, the integrals and expectations in \eqref{eq:cond-ES} and \eqref{eq:stoploss} are always well-defined, as each of them integrates a function that is bounded from one side. 
The proof of 
Theorem~\ref{th:ES-based-condition} relies on Lemma~\ref{th:lemma} below, which gives a simplification of the convex order when one of the two random variables at comparison is outside $L^1$. Let us present a few more stochastic orders first.  

\begin{definition} Let $\mathcal{U}_{\rm st}$ (resp.~$\mathcal{U}_{\rm icx}$, $\mathcal{U}_{\rm dcx}$) be the set of all increasing (resp.~increasing convex, decreasing convex) real functions on $\R$. For random variables $X, Y\in L^0$ and $*\in\{\mathrm{st}, \mathrm{icx}, \mathrm{dcx}\}$, we write $X\leq_* Y$ if $\E[u(X)]\leq\E[u(Y)]$ for all $u\in\mathcal{U}_{*}$ such that the expectations are well-defined. 
\end{definition}

\begin{lemma}
	\label{th:lemma}
	For $X, Y\in L^0$, 
	\begin{enumerate}[label = \rm(\roman*), ref=(\roman*)]
		 \item \label{item:positive} if $\E[Y_+] = \infty$, then $X\leq_{\rm cx}Y$ is equivalent to $X\leq_{\rm dcx}Y$;  
		 \item \label{item:negative} if  $\E[Y_-] = \infty$, then $X\leq_{\rm cx}Y$ is equivalent to $X\leq_{\rm icx}Y$; 
		 \item \label{item:trivial} if $\E[Y_+] =\E[Y_-] = \infty$, then $Y\leq_{\rm cx}^\dagger X\leq_{\rm cx}Y$  always holds.  
	\end{enumerate}
\end{lemma}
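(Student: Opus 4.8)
The plan rests on a single growth estimate for convex functions, from which all three parts follow by careful bookkeeping on which expectations are well-defined. The estimate is: if $u\in\mathcal U_{\rm cx}$ is \emph{not} non-increasing, then its right derivative is strictly positive at some point $x_0$, and the supporting line $\ell(x)=u(x_0)+u'(x_0)(x-x_0)$ satisfies $u\ge\ell$ everywhere with slope $u'(x_0)>0$; hence $u(x)\ge\alpha x+\beta$ on $[0,\infty)$ for some $\alpha>0$ and $\beta\in\R$. Consequently, whenever $\E[Y_+]=\infty$ one gets $\E[u(Y)_+]=\infty$, since for $T$ large enough that $\alpha x+\beta>0$ on $\{x>T\}$ we have $\E[u(Y)_+]\ge\alpha\E[Y\id_{\{Y>T\}}]+\beta\p(Y>T)=\infty$. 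The mirror statement (replace ``non-increasing'' by ``non-decreasing'', use the left asymptotic slope, and $\E[Y_-]=\infty$) holds by the same argument on $(-\infty,0]$. I would isolate this observation first, as it is the only analytic input.

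For part \ref{item:positive}, the implication $X\le_{\rm cx}Y\Rightarrow X\le_{\rm dcx}Y$ is immediate from $\mathcal U_{\rm dcx}\subseteq\mathcal U_{\rm cx}$. For the converse, assume $X\le_{\rm dcx}Y$ and take any $u\in\mathcal U_{\rm cx}$ for which $\E[u(X)]$ and $\E[u(Y)]$ are well-defined. If $\E[u(Y)]=+\infty$, the inequality $\E[u(X)]\le\E[u(Y)]$ is automatic. Otherwise $\E[u(Y)]$ is well-defined and not $+\infty$, i.e.\ $\E[u(Y)_+]<\infty$; by the growth estimate together with $\E[Y_+]=\infty$, this forces $u$ to be non-increasing, so $u\in\mathcal U_{\rm dcx}$ and $\E[u(X)]\le\E[u(Y)]$ follows from the hypothesis. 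Hence $X\le_{\rm cx}Y$.

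Part \ref{item:negative} I would obtain from part \ref{item:positive} by reflection: replacing $(X,Y)$ by $(-X,-Y)$ sends $\mathcal U_{\rm cx}$ to itself via $u\mapsto u(-\,\cdot\,)$ and exchanges $\mathcal U_{\rm icx}$ with $\mathcal U_{\rm dcx}$, so $X\le_{\rm cx}Y\iff -X\le_{\rm cx}-Y$ and $X\le_{\rm icx}Y\iff -X\le_{\rm dcx}-Y$, while $\E[Y_-]=\E[(-Y)_+]=\infty$; part \ref{item:positive} then applies. For part \ref{item:trivial}, the key point is that a \emph{non-constant} convex function has strictly positive right asymptotic slope or strictly negative left asymptotic slope (otherwise its derivative vanishes identically and $u$ is constant). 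Hence, when $\E[Y_+]=\E[Y_-]=\infty$, one of the two forms of the growth estimate yields $\E[u(Y)_+]=\infty$ for every non-constant $u\in\mathcal U_{\rm cx}$. It follows that the only $u\in\mathcal U_{\rm cx}$ with $\E[u(Y)]$ finite are the constants, for which $\E[u(Y)]=\E[u(X)]$, giving $Y\le_{\rm cx}^\dagger X$; and for any $u$ with $\E[u(Y)]$ well-defined, $\E[u(Y)_+]=\infty$ forces $\E[u(Y)]=+\infty$, whence $\E[u(X)]\le\E[u(Y)]$ and $X\le_{\rm cx}Y$.

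The routine parts are the supporting-line inequality and the reflection bookkeeping; the one place demanding care is the consistent handling of the distinction between an expectation being \emph{well-defined} (one tail finite) and being \emph{finite}, since the whole argument turns on reading off $\E[u(Y)_+]=\infty$ and then invoking well-definedness to upgrade this to $\E[u(Y)]=+\infty$. I expect no genuine obstacle beyond making that dichotomy airtight in each of the three cases.
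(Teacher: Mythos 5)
Your proposal is correct and follows essentially the same route as the paper's proof: the paper's key observation is precisely your growth estimate (a convex $u$ that is not non-increasing dominates an affine function $z\mapsto az+b$ with $a>0$, so $\E[Y_+]=\infty$ forces $\E[u(Y)]$ to be $+\infty$ or undefined), after which only $\mathcal{U}_{\rm dcx}$ remains to check, with part \ref{item:negative} by symmetry and part \ref{item:trivial} by reducing to constant functions. Your write-up is simply a more explicit version of the same argument, with the well-defined versus finite dichotomy spelled out in each case.
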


By Lemma~\ref{th:lemma}, for random variables $X,Y\in L^0\setminus L^1$,  one may observe $X \geq_{\rm st} Y$ meanwhile $X \leq_{\rm cx} Y$. This situation cannot occur for $X,Y$ with finite means (unless $X\stackrel{\rm d}{=}Y$) since convex ordering then requires $\E[X] = \E[Y]$, from \eqref{eq:alter-cx}. 
 The convex order has been mainly designed and hitherto employed to compare the variability of distributions. On $L^1$, the variability may be vaguely interpreted as some form of deviation from the mean. 
 When the mean is infinite, for instance when $\E[Y]=\infty$, 
 this deviation is larger when the distribution is stochastically smaller, and hence $\le_{\rm cx}$ becomes $\le_{\rm dcx}$. Therefore, the interpretation of convex order as a comparative tool for deviation from the mean remains valid even on $L^0$, as we see in parts (i) and (ii) of  Lemma \ref{th:lemma}.
 

Each of
\eqref{eq:cond-ES} and \eqref{eq:stoploss} is sufficient, but not necessary, for $X\le_{\rm cx }^\dagger Y$ on $L^0$.   Example \ref{ex:different-def} illustrates one instance in which \eqref{eq:cond-ES}  and \eqref{eq:stoploss} fail  but $X\le_{\rm cx }^\dagger Y$ holds. The following theorem provides necessary conditions to establish $^{\dagger}$convex order in the setting of $L^0$. 
\begin{theorem}
\label{th:dagger-ES-based-condition}
For $X,Y\in L^0$, the following are equivalent:
\begin{enumerate}[label= \rm(\roman*), ref=(\roman*)]
\item \label{item:dagger-cond-1} $X\le_{\rm cx}^{\dagger} Y$;
\item \label{item:dagger-cond-2} 
 the inequalities in \eqref{eq:cond-ES} hold with the relaxation that $\infty\le x $ and $x \le-\infty$ are treated as true for all $x\in [-\infty,\infty]$;
\item \label{item:dagger-cond-3} the inequalities in \eqref{eq:stoploss} hold with the relaxation that $\infty\le x $ and $x \le-\infty$ are treated as true for all $x\in [-\infty,\infty]$.
\end{enumerate}
\end{theorem}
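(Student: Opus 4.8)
The plan is to prove the two equivalences \ref{item:dagger-cond-1}$\Leftrightarrow$\ref{item:dagger-cond-3} and \ref{item:dagger-cond-2}$\Leftrightarrow$\ref{item:dagger-cond-3} separately, the first by decomposing the test functions into monotone pieces and the second by convex duality. Throughout I use (as noted after Theorem~\ref{th:ES-based-condition}) that every integral and expectation in \eqref{eq:cond-ES} and \eqref{eq:stoploss} is well-defined in $[-\infty,\infty]$, so the relaxed inequalities make sense; the crucial point is the ``tail dictionary'' that an integrated quantile or a stop-loss quantity is infinite exactly when the corresponding tail of the underlying variable is, namely $\int_0^p F_X^{-1}(t)\,\d t=-\infty\Leftrightarrow\E[X_-]=\infty\Leftrightarrow\E[(X-w)_-]=\infty$ and $\int_p^1 F_X^{-1}(t)\,\d t=\infty\Leftrightarrow\E[X_+]=\infty\Leftrightarrow\E[(X-w)_+]=\infty$, for every $p\in(0,1)$ and $w\in\R$. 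This is what makes the relaxation conventions on the two sides correspond.

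For \ref{item:dagger-cond-1}$\Leftrightarrow$\ref{item:dagger-cond-3} the key step is to show that $\le_{\rm cx}^\dagger$ splits into two one-sided ``dagger'' orders. Writing any $u\in\mathcal U_{\rm cx}$ that is bounded below as $u=c+g+h$ with $c=\inf u$, $g\in\mathcal U_{\rm icx}$ vanishing to the left of a minimizer and $h\in\mathcal U_{\rm dcx}$ vanishing to its right, one has $g,h\ge0$; since then $\E[u(X)]$ is finite iff $\E[g(X)]$ and $\E[h(X)]$ are both finite (and likewise for $Y$), the comparison $\E[u(X)]\le\E[u(Y)]$ follows by adding the two one-sided comparisons. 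Functions $u$ unbounded below are themselves monotone and are handled by one side alone. Hence $X\le_{\rm cx}^\dagger Y$ is equivalent to the conjunction of: $\E[g(X)]\le\E[g(Y)]$ for all $g\in\mathcal U_{\rm icx}$ with both expectations finite, and the symmetric statement over $\mathcal U_{\rm dcx}$.

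It then remains to identify the increasing-convex statement with the relaxed family $\E[(X-w)_+]\le\E[(Y-w)_+]$ (for all $w$) of \eqref{eq:stoploss}, and symmetrically on the decreasing side. When $\E[X_+]<\infty$, I would represent a bounded-below $g\in\mathcal U_{\rm icx}$ as $g(x)=g(-\infty)+\int_\R(x-t)_+\,\mu_g(\d t)$ with $\mu_g\ge0$, so that Tonelli gives $\E[g(X)]=g(-\infty)+\int_\R\E[(X-t)_+]\,\mu_g(\d t)$; the stop-loss inequalities integrate up to $\E[g(X)]\le\E[g(Y)]$, and the choice $g=(\cdot-w)_+$ recovers them, yielding the equivalence (the unbounded-below case reduces to this one by a monotone truncation and monotone convergence). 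When $\E[X_+]=\infty$, the relaxation makes the stop-loss inequalities automatically true, while any non-constant $g\in\mathcal U_{\rm icx}$ has $\E[g(X)]=\infty$ because its eventual positive slope forces $\E[g(X)]\ge\epsilon\,\E[(X-a)_+]=\infty$; thus only constants are admissible and both sides hold vacuously. For \ref{item:dagger-cond-2}$\Leftrightarrow$\ref{item:dagger-cond-3}, I would use the conjugate identities, valid in the extended reals, $\int_p^1 F_X^{-1}(t)\,\d t=\inf_{w\in\R}\{\E[(X-w)_+]+(1-p)w\}$ and $\int_0^p F_X^{-1}(t)\,\d t=\sup_{w\in\R}\{pw-\E[(X-w)_-]\}$ together with their inverses; monotonicity of the $\inf$/$\sup$ converts each family of stop-loss inequalities into the corresponding family of integrated-quantile inequalities and back, and by the tail dictionary above the vacuous cases match.

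The main obstacle is the bookkeeping of infinities rather than any single estimate. The non-negative decomposition $u=c+g+h$ is what makes it work: it removes the affine part of $u$, whose expectation could be an undefined $\infty-\infty$, and it is exactly the device that lets the ``both expectations finite'' clause of $\le_{\rm cx}^\dagger$ decouple across the two tails. The remaining care is to confirm that unbounded-below (hence monotone) test functions are captured, via truncation, and that the $\pm\infty$ conventions of \eqref{eq:cond-ES} and \eqref{eq:stoploss} correspond under the duality, which is precisely the content of the tail dictionary stated above. As a consistency check, the three regimes $\E[X_+]=\infty$, $\E[X_-]=\infty$, and both are those appearing in Lemma~\ref{th:lemma}, and in the doubly-infinite regime all of \ref{item:dagger-cond-1}--\ref{item:dagger-cond-3} hold vacuously, matching the fact that $X\le_{\rm cx}^\dagger Y$ is then automatic.
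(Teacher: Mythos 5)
Your proposal is correct, but it takes a genuinely different route from the paper's proof, most visibly in the key direction \ref{item:dagger-cond-3}$\Rightarrow$\ref{item:dagger-cond-1}. The paper argues by a case analysis on the tails of the random variables: the $L^1$ case is dispatched by Proposition~\ref{prop:equivalent-def}; when, say, $\E[X_-]=\infty$, every non-constant $u\in\mathcal{U}_{\rm dcx}$ (indeed every non-monotone convex $u$) fails to have finite $\E[u(X)]$ and is thus excluded from the dagger comparison, so only $u\in\mathcal{U}_{\rm icx}$ remains, which is handled by the truncation-and-monotone-convergence argument from the proof of Theorem~\ref{th:ES-based-condition} built on the known $L^1$ criterion \eqref{eq:cond-icx}; the case of $\E[X]$ undefined is Lemma~\ref{th:lemma}\ref{item:trivial}; and the equivalence \ref{item:dagger-cond-2}$\Leftrightarrow$\ref{item:dagger-cond-3} is simply declared straightforward. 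You instead split each admissible test function, $u=c+g+h$ with $g,h\ge 0$ monotone convex, so that finiteness of $\E[u(X)]$ decouples into finiteness of $\E[g(X)]$ and $\E[h(X)]$; this reduces $\le_{\rm cx}^{\dagger}$ to two one-sided dagger orders uniformly, with no case analysis on the random variables, and you then prove the one-sided equivalence from scratch via the mixture representation $g(x)=g(-\infty)+\int_\R(x-t)_+\,\mu_g(\d t)$ and Tonelli, and \ref{item:dagger-cond-2}$\Leftrightarrow$\ref{item:dagger-cond-3} via the explicit Rockafellar--Uryasev-type conjugacy. What each buys: your argument is more self-contained (it does not lean on Proposition~\ref{prop:equivalent-def}, Lemma~\ref{th:lemma}, or the internals of Theorem~\ref{th:ES-based-condition}'s proof) and makes transparent exactly why the two tails decouple under the ``both finite'' clause; the paper's argument is shorter because it reuses machinery already in place earlier in the paper.

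Two small patches you should make, neither of which is a genuine gap. First, a convex function that is bounded below but attains no minimum (e.g., $x\mapsto e^{-x}$) does not admit your decomposition literally; but any convex function without a minimizer is monotone, so it belongs to your ``handled by one side alone'' case --- state the dichotomy as monotone versus possessing a minimizer, rather than unbounded below versus bounded below. Second, when $g\in\mathcal{U}_{\rm icx}$ is non-constant, unbounded below, and $\E[X_+]=\infty$ with $\E[X_-]=\infty$ as well, the conclusion is $\E[g(X)_+]=\infty$, so that $\E[g(X)]$ is $+\infty$ or undefined --- in either case not finite, hence $g$ is excluded from the dagger comparison --- rather than literally $\E[g(X)]=\infty$.
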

Comparing Theorems \ref{th:ES-based-condition} and \ref{th:dagger-ES-based-condition}, 
we can get the conclusion in
Proposition \ref{prop:equivalent-def} that $\le_{\rm cx}$ and $\le_{\rm cx}^\dagger$ are equivalent on $L^1$ but not on $L^0$.

The conditions' relaxation in items \ref{item:dagger-cond-2} and \ref{item:dagger-cond-3} may be deemed unnatural, and it gives rise to peculiar cases where a positive random variable may be comparable to a negative random variable in $^{\dagger}$convex order, as in Example~\ref{ex:different-def}. 
To exclude these incongruent cases, one might prefer using $\leq_{\rm cx}$ over $\leq_{\rm cx}^{\dagger}$ in the setting of $L^0$. In other words, when the means of $X$ and $Y$ are infinite, the $^\dagger$convex order is not very well-suited to establish comparisons as its requirement on the finiteness of $\E[u(X)]$ and $\E[u(Y)]$ excludes too many $u\in\mathcal{U}_{\rm cx}$ for the comparison $X\leq_{\rm cx}^{\dagger} Y$ to be practically useful.

 Theorems \ref{th:ES-based-condition} and \ref{th:dagger-ES-based-condition}
  also   imply that both $\le_{\rm cx} $ and
  $\le_{\rm cx}^\dagger $
 are closed under distributional mixtures on $L^0$. Note that this does not directly follow from the definition (whereas the case on $L^\infty$ follows directly by definition), because the functions $u\in \mathcal U_{\rm cx}$ that needs to be checked change when distributions are mixed.

\section{Convex order between sums of random variables}
\label{sect:CXsum-explanations}

We first define a few classic concepts of dependence that are relevant for convex order relations. 
 A pair of random variables $(X,Y)$ is \emph{comonotonic} 
 if $X=f(Z)$ and $Y=g(Z)$ almost surely for some random variable $Z$ and increasing functions $f$ and $g$. 
     A pair of random variables $(X,Y)$ is \emph{counter-monotonic} 
 if $(X,-Y)$ is comonotonic.  
	Comonotonicity and counter-monotonicity for random vectors  are meant pairwise. For a random vector $(X_1,\dots,X_d)$, we say that $(X_1',\dots,X_d')$ is its \emph{comonotonic} (resp.~\emph{counter-monotonic}) \emph{version}
if $(X_1',\dots,X_d')$ is comonotonic (resp.~counter-monotonic) and $X_i\laweq X_i'$ for each $i\in [d]=\{1,\ldots,d\}$.
For any given random vector, 
its comonotonic version 
always exists, whereas its counter-monotonic version may not exist for $d\ge 3$ (\cite{D72}).

As mentioned in the Introduction, comonotonicity and 
  counter-monotonicity are connected to convex order  on $L^1$ through $ X^{\rm ct}+Y^{\rm ct} \le _{\rm cx} X+Y \le _{\rm cx} X^{\rm co}+Y^{\rm co}$.
  Our next result shows that this connection continues to hold on $L^0$. 

\begin{theorem}
\label{th:CX}
For any  $X,Y\in L^0$, we have 
 \begin{align}
 \label{eq:main}
 X^{\rm ct}+Y^{\rm ct} \le _{\rm cx} X+Y \le _{\rm cx} X^{\rm co}+Y^{\rm co},
\end{align} 
where $(X^{\rm co},Y^{\rm co})$ is a comonotonic version of $(X,Y)$
and 
$(X^{\rm ct},Y^{\rm ct})$ is a counter-monotonic version of $(X,Y)$. 
\end{theorem}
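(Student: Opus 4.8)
The plan is to verify both relations in \eqref{eq:main} through the stop-loss characterization of Theorem~\ref{th:ES-based-condition}\ref{item:cond-3}. Write $S=X+Y$, $S^{\rm co}=X^{\rm co}+Y^{\rm co}$, and $S^{\rm ct}=X^{\rm ct}+Y^{\rm ct}$. Then $S\le_{\rm cx}S^{\rm co}$ is equivalent to $\E[(S-w)_+]\le\E[(S^{\rm co}-w)_+]$ and $\E[(S-w)_-]\le\E[(S^{\rm co}-w)_-]$ for all $w\in\R$, while $S^{\rm ct}\le_{\rm cx}S$ is equivalent to the same inequalities with $(S^{\rm ct},S)$ in place of $(S,S^{\rm co})$. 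The point of routing through \eqref{eq:stoploss} rather than through the means is that every expectation here integrates a nonnegative function, hence is well-defined in $[0,\infty]$; we thereby sidestep the $\infty-\infty$ ambiguities that break mean-based arguments outside $L^1$.

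Next I would reduce the ``$-$'' inequalities to the ``$+$'' ones. Since $(s-w)_-=((-s)-(-w))_+$, and since the comonotonic and counter-monotonic versions of $(-X,-Y)$ are exactly $(-X^{\rm co},-Y^{\rm co})$ and $(-X^{\rm ct},-Y^{\rm ct})$, applying the ``$+$'' inequalities to $(-X,-Y)$ at level $-w$ returns the ``$-$'' inequalities for $(X,Y)$ at level $w$. It therefore suffices to prove, for all $w\in\R$, that $\E[(S-w)_+]\le\E[(S^{\rm co}-w)_+]$ and $\E[(S^{\rm ct}-w)_+]\le\E[(S-w)_+]$.

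The core step reduces these to the bounded case, where \eqref{eq:main} is the classical result recalled in \eqref{eq:main-intro}. For $n,m\ge 1$ set $\tau_{n,m}(x)=(x\wedge n)\vee(-m)$, an increasing map. Being a common increasing transform, $\tau_{n,m}$ preserves marginal laws and preserves both comonotonicity and counter-monotonicity, so $(\tau_{n,m}(X^{\rm co}),\tau_{n,m}(Y^{\rm co}))$ and $(\tau_{n,m}(X^{\rm ct}),\tau_{n,m}(Y^{\rm ct}))$ are respectively comonotonic and counter-monotonic versions of the bounded vector $(\tau_{n,m}(X),\tau_{n,m}(Y))$. Writing $S_{n,m}=\tau_{n,m}(X)+\tau_{n,m}(Y)$ and likewise $S^{\rm co}_{n,m},S^{\rm ct}_{n,m}$, the bounded case yields $\E[(S^{\rm ct}_{n,m}-w)_+]\le\E[(S_{n,m}-w)_+]\le\E[(S^{\rm co}_{n,m}-w)_+]$ for all $n,m,w$. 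I would then pass to the limit monotonically in a nested order: letting $m\to\infty$ with $n$ fixed, every truncated sum decreases to its $(\,\cdot\,\wedge n)$-version, so each $(\,\cdot\,-w)_+$ decreases and, being dominated by the bounded $m=1$ term, its expectation descends by monotone convergence, which removes $m$; letting then $n\to\infty$, each $(\,\cdot\,\wedge n)$-sum increases to the untruncated sum, so each $(\,\cdot\,-w)_+$ increases and its expectation ascends by monotone convergence. Both limits preserve the inequalities, giving $\E[(S^{\rm ct}-w)_+]\le\E[(S-w)_+]\le\E[(S^{\rm co}-w)_+]$; together with the ``$-$'' counterparts from the previous step, Theorem~\ref{th:ES-based-condition}\ref{item:cond-3} concludes.

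The main obstacle is exactly this passage to the limit in $L^0$. A single symmetric truncation $\tau_{k,k}$ fails, because $\tau_{k,k}(X)+\tau_{k,k}(Y)\to S$ is not monotone and need not admit an integrable envelope: the positive and negative parts of $X$ and $Y$ may both have infinite mean yet cancel in $S$, so dominated convergence is unavailable. The nested limits circumvent this because $(\,\cdot\,-w)_+$ is insensitive to large negative values: one may first send the lower truncation level $-m$ to $-\infty$ by decreasing monotone convergence controlled by a bounded top, and only afterwards send the upper truncation level $n$ to $+\infty$ by increasing monotone convergence, which needs no integrability at all. The only bookkeeping is to align the order of truncation with the sign of the stop-loss function, which is why collapsing the ``$-$'' inequalities onto the ``$+$'' ones at the outset is convenient.
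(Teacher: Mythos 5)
Your proof is correct, and it takes a genuinely different route from the paper's. The paper works directly with the definition of $\le_{\rm cx}$: it truncates symmetrically at $\pm m$, invokes the $L^\infty$ case (Lemma~\ref{lem:thL1}), and passes to the limit for a general convex $u$ by decomposing $u(s)=u(s_+)+u(-s_-)-u(0)$ (after shifting $u$ so that its monotonicity change occurs at $0$) and using the observation, due to \cite{S77}, that the positive and negative parts of the symmetrically truncated sum are each increasing in $m$ --- precisely because the lower truncation is invisible to the positive part of the sum. You instead reduce the whole statement to the stop-loss inequalities \eqref{eq:stoploss} via Theorem~\ref{th:ES-based-condition}\ref{item:cond-3}, dispose of the ``$-$'' inequalities by the sign-flip symmetry (negation of both components preserves comonotonicity, counter-monotonicity, and the marginal identifications), and obtain the ``$+$'' inequalities through a two-parameter truncation with nested monotone limits: $m\to\infty$ under the bounded envelope supplied by the upper truncation at $n$, then $n\to\infty$ by plain monotone convergence. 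Your nested truncation plays exactly the role of Simons' observation in the paper: both devices repair the failure of monotonicity (and the absence of an integrable envelope) for the single symmetric truncation, a failure you correctly identify as the crux. What your route buys is that only the elementary test functions $x\mapsto(x-w)_+$ and $x\mapsto(x-w)_-$ ever need to be handled, so no decomposition or shift argument for general convex $u$ is required; the price is reliance on the nontrivial equivalence of Theorem~\ref{th:ES-based-condition}, whereas the paper's proof of Theorem~\ref{th:CX} is self-contained modulo Lemma~\ref{lem:thL1}. There is no circularity in your usage, since the paper proves Theorem~\ref{th:ES-based-condition} without appeal to Theorem~\ref{th:CX}.
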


Since $Z\le_{\rm cx}W\implies Z\le_{\rm cx}^{\dagger}W$, 
the result of Theorem~\ref{th:CX} is also valid for $\le_{\rm cx}^{\dagger}$. 
 Another implication of Theorem~\ref{th:CX} is the following corollary, previously shown by \cite{S77}.

 \begin{corollary}[\cite{S77}] \label{coro:Simons} Consider $X,Y,Z, W\in L^0$ and suppose $X\laweq Z$ and $Y\laweq W$. If $\E[X+Y]$ and $\E[Z+W]$ are both well-defined, then $\E[X+Y]=\E[Z+W]$.
 \end{corollary}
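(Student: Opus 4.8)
The plan is to use only the lower (counter-monotonic) bound of Theorem~\ref{th:CX} as a common reference point for the two sums, and to avoid the comonotonic upper bound entirely. The reason is that the comonotonic sum is of no help here: when the marginals have heavy tails on both sides, $X^{\rm co}+Y^{\rm co}$ has an $\infty-\infty$ mean and the relation $X+Y\le_{\rm cx}X^{\rm co}+Y^{\rm co}$ becomes vacuous (by Lemma~\ref{th:lemma}\ref{item:trivial}, anything sits below a doubly-heavy-tailed variable in $\le_{\rm cx}$). The counter-monotonic sum, being the most cancelled, is the object that will carry a well-defined mean.

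First I would record the structural reduction. Since $X\laweq Z$ and $Y\laweq W$, the pairs $(X,Y)$ and $(Z,W)$ share the same counter-monotonic version in distribution; for a pair (the $d=2$ case) this version always exists, and its sum $S^{\rm ct}:=X^{\rm ct}+Y^{\rm ct}\laweq Z^{\rm ct}+W^{\rm ct}$ depends only on the two marginals. Because $\le_{\rm cx}$ depends only on distributions, Theorem~\ref{th:CX} then gives $S^{\rm ct}\le_{\rm cx}X+Y$ and $S^{\rm ct}\le_{\rm cx}Z+W$ with the same $S^{\rm ct}$. The second ingredient I would isolate is a mean-preservation principle: if $A\le_{\rm cx}B$ and both $\E[A]$ and $\E[B]$ are well-defined, then $\E[A]=\E[B]$. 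This follows by feeding $u(x)=x$ and $u(x)=-x$ into Definition~\ref{def:cx-A}\ref{item:cx-defA}; one checks it survives when the common value is $\pm\infty$, since $\E[\pm A]$ and $\E[\pm B]$ remain well-defined in that case.

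The crux is then to secure that the reference mean $\E[S^{\rm ct}]$ is itself well-defined. Because $\E[X+Y]$ is well-defined by hypothesis, at least one of $\E[(X+Y)_+]$ and $\E[(X+Y)_-]$ is finite. Applying the stop-loss characterization of Theorem~\ref{th:ES-based-condition}\ref{item:cond-3} to $S^{\rm ct}\le_{\rm cx}X+Y$ at $w=0$ yields $\E[(S^{\rm ct})_+]\le\E[(X+Y)_+]$ and $\E[(S^{\rm ct})_-]\le\E[(X+Y)_-]$; hence the corresponding one-sided expectation of $S^{\rm ct}$ is finite, so $\E[S^{\rm ct}]$ is well-defined. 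With this in hand, the mean-preservation principle applied to $S^{\rm ct}\le_{\rm cx}X+Y$ gives $\E[S^{\rm ct}]=\E[X+Y]$, and applied to $S^{\rm ct}\le_{\rm cx}Z+W$ (whose mean is well-defined by hypothesis) gives $\E[S^{\rm ct}]=\E[Z+W]$; chaining the two equalities proves $\E[X+Y]=\E[Z+W]$.

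I expect the main obstacle to be precisely the well-definedness of the reference mean $\E[S^{\rm ct}]$, and the temptation to route the argument through the comonotonic sum. The point I would emphasize is that the transfer of integrability goes in the right direction only along the lower bound: a finite one-sided expectation of $X+Y$ forces the same side of $S^{\rm ct}$ to be finite, whereas the comonotonic sum may genuinely have an undefined mean and cannot be used. Once the lower bound is chosen, the rest is a short two-line application of mean preservation, so the entire proof reduces to this single observation.
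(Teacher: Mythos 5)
Your proposal is correct and takes essentially the same route as the paper's own proof: both use the counter-monotonic lower bound of Theorem~\ref{th:CX} as the common reference sum (well-defined in distribution since it depends only on the marginals), transfer well-definedness of its mean from $\E[X+Y]$ via the stop-loss inequalities \eqref{eq:stoploss} at $w=0$, and conclude by applying the convex functions $x\mapsto x$ and $x\mapsto -x$ to both relations $X^{\rm ct}+Y^{\rm ct}\le_{\rm cx}X+Y$ and $X^{\rm ct}+Y^{\rm ct}\le_{\rm cx}Z+W$. Your additional remark on why the comonotonic upper bound is unusable (its mean may be undefined) is sound motivation but does not change the argument, which coincides with the paper's.
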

 \begin{proof}
Note that the convex order relation $X^{\rm ct}+Y^{\rm ct}\le_{\rm cx} X+Y$ from Theorem \ref{th:CX} implies $\E[(X^{\rm ct}+Y^{\rm ct})_+] \le \E[(X +Y )_+]$
and $\E[(X^{\rm ct}+Y^{\rm ct})_-] \le \E[(X +Y )_-]$ by \eqref{eq:stoploss}.
Hence, $\E[X^{\rm ct}+Y^{\rm ct}]$ is well-defined  because $\E[X+Y]$  is well-defined.
With this  and the relations $X^{\rm ct}+Y^{\rm ct}\le_{\rm cx} X+Y$ and  $X^{\rm ct}+Y^{\rm ct}\le_{\rm cx} Z+W
$, we get $\E[X^{\rm ct}+Y^{\rm ct}]=\E[X+Y]=\E[Z+W]$ since both $x\mapsto x$ and $x\mapsto -x$ are convex. 
 \end{proof}
 Note that the proof of Corollary~\ref{coro:Simons} requires the ordering in \eqref{eq:main} for $\leq_{\rm cx}$ specifically (rather than for $\leq_{\rm cx}^{\dagger}$), as we compare possibly infinite expectations.
 While Corollary~\ref{coro:Simons} appears evident on $L^1$, it is not trivial on $L^0$ since the relation $\mathbb{E}[X+Y]=\mathbb{E}[X]+\mathbb{E}[Y]$ may not hold because one random variable's mean is $\infty$ and the other's is $-\infty$ or because at least one has an undefined mean. 
 The noteworthiness of the result hence lies in that two random variables' infinite means cannot offset each other in many different ways so that their sum's mean varies upon the dependence scheme. This is curiously not the case when summing three or more random variables, as exhibited in Example~\ref{ex:SIMONS} and discussed by \cite{S77}. 
The proof techniques of \cite{S77} are similar to the ones of Theorem \ref{th:CX}, and in fact they inspired our proof. 

We next compare sums of comonotonic random variables. This relation on $L^1,$ stated for $d \ge 2$ random variables, was presented in  \citet[Corollary 1]{DDGKV02}.

\begin{theorem}
 \label{th:cx-cx}
     For   $X,Y, Z,W\in L^0$,
   if 
 $X\leq_{\rm cx} Z$ and $Y\leq_{\rm cx} W$, then  
     $$
X+Y\le_{\rm cx }Z^{\rm co} + W^{\rm co},
$$
where $(Z^{\rm co},W^{\rm co})$ is a comonotonic version of $(Z,W)$. 
 \end{theorem}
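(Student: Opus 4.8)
The plan is to factor the desired inequality through the comonotonic coupling of $(X,Y)$ and then reduce everything to a single comparison of \emph{comonotonic sums}, which can be handled at the level of quantile functions. Let $(X^{\rm co},Y^{\rm co})$ be a comonotonic version of $(X,Y)$. Theorem~\ref{th:CX} supplies the upper bound $X+Y\le_{\rm cx}X^{\rm co}+Y^{\rm co}$, so by transitivity of $\le_{\rm cx}$ on $L^0$ (Proposition~\ref{prop:transitivity}) it suffices to establish $X^{\rm co}+Y^{\rm co}\le_{\rm cx}Z^{\rm co}+W^{\rm co}$. The advantage of this reformulation is that both sides are comonotonic sums, whose laws are completely described by sums of quantile functions, which is exactly the form addressed by the integral conditions of Theorem~\ref{th:ES-based-condition}.

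For the reduced comparison I would first record the additivity of quantile functions under comonotonicity: realizing $X^{\rm co}=F_X^{-1}(U)$ and $Y^{\rm co}=F_Y^{-1}(U)$ for a common uniform $U$, the sum $X^{\rm co}+Y^{\rm co}$ has left quantile function $F_X^{-1}+F_Y^{-1}$ almost everywhere on $(0,1)$, and likewise $Z^{\rm co}+W^{\rm co}$ has quantile $F_Z^{-1}+F_W^{-1}$. I would then invoke the equivalence \ref{item:cond-1}$\iff$\ref{item:cond-2} of Theorem~\ref{th:ES-based-condition}: the hypotheses $X\le_{\rm cx}Z$ and $Y\le_{\rm cx}W$ translate into the inequalities $\int_0^p F_X^{-1}\ge\int_0^p F_Z^{-1}$ and $\int_p^1 F_X^{-1}\le\int_p^1 F_Z^{-1}$, together with their analogues for $(Y,W)$, valid for every $p\in(0,1)$. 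Adding the two lower-tail inequalities, adding the two upper-tail inequalities, and substituting the quantile additivity above yields precisely condition \eqref{eq:cond-ES} for the pair $(X^{\rm co}+Y^{\rm co},\,Z^{\rm co}+W^{\rm co})$, whence $X^{\rm co}+Y^{\rm co}\le_{\rm cx}Z^{\rm co}+W^{\rm co}$ by Theorem~\ref{th:ES-based-condition} in the reverse direction. Chaining with the bound from Theorem~\ref{th:CX} completes the proof.

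The step requiring genuine care, and the one I expect to be the main obstacle, is the legitimacy of \emph{adding} the integral inequalities once means are allowed to be infinite. Monotonicity of quantile functions is what rescues the argument: on $(0,p)$ each of $F_X^{-1}$ and $F_Y^{-1}$ is bounded above by its value at $p$, so $\int_0^p F_X^{-1}$ and $\int_0^p F_Y^{-1}$ both lie in $[-\infty,\infty)$ and their sum never creates an $\infty-\infty$ indeterminacy; symmetrically, on $(p,1)$ the corresponding integrals lie in $(-\infty,\infty]$. Hence addition of these $[-\infty,\infty]$-valued inequalities is always well-posed, in line with the remark following Theorem~\ref{th:ES-based-condition}. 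The only other technical point is to justify the quantile additivity $F^{-1}_{X^{\rm co}+Y^{\rm co}}=F_X^{-1}+F_Y^{-1}$ without a finite-mean assumption; this follows because the sum of two increasing left-continuous functions composed with a uniform variable is again the quantile function of the resulting law, the identity holding at every continuity point and therefore almost everywhere, which is all that the integral conditions \eqref{eq:cond-ES} require.
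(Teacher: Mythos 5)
Your proposal is correct and follows essentially the same route as the paper's own proof: reduce to the case of comonotonic $(X,Y)$ via Theorem~\ref{th:CX} (and transitivity), then conclude from the quantile additivity $F_{X^{\rm co}+Y^{\rm co}}^{-1}=F_X^{-1}+F_Y^{-1}$ together with the integral condition \eqref{eq:cond-ES} of Theorem~\ref{th:ES-based-condition}. The only difference is that you spell out the details the paper leaves implicit (well-posedness of adding one-sided-infinite integrals and the validity of quantile additivity without finite means), which is a welcome elaboration rather than a deviation.
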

\begin{proof}
Using Theorem \ref{th:CX}, it suffices to prove the result when $(X,Y)$ is comonotonic. 
    This result follows directly by using the equivalent condition \eqref{eq:cond-ES} in Theorem~\ref{th:ES-based-condition} and the relation $F_{X+Y}^{-1}(t) = F_{X}^{-1}(t)+F_{Y}^{-1}(t)$, $t\in(0,1)$, for comonotonic random variables. 
\end{proof}
\begin{remark}
    The same conclusion in Theorem \ref{th:cx-cx} holds also for $\le_{\rm cx}^\dagger$ following a similar proof, which we omit. 
\end{remark}


A potential counterpart to Theorem~\ref{th:cx-cx}, namely $X^{\rm ct}+Y^{\rm ct} \le_{\rm cx} Z+W$ under the same condition,  fails to hold even for random variables in $L^\infty$; see
 \citet[Example 1]{CDLT14}.
An intuitive explanation is that $Z+W$ may be a constant, but $X^{\rm ct}+Y^{\rm ct} $ is not necessarily a constant under the given conditions. 
 
Using Theorem~\ref{th:cx-cx}, we can immediately show that the upper bound in \eqref{eq:main} can be generalized to an arbitrary dimension $d$ on $L^0$. 
   \begin{theorem}
\label{th:CXm} Let $d\in\mathbb{N}$. 
For any  $X_1,\dots,X_d\in L^0$, we have 
 \begin{align}
 \label{eq:main-m}
	X_1 + \cdots +X_d \leq_{\rm cx} X_1^{\rm co} + \cdots + X_d^{\rm co}, 
\end{align} 
where $(X_1^{\rm co},\dots,X_d^{\rm co})$ is a comonotonic version of $(X_1,\dots,X_d)$.
\end{theorem}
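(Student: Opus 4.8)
The plan is to induct on $d$, using Theorem~\ref{th:cx-cx} to pass from dimension $d-1$ to dimension $d$. The cases $d=1$ and $d=2$ are immediate: for $d=1$ the two sides are identically distributed, so \eqref{eq:main-m} holds trivially, and $d=2$ is exactly the upper bound in Theorem~\ref{th:CX} (equivalently, a direct instance of Theorem~\ref{th:cx-cx} with $Z=X_1$, $W=X_2$, since $X_i\le_{\rm cx}X_i$).

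For the inductive step, assume \eqref{eq:main-m} holds in dimension $d-1$, and write $S = X_1 + \cdots + X_{d-1}$ and $S^{\rm co} = X_1^{\rm co} + \cdots + X_{d-1}^{\rm co}$, where $(X_1^{\rm co}, \dots, X_{d-1}^{\rm co})$ is a comonotonic version of $(X_1, \dots, X_{d-1})$. The induction hypothesis gives $S \le_{\rm cx} S^{\rm co}$, and trivially $X_d \le_{\rm cx} X_d$. Applying Theorem~\ref{th:cx-cx} to the pairs $(S, X_d)$ and $(S^{\rm co}, X_d)$ then yields
$$ X_1 + \cdots + X_d = S + X_d \le_{\rm cx} \widetilde{S} + \widetilde{X}_d, $$
where $(\widetilde{S}, \widetilde{X}_d)$ is a comonotonic version of $(S^{\rm co}, X_d)$.

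It then remains to identify the law of $\widetilde{S} + \widetilde{X}_d$ with that of $X_1^{\rm co} + \cdots + X_d^{\rm co}$. Here I would invoke the additivity of quantile functions under comonotonicity (already used in the proof of Theorem~\ref{th:cx-cx}): since $S^{\rm co}$ is itself a comonotonic sum, $F_{S^{\rm co}}^{-1} = \sum_{i=1}^{d-1} F_{X_i}^{-1}$, and since $(\widetilde{S}, \widetilde{X}_d)$ is comonotonic, $F_{\widetilde{S} + \widetilde{X}_d}^{-1} = F_{S^{\rm co}}^{-1} + F_{X_d}^{-1} = \sum_{i=1}^{d} F_{X_i}^{-1} = F_{X_1^{\rm co} + \cdots + X_d^{\rm co}}^{-1}$, the final equality again by comonotonicity. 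Hence $\widetilde{S} + \widetilde{X}_d \laweq X_1^{\rm co} + \cdots + X_d^{\rm co}$; since $\le_{\rm cx}$ depends only on the laws of the variables compared and is transitive on $L^0$ (Proposition~\ref{prop:transitivity}), we conclude \eqref{eq:main-m}.

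The routine parts are the base case and the quantile bookkeeping; the only point demanding care is that the comonotonic version produced by Theorem~\ref{th:cx-cx} in the inductive step need not literally coincide with the prescribed version $X_1^{\rm co} + \cdots + X_d^{\rm co}$. This is harmless because the law of a comonotonic sum with fixed margins is uniquely determined, so \emph{every} comonotonic version of $(X_1,\dots,X_d)$ produces the same sum distribution, namely the one with quantile function $\sum_{i=1}^{d} F_{X_i}^{-1}$; the convex order statement is therefore insensitive to the choice of version. I expect no genuine obstacle beyond making this distributional identification explicit.
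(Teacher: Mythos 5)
Your proof is correct and follows essentially the same route as the paper, which proves the theorem by iterative application of Theorems~\ref{th:CX} and \ref{th:cx-cx} together with the fact that partial sums of a comonotonic vector's components are again comonotonic. Your explicit quantile-function bookkeeping, $F_{\widetilde{S}+\widetilde{X}_d}^{-1}=\sum_{i=1}^{d}F_{X_i}^{-1}$, is just a spelled-out version of that same distributional identification, so there is no substantive difference.
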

\begin{proof}Because sums of any number a comonotonic random vector's components are comonotonic, the result is obtained by combining iterative applications of Theorems~\ref{th:CX} and \ref{th:cx-cx}. 
\end{proof}



As mentioned in Section \ref{sect:conditions}, for $X,Y\in L^0\setminus L^1$, it is possible that  $X\leq_{\rm cx}Y$ meanwhile $Y\leq_{\rm st} X$.  
The implication of this phenomenon in risk management and decision making
is discussed in Section \ref{sec:r1}.

In the next result, we show that  the lower bound in \eqref{eq:main} can also be generalized to an arbitrary dimension $d$.
This result requires a separate proof because there is no counter-monotonic counterpart of Theorem \ref{th:cx-cx} (which is used to prove Theorem \ref{th:CXm}). 
When $d\ge 3$ and $X_1,\dots,X_d$ have at least $3$ non-degenerate components, this result follows from a more general conclusion in Theorem \ref{th:counter-SM}, addressed in the next section, and in the setting of $L^1$ it is given in \citet[Theorem 5.1]{CL14}.
\begin{theorem} 
\label{th:CXm-counter-monotonic}
Let $d\in\mathbb{N}$. For $X_1,\ldots,X_d\in L^0$, we have
\begin{equation*}
X_1^{\rm ct} + \cdots + X_d^{\rm ct} \le_{\rm cx} X_1 + \cdots + X_d,
\end{equation*}
if a counter-monotonic version $(X_{1}^{\rm ct}, \ldots,X_d^{\rm ct})$ of $(X_1,\ldots,X_d)$ exists.
\end{theorem}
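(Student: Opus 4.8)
The plan is to isolate the only genuinely multivariate difficulty --- the case $d\ge 3$ with three or more non-degenerate marginals --- and to reduce everything else to the bivariate lower bound of Theorem~\ref{th:CX}.

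First I would strip out degenerate coordinates. If $X_i$ is almost surely equal to a constant $c_i$, then necessarily $X_i^{\rm ct}=c_i$, so this coordinate adds the same constant $c_i$ to both $X_1+\dots+X_d$ and $X_1^{\rm ct}+\dots+X_d^{\rm ct}$; since $A\le_{\rm cx}B$ if and only if $A+c\le_{\rm cx}B+c$, I may delete all such coordinates. The remaining non-degenerate coordinates are still pairwise counter-monotonic with the correct marginals, hence form a counter-monotonic version of their own subvector. Writing $m$ for the number of non-degenerate coordinates, the cases $m\le 1$ are immediate (the two sums are equal in distribution, being a constant or a single summand $X_i^{\rm ct}\laweq X_i$), and $m=2$ is exactly the left inequality $X^{\rm ct}+Y^{\rm ct}\le_{\rm cx}X+Y$ of Theorem~\ref{th:CX}.

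This leaves $m\ge 3$, which is the crux. Here the pairwise-induction strategy used for the comonotonic bound in Theorem~\ref{th:CXm} is unavailable (there is no counter-monotonic analogue of Theorem~\ref{th:cx-cx}), so I would instead exploit the rigidity of counter-monotonicity in dimension at least three: such a vector must be \emph{mutually exclusive}, and up to the reflection $X\le_{\rm cx}Y\iff -X\le_{\rm cx}-Y$ we may take this structure to be built on finite lower floors, so that the excesses $Y_i:=X_i-\essinf X_i\ge 0$ have essentially disjoint supports under the counter-monotonic coupling. Consequently at most one $Y_i^{\rm ct}$ is positive at a time, whence $\sum_i Y_i^{\rm ct}=\max_i Y_i^{\rm ct}$ and $\p(\sum_i Y_i^{\rm ct}>s)=\sum_i\p(Y_i>s)$ for $s>0$. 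Shifting by the constant $\sum_i\essinf X_i$ reduces the claim to $\sum_i Y_i^{\rm ct}\le_{\rm cx}\sum_i Y_i$ for non-negative $Y_i$, which I would verify through the stop-loss characterization of Theorem~\ref{th:ES-based-condition}\ref{item:cond-3}: the upper inequality (for $w\ge 0$) follows from the pointwise bound $(\sum_i y_i-w)_+\ge\sum_i(y_i-w)_+$ after integrating the explicit tail above, and the lower inequality from $\min(\sum_i y_i,w)\le\sum_i\min(y_i,w)$.

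The main obstacle is precisely this $m\ge 3$ step, and its delicacy is the infinite-mean bookkeeping rather than the combinatorics. The saving feature is that the two stop-loss quantities in Theorem~\ref{th:ES-based-condition}\ref{item:cond-3} are always well-defined because each integrand is bounded on one side, and that for the remaining upper case $w<0$ one only needs equality of the (possibly infinite) means $\E[\sum_i Y_i^{\rm ct}]=\sum_i\E[Y_i]=\E[\sum_i Y_i]$, which holds in $[0,\infty]$ for non-negative summands irrespective of dependence. Thus no $\infty-\infty$ ever arises and the comparison goes through verbatim whether or not the $X_i$ have finite mean; this same case is also obtainable, at the level of the supermodular order, from Theorem~\ref{th:counter-SM} applied to the supermodular map $(x_1,\dots,x_m)\mapsto f(x_1+\dots+x_m)$.
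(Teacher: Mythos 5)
Your proposal is correct, and its skeleton coincides with the paper's: the paper likewise disposes of degenerate coordinates by constant shifts, handles $d\le 2$ (and at most two non-degenerate components) via Theorem~\ref{th:CX}, and isolates the case of at least three non-degenerate components. The difference lies in how that last case is closed. The paper simply cites Theorem~\ref{th:counter-SM} (the supermodular-order lower bound $\mathbf{X}^{\rm ct}\le_{\rm sm}\mathbf{X}$, proved later in the same section) together with the supermodularity of $(x_1,\dots,x_d)\mapsto u(x_1+\cdots+x_d)$ for $u\in\mathcal U_{\rm cx}$ --- the route you mention only in your closing sentence. You instead re-derive exactly the special case needed, staying inside convex-order machinery: Dall'Aglio's characterization forces finite essential infima and mutual exclusivity of the excesses $Y_i=X_i-\essinf X_i$, the counter-monotonic sum then satisfies $\p(\sum_i Y_i^{\rm ct}>s)=\sum_i\p(Y_i>s)$, and the two stop-loss inequalities of Theorem~\ref{th:ES-based-condition}\ref{item:cond-3} follow from the pointwise bounds $(\sum_i y_i-w)_+\ge\sum_i(y_i-w)_+$ and $\min(\sum_i y_i,w)\le\sum_i\min(y_i,w)$ for $y_i\ge 0$, $w\ge 0$, with equality under mutual exclusivity. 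These pointwise bounds are precisely the inequality $\varphi(\mathbf x)\ge\sum_i\varphi(x_i\mathbf e_i)$ used in the paper's proof of Theorem~\ref{th:counter-SM}, specialized to $\varphi(\mathbf x)=u(x_1+\cdots+x_d)-u(0)$ with $u$ a stop-loss function, so the two arguments share their structural core (both resting on \cite{D72}). What your version buys is self-containedness and very explicit infinite-mean bookkeeping (everything is non-negative, so no $\infty-\infty$ can arise and Tonelli applies freely); what the paper's version buys is brevity and generality, since Theorem~\ref{th:counter-SM} must be proved anyway and delivers the conclusion for all supermodular functions, not just those of additive form.
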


Let us repeat that $\leq_{\rm cx}$ implies $\leq_{\rm cx}^{\dagger}$, and hence, the statements of Theorems~\ref{th:CXm} and~\ref{th:CXm-counter-monotonic} also hold true for the $^\dagger$convex order. 
If the marginal distributions of $(X_1,\ldots,X_d)$ do not allow for a counter-monotonic version to exist, the set of
possible distributions of $X_1+\cdots+X_d$ with given marginal distributions of $X_1,\dots,X_d$ may possibly not admit a smallest element with respect to $\leq_{\rm cx}$ or $\leq_{\rm cx}^{\dagger}$, as discussed in Section 3.2 of \cite{bernard2014risk} with a discrete example.

\section{Optimal transport and supermodular order}
\label{sec:discuss}

Theorem~\ref{th:CX} can be interpreted as a result in OT with infinite mean. For given   distributions of $X,Y\in L^0$,  the classical Kantorovich OT problem is to find a joint distribution of $(X,Y)$, called a transport plan, that minimizes the expectation $\mathbb{E}[c(X,Y)]$ for a given real-valued cost function $c$ on $\mathbb{R}^2$.  
It is straightforward to check that functions of the form $(x,y)\mapsto u(x+y)$ are supermodular for $u\in\mathcal{U}_{\rm cx}$; see \citet[Theorem 3.1]{M97}. 
Hence,   for any cost function of the form $c:(x,y)\mapsto u(x+y)$ with $u\in\mathcal{U}_{\rm cx}$, the optimal transport plan is given by the distribution of $(X^{\rm ct},Y^{\rm ct})$. In the same vein, the distribution of $(X^{\rm co}, Y^{\rm co})$ is the optimal transport plan for functions of the form $c:(x,y)\mapsto u(x-y)$ with $u\in\mathcal{U}_{\rm cx}$. The quadratic optimal cost $(x-y)^2$ and the absolute value cost $|x-y|$ are examples of such functions. Theorem~\ref{th:CX} implies that the above conclusions hold even if $X$ and $Y$ do not have a mass barycenter. 
The statement of Theorem~\ref{th:CX} connects to a fundamental OT result for supermodular cost functions, as we discuss below. 
 
Recall that a function   $\varphi:\mathbb{R}^d\to \R$ is \textit{supermodular} if  
$$\varphi(\mathbf {x}) + \varphi(\mathbf {y}) \le \varphi(\mathbf {x}\wedge \mathbf {y}) + \varphi(\mathbf {x}\vee \mathbf {y}) \mbox{ for all }\mathbf {x}, \mathbf {y}\in\mathbb{R}^d,$$ where $\wedge$ represents taking the componentwise minimum and $\vee$, the componentwise maximum. We denote by $\Phi_{\rm sm}^d$ the set of all supermodular functions on $\R^d$.  

\begin{definition}
\label{def:sm-A}
For $\mathbf{X} , \mathbf{Y}  \in (L^0)^d$, $d\in\mathbb{N}$,  we say that $\mathbf {Y}$ dominates $\mathbf {X}$ 
\begin{enumerate}[label=(\roman*), ref=(\roman*)]
    \item \label{item:sm-defA} in \emph{supermodular order}, denoted by $\mathbf {X} \leq_{\rm sm} \mathbf {Y}$,
if
$\E[\varphi(\mathbf {X})]\leq \E[\varphi(\mathbf{Y})]$ 
 for all $\varphi\in {{\Phi}}^{d}_{\rm sm}$ such that the two expectations are well-defined; 
     \item \label{item:sm-defB} in \emph{$^\dagger\!$supermodular order}, denoted by $\mathbf {X} \leq_{\rm sm}^{\dagger} \mathbf {Y}$,
if
$\E[\varphi(\mathbf {X})]\leq \E[\varphi(\mathbf {Y})]$ 
 for all $\varphi\in {{\Phi}}^{d}_{\rm sm}$ such that the two expectations are finite; 
 \item \label{item:sm-defC} in \emph{$^\diamond\!$supermodular order}, denoted by $\mathbf {X} \leq_{\rm sm}^{\diamond} \mathbf {Y}$,
if
$\E[\varphi(\mathbf {X})]\leq \E[\varphi(\mathbf {Y})]$ 
 for all bounded and right-continuous $\varphi\in {{\Phi}}^{d}_{\rm sm}$;
 
 \item \label{item:conc} in \emph{concordance order}, denoted by $\mathbf {X}\le_{\rm c}\mathbf {Y}$, if $F_{\mathbf {X}}(\mathbf {x}) \le F_{\mathbf {Y}}(\mathbf {x})$ and $\overline{F}_{\mathbf {X}}(\mathbf {x}) \le \overline{F}_{\mathbf Y}(\mathbf {x})$ for all $\mathbf {x}\in\mathbb{R}^d$, where $F_{\mathbf{X}}$ and $F_{\mathbf{Y}}$ are the joint distribution functions of $\mathbf{X}$ and $\mathbf{Y}$, respectively, and $\overline{F}_{\mathbf{X}}$ and $\overline{F}_{\mathbf{Y}}$ are their joint survival functions.  
\end{enumerate}
\end{definition}

Note that there was no value in defining a ``$^{\diamond}$convex'' order in the previous sections because bounded convex functions on $\R$ are constant. In the $^\diamond$supermodular order, we required the right continuity of $\varphi$, so that it is consistent with \cite{tchen1980} and \cite{R06MT}. 


 \begin{remark}
The restriction \eqref{eq:bfrak} for integral stochastic orders in Section~\ref{sect:two-def} allows for a continuum of orders ranging from $\leq_{\rm sm}^{\diamond}$ to $\leq_{\rm sm}$, depending on the function $b_{\mathfrak{F}}$ circumscribing the set  ${\mathfrak{F}}$ considered in \eqref{eq:integralstoorder}. The domain embedding the orders varies accordingly. While the domain of $\leq_{\rm sm}^{\diamond}$ is indeed $(L^0)^d$, for $\leq_{\rm sm}$ the domain would reduce to random vectors following a discrete distribution with a finite numbers of atoms, which is inconvenient to work with. Since most authors usually define supermodular order or $^{\dagger}$supermodular order on larger spaces, such as $(L^{\infty})^d$, $(L^{1})^d$ or $(L^0)^d$, we will continue to do so in this section and discuss the problems that consequently arise. 
\end{remark}

For any choice of $w_1,\ldots,w_n\in \R$, functions $\mathbf {x}\mapsto\prod_{i=1}^d \id_{\{x_i\leq w_i\}}$ and $\mathbf {x}\mapsto\prod_{i=1}^d \id_{\{x_i > w_i\}}$ are supermodular, bounded and right-continuous. The expected values of these functions respectively yield the joint distribution function and the joint survival function and are evidently always finite. We therefore have the following implications 
\begin{equation}
    \mathbf {X}\le_{\rm sm}\mathbf {Y}\implies \mathbf {X}\le_{\rm sm}^{\dagger}\mathbf {Y}\implies \mathbf {X}\le_{\rm sm}^{\diamond}\mathbf {Y} \implies \mathbf {X}\le_{\rm c}\mathbf {Y} \quad \text{for any }\mathbf {X},\mathbf {Y}\in(L^0)^d, 
    \label{eq:implications-sm}
\end{equation}
since, as for the convex order, $\le_{\rm sm}^{\dagger}$ considers a smaller subset of functions than $\le_{\rm sm}$ within $\Phi^{d}_{\rm sm}$; idem for $\leq_{\rm sm}^{\diamond}$. 
Note that none of the reverse implications in \eqref{eq:implications-sm} are obvious to be true. Indeed, as we will see below, the reverse directions of the  second and the third implications do not hold in general. We are not aware of whether the reverse direction of the first implication in \eqref{eq:implications-sm} is true.

From the distribution functions and survival functions orderings in the definition of concordance, one immediately sees that $\mathbf {X}\leq_{c}\mathbf {Y}$ 
implies $F_{X_i}(x) = F_{Y_i}(x)$, for all $x\in\mathbb{R}$ and $i\in[d]$. 
Hence, by \eqref{eq:implications-sm}, all the orders in Definition~\ref{def:sm-A} require $\mathbf{X}$ and $\mathbf{Y}$ to have identical marginal distributions.

In the same vein as our discussions in Section~\ref{sect:CXsum-explanations} for the convex and $^\dagger$convex orders, we examine whether, for any random vectors, a comonotonic version and a counter-monotonic version provide an upper and a lower bound with respect to $\leq_{\rm sm}$, $\leq_{\rm sm}^{\dagger}$, $\leq_{\rm sm}^{\diamond}$ and $\leq_{\rm c}$. First, it is straightforward to check for $\leq_{\rm c}$, which follows directly from the Fr\'echet-Hoeffding bounds.


\begin{proposition}
\label{prop:conc}
   Let $d\geq 2$. For any random vector  $\mathbf{X} \in (L^0)^d$, we have $\mathbf{X} \leq_{\rm c} \mathbf{X}^{\rm co}$, where 
$\mathbf{X}^{\rm co}$ is a comonotonic version of $\mathbf{X}$.
Moreover, $\mathbf{X}^{\rm ct} \leq_{\rm c} \mathbf{X}$, where 
$\mathbf{X}^{\rm ct}$ is a counter-monotonic version of $\mathbf X$, if it exists. 
\end{proposition}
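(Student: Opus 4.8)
The plan is to derive both statements from the Fréchet--Hoeffding bounds. Recall that any $\mathbf Z=(Z_1,\dots,Z_d)\in(L^0)^d$ with marginal distribution functions $F_{Z_1},\dots,F_{Z_d}$ satisfies, for every $\mathbf x=(x_1,\dots,x_d)\in\R^d$,
\begin{equation*}
\max\Big(\sum_{i=1}^d F_{Z_i}(x_i)-(d-1),\,0\Big)\le F_{\mathbf Z}(\mathbf x)\le \min_{1\le i\le d}F_{Z_i}(x_i),
\end{equation*}
together with the analogous two-sided bound for the survival function $\overline F_{\mathbf Z}$ in terms of the $\overline F_{Z_i}$. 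The upper bound is immediate from $\p(\bigcap_i A_i)\le\min_i\p(A_i)$, and the lower bound follows by applying the union bound to the complementary events. Since $\mathbf X$, $\mathbf X^{\rm co}$ and $\mathbf X^{\rm ct}$ all share the same marginals, it suffices to show that $\mathbf X^{\rm co}$ realizes the upper Fréchet bound and $\mathbf X^{\rm ct}$ the lower one, for both $F$ and $\overline F$.

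For the comonotonic upper bound I would write $\mathbf X^{\rm co}=(F_{X_1}^{-1}(U),\dots,F_{X_d}^{-1}(U))$ for a single uniform $U$ and use the Galois relation $\{F_{X_i}^{-1}(U)\le x_i\}=\{U\le F_{X_i}(x_i)\}$ (up to a null set) to get $F_{\mathbf X^{\rm co}}(\mathbf x)=\p(U\le\min_i F_{X_i}(x_i))=\min_i F_{X_i}(x_i)$, and similarly $\overline F_{\mathbf X^{\rm co}}(\mathbf x)=\min_i\overline F_{X_i}(x_i)$. Combined with the upper Fréchet bound applied to $\mathbf X$, this yields $F_{\mathbf X}\le F_{\mathbf X^{\rm co}}$ and $\overline F_{\mathbf X}\le\overline F_{\mathbf X^{\rm co}}$, i.e., $\mathbf X\le_{\rm c}\mathbf X^{\rm co}$.

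The counter-monotonic lower bound is the substantive part. For $d=2$ it is classical: the counter-monotonic version has $F_{\mathbf X^{\rm ct}}(\mathbf x)=\max(F_{X_1}(x_1)+F_{X_2}(x_2)-1,0)$ and $\overline F_{\mathbf X^{\rm ct}}(\mathbf x)=\max(\overline F_{X_1}(x_1)+\overline F_{X_2}(x_2)-1,0)$, so it realizes the lower Fréchet bound and $\mathbf X^{\rm ct}\le_{\rm c}\mathbf X$ follows at once. For $d\ge3$ the lower Fréchet bound is generally not itself a distribution function, which is exactly why a counter-monotonic version need not exist; when it does, I would exploit the rigidity of pairwise counter-monotonicity. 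Writing $m_i=\essinf X_i$, the structural fact I would invoke (from the characterization of pairwise counter-monotonic vectors) is that the events $A_i=\{X_i^{\rm ct}>m_i\}$ are mutually exclusive, whence $\sum_i\p(A_i)\le1$. Then, for $\mathbf x$ with $x_i\ge m_i$ for all $i$, the events $\{X_i^{\rm ct}>x_i\}\subseteq A_i$ are pairwise disjoint; as $d\ge3$ their full intersection is empty, giving $\overline F_{\mathbf X^{\rm ct}}(\mathbf x)=0$, while inclusion--exclusion on the complements gives $F_{\mathbf X^{\rm ct}}(\mathbf x)=1-\sum_i\overline F_{X_i}(x_i)=\sum_i F_{X_i}(x_i)-(d-1)\ge0$. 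Both expressions coincide with the lower Fréchet bounds, and a short case analysis would cover those $\mathbf x$ with some $x_i<m_i$ (where the corresponding marginal event is null, resp.\ almost surely the whole space, effectively reducing the dimension). Comparing with the lower Fréchet bounds for an arbitrary $\mathbf X$ with the same marginals then yields $\mathbf X^{\rm ct}\le_{\rm c}\mathbf X$.

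I expect the main obstacle to be precisely this $d\ge3$ counter-monotonic lower bound, namely establishing that the counter-monotonic version attains the lower Fréchet bound. This is where the mutual-exclusivity structure of pairwise counter-monotonicity is indispensable: the naive pairwise estimate $\overline F_{\mathbf X^{\rm ct}}(\mathbf x)\le\min_{i\ne j}\max(\overline F_{X_i}(x_i)+\overline F_{X_j}(x_j)-1,0)$ is strictly too weak to conclude on its own. I would also be careful to treat thresholds below the essential infima separately, as indicated above.
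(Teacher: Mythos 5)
Your route is the same as the paper's: the paper justifies this proposition in one line by the Fr\'echet--Hoeffding bounds, and your comonotonic half and your bivariate counter-monotonic half are correct and complete fleshings-out of exactly that. The gap is in the $d\ge3$ counter-monotonic argument, where the structural fact you invoke is stated too strongly and is false in half of the cases. The characterization behind it (\citet[Theorem~3]{D72}; see also \citet[Proposition~3.2]{PW15}, and the paper's proof of Theorem~\ref{th:counter-SM}) is a \emph{dichotomy}: a counter-monotonic vector with at least three non-degenerate components satisfies either \eqref{eq:cond-ct-B}, in which case the events $\{X_i^{\rm ct}>\essinf X_i\}$ are pairwise a.s.\ disjoint, or \eqref{eq:cond-ct-A}, in which case it is the events $\{X_i^{\rm ct}<\esssup X_i\}$ that are pairwise a.s.\ disjoint --- and typically only one of the two holds. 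Concretely, let $X_1,X_2,X_3$ be Bernoulli with $\p(X_i=1)=3/4$: condition \eqref{eq:cond-ct-A} holds, a counter-monotonic version exists and makes the events $\{X_i^{\rm ct}=0\}$ disjoint, but $\sum_i\p(X_i^{\rm ct}>0)=9/4>1$, so your sets $A_i$ cannot be disjoint; correspondingly $\overline F_{\mathbf X^{\rm ct}}(0,0,0)=1/4\neq0$, contradicting your claim that the survival function vanishes whenever $x_i\ge\essinf X_i$ for all $i$. (The proposition is still true there: $1/4$ is exactly the trivariate lower Fr\'echet bound for the survival function.) The repair is routine: apply your argument to $-\mathbf X^{\rm ct}$ and $-\mathbf X$, noting that negation preserves pairwise counter-monotonicity and converts the ``from above'' case into the ``from below'' case, and that $-\mathbf Z\le_{\rm c}-\mathbf W$ implies $\mathbf Z\le_{\rm c}\mathbf W$ because reflection swaps the roles of $F$ and $\overline F$ up to left limits; alternatively, run the mirrored case analysis with $\{X_i^{\rm ct}\le x_i\}$ playing the role of $\{X_i^{\rm ct}>x_i\}$.

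A second, smaller omission: the dichotomy itself presupposes at least three non-degenerate components, so for $d\ge3$ with at most two non-degenerate components your structural fact fails in both directions. For example, $(X,-X,0)$ with $X$ uniform on $[0,1]$ is counter-monotonic, yet neither $\{X_i>\essinf X_i\}$ nor $\{X_i<\esssup X_i\}$ are disjoint families, and for $\mathbf x$ with all $x_i\ge\essinf X_i$ the events $\{X_1^{\rm ct}>x_1\}$ and $\{X_2^{\rm ct}>x_2\}$ can overlap, so your disjoint-union evaluation of $F_{\mathbf X^{\rm ct}}(\mathbf x)$ breaks down (it can even return a negative number). Here one should strip the degenerate coordinates --- they contribute the same indicator factor to both sides of each inequality --- and invoke your bivariate case for what remains. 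With the symmetric case handled by reflection and this degenerate-component case split off, your proof is complete and coincides in substance with the argument the paper intends.
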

The bounds in Proposition~\ref{prop:conc} transpose directly to $\le_{\rm sm}^{\diamond}$ in dimension 2 thanks to the following equivalence. 
\begin{proposition}
\label{prop:equivalent-conc-diamond}
    On $(L^0)^2$, the two relations $\le_{\rm c}$ and $\le_{\rm sm}^{\diamond}$ are equivalent. They are not equivalent when $d\geq 3$ on  $(L^\infty)^d$. 
\end{proposition}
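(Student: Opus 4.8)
The plan is to treat the $d=2$ equivalence by proving each implication separately, and then to refute the equivalence for $d\ge3$ by a separating pair. One implication is free: by \eqref{eq:implications-sm}, $\mathbf X\le_{\rm sm}^{\diamond}\mathbf Y\Rightarrow\mathbf X\le_{\rm c}\mathbf Y$ holds in every dimension, since the orthant indicators $\mathbf x\mapsto\prod_i\id_{\{x_i\le w_i\}}$ and $\mathbf x\mapsto\prod_i\id_{\{x_i>w_i\}}$ are bounded, right-continuous and supermodular. So for $d=2$ it remains to prove the converse $\mathbf X\le_{\rm c}\mathbf Y\Rightarrow\mathbf X\le_{\rm sm}^{\diamond}\mathbf Y$, and it is here that the dimension two plays a decisive role.

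For that converse I would use the classical fact that on $\R^2$ supermodularity coincides with the $2$-increasing property: a bounded right-continuous $\varphi\in\Phi_{\rm sm}^2$ induces a nonnegative Borel measure $\mu_\varphi$ via $\mu_\varphi((a,b]\times(c,d])=\varphi(b,d)-\varphi(b,c)-\varphi(a,d)+\varphi(a,c)\ge0$, of finite total mass because $\varphi$ is bounded. The key is the bivariate Hoeffding/Tchen integration-by-parts identity (cf.~\cite{tchen1980}): writing $\mathbf X=(X_1,X_2)$, $\mathbf Y=(Y_1,Y_2)$ with $X_i\laweq Y_i$ (equal marginals, which $\le_{\rm c}$ forces), the one-variable parts of $\varphi$ contribute equally and one is left with
\begin{equation*}
\E[\varphi(X_1,X_2)]-\E[\varphi(Y_1,Y_2)]=\int_{\R^2}\big(F_{\mathbf X}(s,t)-F_{\mathbf Y}(s,t)\big)\,\mu_\varphi(\d s,\d t).
\end{equation*}
Since $\mathbf X\le_{\rm c}\mathbf Y$ gives $F_{\mathbf X}\le F_{\mathbf Y}$ pointwise and $\mu_\varphi\ge0$, the right-hand side is nonpositive; boundedness of $\varphi$ makes every integral finite so Tonelli applies cleanly. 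This yields $\E[\varphi(\mathbf X)]\le\E[\varphi(\mathbf Y)]$ for all bounded right-continuous $\varphi$, i.e.~$\mathbf X\le_{\rm sm}^{\diamond}\mathbf Y$.

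For the non-equivalence when $d\ge3$ I would exhibit bounded vectors $\mathbf X,\mathbf Y\in(L^\infty)^d$ with $\mathbf X\le_{\rm c}\mathbf Y$ but $\E[\varphi(\mathbf X)]>\E[\varphi(\mathbf Y)]$ for some bounded right-continuous $\varphi\in\Phi_{\rm sm}^d$. The cleanest route is to take $\mathbf X$ and $\mathbf Y$ with \emph{identical bivariate marginals}: then every pairwise-supermodular test function integrates equally, so $\le_{\rm c}$ reduces to a comparison of the genuinely trivariate orthant values $\p(\mathbf X>\mathbf x)$ and $\p(\mathbf X\le\mathbf x)$, and one tunes a purely trivariate perturbation of the joint law moving all these orthant probabilities in the direction demanded by $\le_{\rm c}$ while a genuinely trivariate supermodular $\varphi$ (one that is not a nonnegative combination of orthant indicators) detects the perturbation with the opposite sign. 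I would first check that two-point marginals do \emph{not} suffice — on $\{0,1\}^3$ a direct computation shows that, modulo modular terms, the extreme supermodular functions reduce to orthant probabilities (in particular the all-zero lower-orthant value is pinned to the all-one upper-orthant value), so concordance there already forces $\le_{\rm sm}^{\diamond}$ — and hence use at least three support points per coordinate, as in the classical counterexamples (e.g.~\cite{MS00}); a $d=3$ example extends to all $d\ge3$ by appending constant coordinates, which preserves both $\le_{\rm c}$ and the failure of $\le_{\rm sm}^{\diamond}$.

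The main obstacle is split across the two parts. In the $d=2$ part, the delicate step is the rigorous passage from a bounded right-continuous supermodular $\varphi$ to its representing measure $\mu_\varphi$ with controlled behaviour at $\pm\infty$, together with the careful matching of the strict/non-strict and left-/right-continuous versions of the orthant functions when invoking $\le_{\rm c}$; boundedness is exactly what guarantees $\mu_\varphi$ is finite and the identity is legitimate. For $d\ge3$ the real work is the counterexample itself: one must locate a supermodular function escaping the orthant cone and simultaneously calibrate the two joint laws so that every concordance inequality holds while the chosen $\varphi$ reverses. The binary computation is what makes the difficulty precise — it shows the separation is impossible with two-point marginals and explains why three levels are genuinely needed.
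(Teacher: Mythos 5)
Your proposal is correct and, at its core, follows the same route as the paper, which handles both claims by citation: the implication $\le_{\rm sm}^{\diamond}\Rightarrow\le_{\rm c}$ comes from the orthant indicators as in \eqref{eq:implications-sm}, the converse in dimension $2$ is exactly \citet[Corollary~2.1]{tchen1980}, and the failure for $d\ge 3$ is \citet[Example~1]{M97}, which generalizes by appending constant coordinates. Where you differ is in unpacking the first citation: your Hoeffding/integration-by-parts argument --- representing a bounded right-continuous $\varphi\in\Phi_{\rm sm}^2$ by a finite nonnegative measure $\mu_\varphi$ and writing the difference of expectations as an integral of the difference of joint distribution functions against $\mu_\varphi$ --- is precisely the mechanism behind Tchen's result, and your sketch is sound, including the caveats you flag about behaviour at $-\infty$ and about which (left- or right-continuous, strict or non-strict) orthant functions appear; since $F_{\mathbf X}\le F_{\mathbf Y}$ pointwise passes to left limits and, under equal marginals, to the survival functions, these subtleties do not affect the conclusion. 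This buys self-containedness at the cost of the technical work you acknowledge. For $d\ge 3$ you are in the same position as the paper: the separating example is not constructed but delegated to the literature, with your pointer to the classical three-point-margin counterexamples of \cite{MS00} playing the role of the paper's citation of \citet[Example~1]{M97}; be aware that your ``recipe'' (identical bivariate margins, purely trivariate perturbation detected by a genuinely trivariate supermodular function) describes the structure of those examples but does not by itself prove that such a configuration exists, so the citation is doing real work there. Finally, your side computation on $\{0,1\}^3$ is correct and is a nice addition absent from the paper: normalizing $\varphi$ to vanish at $(0,0,0)$ and at the three unit vectors, the supermodular cone modulo modular functions has exactly five extreme rays, namely $x_ix_j$ for the three pairs, $x_1x_2x_3$, and $x_1x_2+x_1x_3+x_2x_3-x_1x_2x_3$, each of which is an orthant indicator modulo modular terms; hence concordance does force $\le_{\rm sm}^{\diamond}$ for binary margins, which explains why at least three support points per coordinate are genuinely needed in any counterexample.
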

\begin{proof}
    In dimension $d=2$, by \citet[Corollary 2.1]{tchen1980}, for any $X,Y,Z,W\in L^0$, the ordering $F_{X,Y}\leq F_{Z,W}$ implies $\mathbb{E}[\varphi(X,Y)]\leq\mathbb{E}[\varphi(Z,W)]$ for all bounded, right-continuous $\varphi\in\Phi_{\rm sm}^2$. 
For an example where $\le_{\rm c} \,\,\,\,\not\!\!\!\!\implies \le_{\rm sm}^{\diamond}$ in dimension $d=3$, see \citet[Example~1]{M97}, which also generalizes to $d\ge3$.
\end{proof}

Proposition~\ref{prop:equivalent-conc-diamond} is well-known in the literature, although references, while relying on \cite{tchen1980}'s proof, often omit his requirements on $\varphi$; notably, $\varphi(X^*,Y^*)$ must be uniformly integrable for all $(X^*,Y^*)$ in the Fr\'echet class of $(X,Y)$, that is, the class of random vectors with the specified marginal distributions. A similar result can be found in \citet[Theorem 3.1.2 (a)]{R06MT} with the lighter requirement of $\varphi(X,Y)$, $\varphi(Z, W)$, and also $\varphi(X^{\perp},Y^{\perp})$ to produce finite expectations, where $X^{\perp}\laweq X$, $Y^{\perp}\laweq Y$ and $X^{\perp}$ and $Y^{\perp}$ are independent. Hence, one cannot quite establish the equivalence between $\leq_{\rm c}$ and $\le_{\rm sm}^{\dagger}$ from their result. 

Combining Propositions \ref{prop:conc} and \ref{prop:equivalent-conc-diamond}, we immediately arrive at the following conclusion. 
 \begin{proposition}
 \label{th:prop-sm}
  For any $X,Y\in L^0$, we have
  \begin{equation}
  (X^{\rm ct},Y^{\rm ct}) \le_{\rm sm}^{\diamond} (X,Y) \le_{\rm sm}^{\diamond}  (X^{\rm co},Y^{\rm co}) \label{eq:sm-main}
  \end{equation}
  where $(X^{\rm co}, Y^{\rm co})$ is a comonotonic version of $(X,Y)$ and $(X^{\rm ct},Y^{\rm ct})$ is a counter-monotonic version of $(X,Y)$.  
 \end{proposition}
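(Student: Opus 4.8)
The plan is to derive the chain \eqref{eq:sm-main} by chaining together the two propositions that immediately precede it, specialized to dimension $d=2$. The strategy is entirely compositional: establish the bounds in the coarser concordance order using Proposition~\ref{prop:conc}, then upgrade them to the $^\diamond$supermodular order using the equivalence of Proposition~\ref{prop:equivalent-conc-diamond}.

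First I would apply Proposition~\ref{prop:conc} to the bivariate random vector $(X,Y)\in(L^0)^2$. The upper bound $(X,Y)\le_{\rm c}(X^{\rm co},Y^{\rm co})$ is immediate from the first part of that proposition. For the lower bound, I would note that in dimension $d=2$ a counter-monotonic version of any pair always exists (the non-existence phenomenon only arises for $d\ge 3$, as recalled in Section~\ref{sect:CXsum-explanations}), so the second part of Proposition~\ref{prop:conc} applies unconditionally and yields $(X^{\rm ct},Y^{\rm ct})\le_{\rm c}(X,Y)$. Together these give the concordance chain $(X^{\rm ct},Y^{\rm ct})\le_{\rm c}(X,Y)\le_{\rm c}(X^{\rm co},Y^{\rm co})$.

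Next I would invoke Proposition~\ref{prop:equivalent-conc-diamond}, which states that $\le_{\rm c}$ and $\le_{\rm sm}^{\diamond}$ coincide on $(L^0)^2$. Replacing each concordance relation in the chain above by the corresponding $^\diamond$supermodular relation produces exactly \eqref{eq:sm-main}, completing the argument.

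I do not expect a genuine obstacle, as both inputs are already proved; this is a one-line deduction. The only points meriting a sentence of care are that the lower bound relies on the existence of the counter-monotonic version, which is guaranteed precisely because we are in dimension~$2$, and that the equivalence in Proposition~\ref{prop:equivalent-conc-diamond} is itself a strictly two-dimensional fact. Consequently the argument does not transfer verbatim to $d\ge 3$, which is consistent with the separate, more delicate treatment of higher dimensions pursued elsewhere in the paper.
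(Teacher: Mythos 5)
Your proposal is correct and matches the paper's own argument exactly: the paper states that Proposition~\ref{th:prop-sm} follows immediately by combining Proposition~\ref{prop:conc} (the Fr\'echet--Hoeffding concordance bounds) with the $d=2$ equivalence of $\le_{\rm c}$ and $\le_{\rm sm}^{\diamond}$ from Proposition~\ref{prop:equivalent-conc-diamond}. Your added remarks --- that a counter-monotonic version always exists in dimension two and that the equivalence is strictly a two-dimensional fact --- are accurate and consistent with the paper's discussion.
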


From an OT perspective, Proposition~\ref{th:prop-sm} amounts to the statement that a counter-monotonic transport map is a minimizer for any right-continuous and bounded supermodular cost function $\varphi\in\Phi_{\rm sm}^2$. 
Indeed, this can be extended to lower semicontinuous
supermodular functions $\varphi$ that are bounded from below by a separable form of integrable functions; see \citet[Theorem 5.10]{Villani}.
As this requirement is not satisfied by all functions generating  $\leq_{\rm sm}$ and $\leq_{\rm sm}^\dagger$, 
it is unclear to us whether \eqref{eq:sm-main}   also holds for $\leq_{\rm sm}$ or $\leq_{\rm sm}^\dagger$.

More generally, we do not know if $\le_{\rm sm}$, $\le^{\dagger}_{\rm sm}$ and $\le^{\diamond}_{\rm sm}$ all render the same partial ordering  on any one of $(L^{\infty})^2$, $(L^1)^2$ or $(L^0)^2$. We suspect this could be the case. Corollary~\ref{coro:Simons} gives evidence towards this conjecture because one thus cannot obtain a bivariate counterpart to Example~\ref{ex:SIMONS} using separable supermodular functions.
Moreover, the four orders satisfy antisymmetry. 
They are not plagued by the same problem as $\leq_{\rm cx}$ and $\leq_{\rm cx}^{\dagger}$ that $L^0$ contains random variables for which there are not enough elements in $\mathcal{U}_{\rm cx}$ yielding finite or well-defined expectations to fully characterize the distribution. Functions $\{\mathbf {x}\mapsto\prod_{i=1}^d \id_{\{x_i\leq w_i\}}:\mathbf {w}\in\mathbb{R}^d\}$ are supermodular, bounded, right-continuous, and the sequence of their expectations fully characterizes the distributions. As a result, one cannot design counter-examples similar to the one in Example \ref{ex:different-def} where the $^\dagger$convex order relation holds simply because only constant functions were to be compared.   

As indicated in Proposition~\ref{prop:conc}, when $d\geq 3$, the concordance   and   $^\diamond$supermodular orders are different. 
The $^\diamond$supermodular order $\leq_{\rm sm}^\diamond$  is also distinct from $\leq_{\rm sm}$ and $\leq_{\rm sm}^{\dagger}$ when $d\geq 3$.
The difference can be inferred from our discussion in the Introduction and the following proposition. 

\begin{proposition}
\label{prop:diamond}
    For any $d\ge 2$ and $\mathbf{X} \in (L^0)^d$, we have
    \begin{equation}
    \label{eq:upper-diamond}
     \mathbf{X} \le_{\rm sm}^{\diamond} \mathbf{X}^{\rm co},
    \end{equation}
    where $\mathbf{X}^{\rm co}$ is a comonotonic version of $\mathbf{X}$. For $d\ge 3$, 
        \begin{equation*}
     \mathbf{X} \le_{\rm sm}^{\dagger} \mathbf{X}^{\rm co} \mbox{~and~}  \mathbf{X} \le_{\rm sm}  \mathbf{X}^{\rm co} ~ \mbox{fail to hold for some $
     \mathbf X\in (L^\infty)^d$.}
    \end{equation*}
\end{proposition}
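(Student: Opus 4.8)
The statement has two parts: the universal upper bound $\mathbf{X}\le_{\rm sm}^{\diamond}\mathbf{X}^{\rm co}$, valid for every $d\ge 2$, and the failure of the two stronger relations $\le_{\rm sm}^{\dagger}$ and $\le_{\rm sm}$ for some bounded $\mathbf{X}$ once $d\ge 3$. The plan is to treat the two parts separately, deducing the failure part almost entirely from Example~\ref{ex:SIMONS}.

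For the upper bound \eqref{eq:upper-diamond}, the key simplification is that every admissible test function $\varphi$ is bounded, so all expectations are automatically finite and none of the $\infty-\infty$ pathologies of the paper can arise. I would stress at the outset that one cannot simply combine Proposition~\ref{prop:conc} (which yields $\mathbf{X}\le_{\rm c}\mathbf{X}^{\rm co}$) with Proposition~\ref{prop:equivalent-conc-diamond}, because the equivalence of $\le_{\rm c}$ and $\le_{\rm sm}^{\diamond}$ is available only for $d=2$; for $d\ge 3$ concordance is strictly weaker. Instead the argument must exploit that $\mathbf{X}^{\rm co}$ is genuinely comonotonic. Concretely, I would first reduce to finitely supported marginals by discretizing each coordinate (approaching from above, so that right-continuity of $\varphi$ guarantees $\varphi(\mathbf{X}^{(n)})\to\varphi(\mathbf{X})$ and $\varphi(\mathbf{X}^{(n),\rm co})\to\varphi(\mathbf{X}^{\rm co})$) and passing to the limit by bounded convergence. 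For finitely supported marginals, which after refining atoms may be taken equiprobable, the inequality reduces to a finite rearrangement statement that I would settle by the uncrossing swap: replacing two atoms $\mathbf{a},\mathbf{b}$ in the support of a coupling by $\mathbf{a}\wedge\mathbf{b}$ and $\mathbf{a}\vee\mathbf{b}$ leaves every marginal unchanged and, by supermodularity, does not decrease $\varphi(\mathbf{a})+\varphi(\mathbf{b})$; iterating drives any coupling to the comonotonic one while never decreasing $\E[\varphi]$. Equivalently, since boundedness renders the integrability hypotheses vacuous, one may invoke the classical comonotonic bound of \citet[Theorem~5]{tchen1980} directly.

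For the failure part, I would point straight to Example~\ref{ex:SIMONS}. Take $\mathbf{X}=(X,Y,Z)$ as constructed there, which lies in $(L^\infty)^3$ since $X,Y,Z$ are standard uniforms, and let $\varphi$ be the separable supermodular function \eqref{eq:COUNTEREXAMPLE}. Both $\E[\varphi(\mathbf{X})]=2\log 2$ and $\E[\varphi(\mathbf{X}^{\rm co})]=0$ are finite, yet $\E[\varphi(\mathbf{X})]>\E[\varphi(\mathbf{X}^{\rm co})]$; thus the defining inequality of $\le_{\rm sm}^{\dagger}$ is violated by a function with finite expectations, so $\mathbf{X}\not\le_{\rm sm}^{\dagger}\mathbf{X}^{\rm co}$, and since $\le_{\rm sm}\Rightarrow\le_{\rm sm}^{\dagger}$ by \eqref{eq:implications-sm}, also $\mathbf{X}\not\le_{\rm sm}\mathbf{X}^{\rm co}$. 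For $d>3$ I would pad with arbitrary bounded coordinates $X_4,\dots,X_d$ and view the same $\varphi$ as a function of the first three arguments only; appending dummy variables preserves supermodularity, keeps $\E[\varphi(\mathbf{X}^{\rm co})]=0$ and $\E[\varphi(\mathbf{X})]=2\log 2$, so the counterexample persists in every dimension $d\ge 3$.

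The routine part is the failure statement, which merely repackages the already-established Example~\ref{ex:SIMONS}. The main obstacle is the upper bound in dimensions $d\ge 3$: the tempting route through concordance is closed off by Proposition~\ref{prop:equivalent-conc-diamond}, so the argument must use comonotonicity of $\mathbf{X}^{\rm co}$ itself, and the care lies in the discretization-and-limit step---choosing a discretization compatible with the comonotone coupling and justifying passage to the limit via right-continuity and boundedness---before the clean finite rearrangement inequality can be applied.
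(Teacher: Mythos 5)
Your proposal is correct, and its two halves line up with the paper's proof to different degrees. The failure part is exactly the paper's argument: the paper also points to Example~\ref{ex:SIMONS}, emphasizing that the function $\varphi$ in \eqref{eq:COUNTEREXAMPLE} is unbounded yet yields \emph{finite} expectations $2\log 2$ and $0$ for $\varphi(\mathbf X)$ and $\varphi(\mathbf X^{\rm co})$, so it is an admissible test function for both $\le_{\rm sm}$ and $\le_{\rm sm}^{\dagger}$ but not for $\le_{\rm sm}^{\diamond}$; your explicit padding to $d>3$ with dummy coordinates is a detail the paper leaves implicit, and it is handled correctly. For the upper bound \eqref{eq:upper-diamond}, the paper does not give an argument at all: it attributes the result to \cite{L53}, with a more general version and shorter proof in \citet[Theorem~5]{tchen1980} --- precisely the shortcut you mention at the end of your second paragraph, justified because boundedness and right-continuity make Tchen's integrability and regularity hypotheses vacuous. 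Your primary route --- truncate, discretize from above so right-continuity plus bounded convergence passes to the limit, then settle the finite case by the uncrossing swap $(\mathbf a,\mathbf b)\mapsto(\mathbf a\wedge\mathbf b,\mathbf a\vee\mathbf b)$ --- is a self-contained reconstruction of essentially that classical rearrangement proof. It buys transparency about exactly which hypotheses on $\varphi$ are used (a point the paper cares about, given its critique of how Tchen's theorem is misquoted elsewhere), at the cost of technical bookkeeping you only gesture at: unbounded marginals must be truncated before a grid discretization is finitely supported (truncation is harmless since it is an increasing transform, preserves comonotonicity, and is eventually exact pathwise), the reduction to equiprobable atoms needs rational probabilities or a further approximation, and one must check the swapping procedure terminates. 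Also, your observation that discretization commutes with taking comonotonic versions (because rounding up is increasing) is the right compatibility point and deserves to be stated as a lemma if the self-contained route is written out in full.
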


\begin{proof}
    The first part of this result is due to \cite{L53}. A slightly more general version can also be found in \citet[Theorem~5]{tchen1980} along with a shorter proof. Example~\ref{ex:SIMONS} shows that \eqref{eq:upper-diamond} cannot be extended to $\le_{\rm sm}$ or $\le_{\rm sm}^{\dagger}$ on $(L^\infty)^d$. Note that the function $\varphi$ therein is unbounded, but it produces finite expectations for the two random random variables $\varphi(\mathbf X)$
    and $\varphi(\mathbf X^{\rm co})$
    in the example. Therefore, it is an element of $\Phi_{\rm sm}^d$ required to establish comparisons according to $\leq_{\rm sm}$ and $\leq_{\rm sm}^{\dagger}$, but not $\le_{\rm sm}^{\diamond}$. 
\end{proof}

If one reduces the domain of the supermodular order via the restriction \eqref{eq:bfrak}, then the issue raised by Example~\ref{ex:SIMONS} does not occur, as the function $\varphi$ in \eqref{eq:COUNTEREXAMPLE} does not satisfy \eqref{eq:bfrak} for any $b_{\mathfrak F}$  
with $\E[b_{\mathfrak F}(X,Y,Z)]<\infty$, and hence such $(X,Y,Z)$ is excluded. 


Proposition~\ref{prop:diamond} indicates that, in general, $\le_{\rm sm}^{\diamond} \,\,\,\,\not\!\!\!\!\implies \le_{\rm sm}$ and $\le_{\rm sm}^{\diamond} \,\,\,\,\not\!\!\!\!\implies \le_{\rm sm}^{\dagger}$ when $d\geq 3$, even on $(L^{\infty})^d$. 
In other words, $\leq_{\rm sm}$ and $\leq_{\rm sm}^{\dagger}$ cannot be generated  solely by right-continuous bounded functions. 
By Theorem 2.4.2(g) of \cite{muller2002}, this means that the supermodular order and the $^{\dagger}$supermodular order are not closed with respect to weak convergence.

In contrast to Proposition~\ref{prop:diamond}, counter-monotonic random vectors in dimensions $d\geq 3$, when they exist, are indeed the smallest elements within their Fr\'echet class with respect to $\le_{\rm sm}$ (hence also $\leq_{\rm sm}^{\dagger}$
and $\leq_{\rm sm}^{\diamond}$).  

\begin{theorem}
\label{th:counter-SM}
    Let $d\geq 3$. For any $\mathbf{X} \in (L^0)^d$, with at least $3$ non-degenerate components, we have
    \begin{equation*}
     \mathbf{X}^{\rm ct} \le_{\rm sm} \mathbf{X},
    \end{equation*}
     if a counter-monotonic version $\mathbf{X}^{\rm ct}$ of $\mathbf{X}$ exists. 
\end{theorem}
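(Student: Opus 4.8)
The plan is to replace an arbitrary supermodular test function $\varphi$ by a \emph{separable minorant} $\psi$ that coincides with $\varphi$ on the (very thin) support of $\mathbf X^{\rm ct}$, and then to exploit that $\mathbf X$ and $\mathbf X^{\rm ct}$ share their marginals. First I would record the structure of counter-monotonic vectors in dimension $d\ge 3$: when at least three components are non-degenerate and a counter-monotonic version exists, such a vector is \emph{mutually exclusive}. Writing $m_i=\essinf X_i$ (which is then finite and an atom of $X_i$), the events $A_i=\{X_i^{\rm ct}>m_i\}$, $i\in[d]$, are pairwise disjoint and $X_i^{\rm ct}=m_i$ on $A_i^c$; this is the characterization of \cite{D72} and \citet[Theorem 5.1]{CL14}. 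Consequently, at each $\omega$ at most one coordinate of $\mathbf X^{\rm ct}(\omega)$ exceeds its anchor value. The hypothesis of three non-degenerate components is exactly what forces this rigid structure; with only two it degenerates to the planar lower Fr\'echet bound, for which the analogous $\le_{\rm sm}$ statement is the question left open after Proposition~\ref{th:prop-sm}.

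Next, set $\mathbf m=(m_1,\dots,m_d)$ and $g_i(t)=\varphi(\mathbf m+(t-m_i)\mathbf e_i)-\varphi(\mathbf m)$, and define the separable function $\psi(\mathbf x)=\varphi(\mathbf m)+\sum_{i=1}^d g_i(x_i)$. Two facts drive the argument. First, on the support of $\mathbf X^{\rm ct}$, since at most one coordinate moves off $\mathbf m$ and $g_i(m_i)=0$, we get $\psi(\mathbf X^{\rm ct})=\varphi(\mathbf X^{\rm ct})$ almost surely. Second, for every $\mathbf x\ge\mathbf m$ the inequality $\psi(\mathbf x)\le\varphi(\mathbf x)$ holds, equivalently $\sum_{i}\varphi(\mathbf m+(x_i-m_i)\mathbf e_i)-(d-1)\varphi(\mathbf m)\le\varphi(\mathbf x)$. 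I would prove this by induction on $d$: the base case $d=2$ is supermodularity applied to $(x_1,m_2)$ and $(m_1,x_2)$, whose meet is $\mathbf m$ and join is $\mathbf x$; the inductive step applies supermodularity to $(x_1,\dots,x_{d-1},m_d)$ and $\mathbf m+(x_d-m_d)\mathbf e_d$, then the hypothesis to the supermodular section $\varphi(\cdot,m_d)$.

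With these in hand the conclusion is a short chain built on marginal matching. Because $\psi$ is separable and $X_i^{\rm ct}\laweq X_i$, one has $\E[\psi(\mathbf X^{\rm ct})]=\varphi(\mathbf m)+\sum_i\E[g_i(X_i)]=\E[\psi(\mathbf X)]$; combining this with the two facts yields $\E[\varphi(\mathbf X^{\rm ct})]=\E[\psi(\mathbf X^{\rm ct})]=\E[\psi(\mathbf X)]\le\E[\varphi(\mathbf X)]$, which is the claim. On $(L^\infty)^d$ or $(L^1)^d$ this is immediate, and it already covers the $L^1$ special case and the sum-based Theorem~\ref{th:CXm-counter-monotonic} for vectors with three non-degenerate components.

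The main obstacle is making the chain rigorous on $(L^0)^d$, where the expectations may be infinite and the individual $\E[g_i(X_i)]$ need not be finite, so the separability identity flirts with $\infty-\infty$. Here the mutual-exclusivity structure is indispensable: the summands $g_i(X_i^{\rm ct})=g_i(X_i^{\rm ct})\id_{A_i}$ have disjoint supports, whence $\E[(\psi(\mathbf X^{\rm ct})-\varphi(\mathbf m))_{\pm}]=\sum_i\E[(g_i(X_i^{\rm ct}))_{\pm}]$ \emph{exactly}. Well-definedness of $\E[\varphi(\mathbf X^{\rm ct})]$ then forces one of these two sums to be finite, and matching marginals transfers this to $\mathbf X$ through the bounds $(\sum_i g_i(X_i))_{\pm}\le\sum_i (g_i(X_i))_{\pm}$, securing both well-definedness of $\E[\psi(\mathbf X)]$ and the equality $\E[\psi(\mathbf X)]=\E[\psi(\mathbf X^{\rm ct})]$; the final step $\E[\psi(\mathbf X)]\le\E[\varphi(\mathbf X)]$ is then monotonicity of a well-defined expectation under the pointwise bound $\psi\le\varphi$. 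I expect this bookkeeping of the cancellations, together with invoking the precise form of the counter-monotonic structure, to be the delicate points; the supermodular algebra itself is routine.
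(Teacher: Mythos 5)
Your proposal follows essentially the same route as the paper's proof: after the normalization $\mathbf m=\mathbf 0$ and $\varphi(\mathbf 0)=0$, your separable minorant $\psi$ is exactly the paper's key inequality $\varphi(\mathbf x)\ge\sum_{i=1}^d\varphi(x_i\mathbf e_i)$, and the remaining ingredients coincide with the paper's: the mutual-exclusivity structure from \cite{D72}, the almost-sure identity $\psi(\mathbf X^{\rm ct})=\varphi(\mathbf X^{\rm ct})$, marginal matching for the separable part, and the disjoint-support bookkeeping that makes all expectations well-defined. Your explicit induction for $\psi\le\varphi$ and the careful $\pm$-part accounting are details the paper compresses into ``straightforward to verify,'' so on these points you are if anything more complete.

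There is, however, one genuine gap: your opening claim that $m_i=\essinf X_i$ is ``then finite and an atom of $X_i$'' is not always true. Dall'Aglio's characterization (quoted as \eqref{eq:cond-ct-B}--\eqref{eq:cond-ct-A} in the paper) has two cases: either $\sum_{i=1}^d\p(X_i>\essinf X_i)\le 1$ or $\sum_{i=1}^d\p(X_i<\esssup X_i)\le 1$. Only the first forces the essential infima to be finite atoms; in the second case the vector is mutually exclusive ``at the top'' and the infima may be infinite. For instance, take $d=3$ and each $X_i\le 0$ with $\p(X_i=0)=2/3$ and the remaining mass spread over $(-\infty,0)$: a counter-monotonic version exists, yet $\essinf X_i=-\infty$, so your anchors $m_i$ and hence $\psi$ are undefined. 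The repair is the one-line symmetry the paper invokes: either anchor at $\esssup X_i$ in that case, or apply your argument to $\mathbf x\mapsto\varphi(-\mathbf x)$ (still supermodular) and $-\mathbf X$ (still counter-monotonic). With that addition your proof is complete and matches the paper's.
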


It may appear surprising that counter-monotonicity is a lower bound with respect to $\leq_{\rm sm}$, while comonotonicity fails to produce an upper bound. 
The asymmetry comes from the specific requirements on a random vector's marginal distributions for a counter-monotonic version of it to exist. For instance, a vector comprising 3 standard uniform random variables, as in Example~\ref{ex:SIMONS}, do not admit a counter-monotonic version; see \cite{D72}.  
Note that 
 $  
     \mathbf{X}^{\rm ct} \le_{\rm sm}^\diamond \mathbf{X}
     $
holds true without requiring $\mathbf X$ to have at least $3$ non-degenerate components, and this follows by combining Theorem \ref{th:counter-SM} and Proposition \ref{th:prop-sm}. 
Whether $  
     \mathbf{X}^{\rm ct} \le_{\rm sm} \mathbf{X}
     $
      or $  
     \mathbf{X}^{\rm ct} \le_{\rm sm}^{\dagger} \mathbf{X}
     $
     holds without this condition is unclear to us, and it boils down to whether the relations in Proposition \ref{th:prop-sm} hold for $\le_{\rm sm}$ and $\le_{\rm sm}^{\dagger}$.


\section{Implications for risk management and decision making}
\label{sec:r1}

As we stated in the Introduction, the convex order is a prominent tool for risk management and decision making. We hereby discuss possible insights and some limitations of our main results in these contexts.

\subsection{Choosing between two risks with possibly infinite mean}

Suppose that a decision-maker   is faced with having to choose between two infinite-mean risks. Such a situation is not unrealistic: despite the financial world containing a finite amount of money, infinite-mean distributions are relevant to multiple risk-modeling situations, catastrophic risks and operational risks for example; see \cite{CW25} for a review. If the decision-maker is risk averse, its preferences for one of the two risks should be consistent with any convex ordering of the two. This is  how  strong risk aversion is defined  for finite-mean risks, following the classical paper by \cite{RS70}. Convex order thus discerns risk in some sorts; more precisely, all risk-averse agents agree that, of two risks where one is dominated by the other, the latter is the riskier.  
A risk measure that is increasing with respect to $\leq_{\rm cx}$ is called \emph{$\le_{\rm cx}$-consistent}. Some examples are the Expected Shortfall (ES)  at level $p\in (0,1)$, $$\mathrm{ES}_p(X) = \frac{1}{1-p}\int_p^1 F_X^{-1}(u) \,\mathrm{d}u,~~~~X\in L^0,$$
whose $\le_{\rm cx}$-consistency can be checked by Theorem \ref{th:ES-based-condition},
and the entropic risk measure (ER) with parameter $\beta>0$, $$\mathrm{ER}_\beta(X) = \frac{1}{\beta}\log \mathbb{E}[ e^{\beta X}],~~~~X\in L^0,$$
whose $\le_{\rm cx}$-consistency can be checked by definition. Both ES and ER are well-defined on all of $L^0$, although they can take infinite values.

A  characterization of all monetary risk measures on $L^1$ satisfying $\le_{\rm cx}$-consistency is obtained in \cite{MW20}. 
Many $\le_{\rm cx}$-consistent risk measures are also convex, and all convex risk measures on $L^p$ for $p\ge 1$ satisfying the Fatou property are $\le_{\rm cx}$-consistent.   
Indeed, \cite{FS12}  showed that the canonical space for convex risk measures is $L^1$. General convex risk measures on $L^0$ are delicate objects to study, as extending from $L^1$ to $L^0$ may lead to undefinedness, and  there is no natural notion of continuity on $L^0$   to help establish $\le_{\rm cx}$-consistency. 
Below, we consider a popular class of risk measures, the class of distortion risk measures (e.g., \cite{WYP97}, \cite{MFE15}), which we define on $L^0$. They are defined as
\begin{equation}
    \label{eq:r1-DRM}
\rho(X) = \int_{0}^\infty h(\p(X>x)) \d x +\int_{-\infty}^0 ( h(\p(X>x)) - 1 ) \d x,~~~~X\in L^0,
\end{equation}
where $h:[0,1]\to [0,1]$ is an increasing function satisfying $h(0)=0$ and $h(1)=1$ such that \eqref{eq:r1-DRM} is well-defined for all $X\in L^0$. The function $h$ is called the distortion function of $\rho$. 
It is well known that on $L^\infty$, $\rho$ is convex if and only if $h$ is concave, which is again equivalent to $\le_{\rm cx}$-consistency;  see \citet[Theorem 3]{WWW20}. 
For $\rho$ to be well-defined on $L^0$, some condition on $h$ needs to be imposed; see \cite{WWW20a} for some examples.

\begin{theorem}\label{th:r1-DRM}
For a distortion risk measure $\rho: L^0\to [-\infty,\infty]$ with distortion function $h$, the following are equivalent:
\begin{enumerate}[label=\rm(\roman*), ref=(\roman*)]
    \item \label{item:rho-cons-0} $\rho$ is $\le_{\rm cx}$-consistent on $L^0$;
    \item \label{item:rho-cons-infty} $\rho$ is $\le_{\rm cx}$-consistent on $L^\infty$;
    \item \label{item:rho-cx} $\rho$ is convex on $L^0$;
    \item \label{item:rho-cx-infty} $\rho$ is convex on $L^\infty$;
    \item \label{item:h-concave} $h$ is concave.
\end{enumerate}
\end{theorem}

In the usual framework where the  domain of $\leq_{\rm cx}$   is limited to $L^1$, one would have to exclude infinite-mean risks from comparison, even if  ES, ER, and  some other distortion risk measures are well-defined for such risks. 
 In  Sections~\ref{sect:two-def} and~\ref{sect:conditions}, we discussed that there is no need to restrain the set of loss functions or the set of compared risks for the framework given by $\leq_{\rm cx}$ to make sense. The convex order $\leq_{\rm cx}$ remains well-behaved and, although it no longer satisfies antisymmetry, it continues to satisfy its principal properties and is well-suited to compare risks in $L^0\backslash L^1$.  Let us note that risks having infinite means do not necessarily imply infinite values of $\leq_{\rm cx}$-consistent risk measures, as we will see in Example~\ref{ex:application} below. 

One of our main results, Theorem~\ref{th:cx-cx}, pertains to risk aggregation, stipulating that comonotonicity and counter-monotonicity yield bounds on riskiness, as discerned by $\leq_{\rm cx}$, of the aggregate risk, and thus
\begin{equation}
\label{eq:r1-order}
    \rho (X^{\rm ct} + Y^{\rm ct}) \leq   \rho  (X + Y) \leq    \rho  (X^{\rm co} + Y^{\rm co}),
    ~~~\mbox{for all $\le_{\rm cx}$-consistent $\rho$},
\end{equation} 
assuming that these are well-defined.  Theorem \ref{th:r1-DRM} allows us to apply \eqref{eq:r1-order} to convex distortion risk measures on $L^0$.
The relation \eqref{eq:r1-order}  was previously inaccessible for risks outside $L^1$. The following example illustrates. 

\begin{example}
\label{ex:application}
Let $X$ and $Y$ be identically distributed random variables following a negated Pareto distribution with shape parameter 0.8 and scale parameter 1, written as $-X,-Y\sim\mathrm{Pareto}(0.8)$, that is,
$$\p(X\le  x)= \p(Y\le  x)=
(1-x)^{-0.8},~~~~x\le 0.
$$
Note that $X,Y\in L^0\backslash L^1$. 
From \eqref{eq:r1-order} and the $\le_{\rm cx}$-consistency of $\ES_p$ and $\mathrm{ER}_\beta$, we can immediately obtain  chains of inequalities for $\ES_p$ and $\mathrm{ER}_\beta$.
Numerical calculations confirm this for $p\in\{0.75,0.9\}$ and $\beta \in \{0.1,0.5\}$, as shown in Table~\ref{tab:applications}. None of these values are infinite. 
Although the patterns in Table~\ref{tab:applications}  are  intuitive, especially for $\mathrm{ES}_p$ since its right-tailedness naturally eliminates the negative-infinite mean,  
previous formulations of \eqref{eq:main} would not have allowed for such deductions since they exclude $X$ and $Y$ from comparison. 
\end{example}

 \begin{table}[tb]
 \centering
 \begin{tabular}{l cccc}
 \toprule
        & $\mathrm{ES}_{0.75}$ & $\mathrm{ES}_{0.90}$& $\mathrm{ER}_{0.1}$ & $\mathrm{ER}_{0.5}$ \\
\midrule
$X^{\rm ct} + Y^{\rm ct}$&  $-3.223 $& $ -1.278$ &  $-7.07$&   $ -4.56$\\
$X^{\perp}+Y^{\perp}$&      $-0.987 $& $-0.497$ &   $-6.02$&  $ -2.98 $ \\
$X^{\rm co}+Y^{\rm co}$&   $-0.386 $& $-0.135$ &  $-4.52$ &  $ -2.12 $ \\
 \bottomrule  
 \end{tabular}
 \caption{Risk measures $\mathrm{ES}_p$, $p\in\{0.75,0.90\}$, and $\mathrm{ER}_{\beta}$, $\beta\in\{0.1,0.5\}$, of the sum of two identically-distributed negated-Pareto risks $X,Y$ such that $-X,-Y\sim\mathrm{Pareto}(0.8)$ with various dependence relations: $(X^{\rm ct}, Y^{\rm ct})$ is a counter-monotonic version of $(X,Y)$, $(X^{\perp}, Y^{\perp})$ is an independent version, and $(X^{\rm co}, Y^{\rm co})$ is a comonotonic version.}
 \label{tab:applications}
 \end{table}



\begin{table}[tb]
 \centering
 \begin{tabular}{l |cc |cc| }

         & \multicolumn{2}{c|}{$  \mathrm{Cauchy} $} & \multicolumn{2}{c|}{$ \mathrm{Pareto}(0.8)$} \\ 
     & $\mathrm{ES}_p$ & $\mathrm{ER}_{\beta}$ & $\mathrm{ES}_p$ & $\mathrm{ER}_{\beta}$\\
 \midrule  
$X^{\rm ct} + Y^{\rm ct}$ &$0$ & $0$ & $\infty$ & $\infty$\\
$X^{\perp} + Y^{\perp}$ & $\infty$ & $\infty$ & $\infty$ & $\infty$ \\
$X^{\rm co} + Y^{\rm co}$ & $\infty$ & $\infty$ & $\infty$ & $\infty$ \\
  \bottomrule
 \end{tabular}
 \caption{Convex-order-consistent measurements of risk, although possibly  infinite, are indeed ordered according to Theorem~\ref{th:CX}. Here, the values $p\in (0,1)$ and $\beta>0$ are irrelevant.}
 \label{tab:applications-infinity}
 \end{table}

Even if risk measurements end up being infinite under some dependence schemes, they remain ordered according to \eqref{eq:r1-order}; we provide two simple examples in Table~\ref{tab:applications-infinity}. 
From Theorems~\ref{th:CXm} and~\ref{th:CXm-counter-monotonic}, similar conclusions to the ones above translate to portfolios of multiple risks. 

On the contrary, in some   settings of decision making, risks may not   aggregate linearly and each matters, by its own nature (e.g., health and wealth), giving rise to cross-preferences (see, e.g., \cite{ERS07}). In such settings, a decision-maker whose multivariate loss function is supermodular and unbounded might prefer a portfolio of comonotonic risks over some other dependence relations, as we have shown in Example~\ref{ex:SIMONS}.

\subsection{Diversification for infinite-mean risks}

Recently, the literature on risk aggregation has seen a lot of interest been addressed to diversification for risks with infinite means 
\citep{CEW25, chen2024technical, CHSZ25, chen2024super, ALO25,  M24, V25}. All of these papers discuss stochastic dominance of the form 
\begin{equation}
    (\theta_1 + \theta_2)X \leq_{\rm st} \theta_1 X + \theta_2 Y \quad \text{for}~\theta_1,\theta_2>0,
    \label{eq:STO-DOM-infty}
\end{equation}
where $X,Y$ both follow some infinite-mean distribution, 
and generalizations to $n$ random variables. In particular, \cite{chen2024technical} established that \eqref{eq:STO-DOM-infty} holds for weakly negatively-associated identically-distributed infinite-mean Pareto risks; \cite{M24}, for iid super-Cauchy risks; and \cite{ALO25}, for iid InvSub risks. 

A risk-management interpretation of \eqref{eq:STO-DOM-infty} is that diversification of the risk portfolio is harmful for these infinite-mean risks. Note that the left-hand side of \eqref{eq:STO-DOM-infty} may be rewritten as $\theta_1 X^{\rm co} + \theta_2 Y^{\rm co}$, meaning comonotonicity is the less risky dependence scheme in these cases (less than negative dependence, even, for Pareto-distributed risks, see \citet[Theorem~1]{chen2024technical}). This notion of ``less riskiness" is modelled by 
the stochastic order instead of the convex order. 

Such a statement seems to contrast with our discussion from the previous subsection. Lemma~\ref{th:lemma} part \ref{item:positive} makes clear that 
Theorem~\ref{th:CX} is actually compatible with such orderings of the form \eqref{eq:STO-DOM-infty};
 we first discuss this through the following example.

\begin{example}
	Suppose $X$ and $Y$ are iid following a Pareto distribution with parameter $\alpha\in(0,1]$, that is, $\p(X>x)=x^{-\alpha}$ for $x\in [1,\infty)$. Then $X,Y \in L^{0}\backslash L^{1}$. For any $\theta_1,\theta_2 >0$,	
Theorem~\ref{th:CX} implies
$$
		\theta_1 X +\theta_2 Y \leq_{\rm cx} (\theta_1 X)^{\rm co} + (\theta_2 Y)^{\rm co} \stackrel{\rm d}{=}  (\theta_1+\theta_2) X,
 $$
which, by  Lemma \ref{th:lemma} part \ref{item:positive}, amounts to 
   	\begin{equation}
	 		\theta_1 X +\theta_2 Y \leq_{\rm dcx} (\theta_1+\theta_2) X.
	\label{eq:mutu-cx}
	\end{equation}
 Meanwhile, Theorem~1 of \cite{chen2024technical} states that \eqref{eq:STO-DOM-infty} holds, which is clearly  stronger than \eqref{eq:mutu-cx}.
While \eqref{eq:STO-DOM-infty} holds for some (but not all) other classes of distributions without finite mean, the relation \eqref{eq:mutu-cx} holds for all distributions of $X$ with infinite $\E[X_+]$. 
 
\end{example}

As we explained in Section~\ref{sect:conditions}, the convex order can loosely be interpreted as indicating deviation from the mean. This interpretation brings clarity as to why a reverse ordering of riskiness with respect to stochastic dominance, as in \eqref{eq:STO-DOM-infty}, may hold simultaneously to \eqref{eq:main}: a distribution deviating more from the infinite mean may end up stochastically smaller. 
Thus follows a shift in our perception of risk: since the mean is infinite, we now \textit{want} to deviate from it as much as possible; deviation from the mean is no longer seen as risky. The   conclusions of the previously cited papers, that diversification may produce more risk, actually stem from this shift in perception. Random variables in $L^0$ still behave the same when summed, and comonotonicity still produces the most deviation from the mean. 

In general, $\leq_{\rm st}$ is seen as much stronger in terms of comparing risks and perhaps most will agree that risk preferences should follow $\leq_{\rm st}$, even if they do not follow $\leq_{\rm cx}$. Therefore,  $\le_{\rm cx}$-consistent preferences may not adequately capture most's conception of riskiness on $L^0$.



\subsection{Limitations}

The results of this paper were mainly developed in the perspective of better understanding convex order for infinite-mean risks; as such, their practical applications have some limitations. 

First, as argued above, 
the convex order perhaps does not encapsulate adequately what constitutes ``risk" for infinite mean random variables.  In other words, while our results provide clarifications on the diversification behavior of infinite-mean risk, reconciling the convex ordering \eqref{eq:main} with \eqref{eq:STO-DOM-infty} observed for many distributions in $L^0\backslash L^1$, they lose some part of applicability due to these very clarifications.   

Second, one could also argue about the limited practicality of convex orderings when evaluations of risk are infinite. 
As highlighted in Table~\ref{tab:applications-infinity},  ES and ER  of the sum of Pareto-distributed risks are infinite, no matter the dependence relations between these risks. Any expected losses with increasing convex loss function would be infinite under all dependence schemes as well. It may be conceptually irrelevant to ask a decision-maker to choose between risks they inherently value as infinite: if truly they assess the riskiness as infinite under each dependence relation, the decision-maker should be indifferent to it --- they are in utter despair in every case. Thus, any possible convex ordering does not matter.

Our intent was mainly to provide mathematical insights and a more in-depth understanding of convex order for risks outside of $L^1$. 
We humbly leave to more philosophically inclined minds the considerations on whether a truly infinite level of risk may exist and, if so, be manageable in any sorts.

\section{Proofs of Theorems \ref{th:ES-based-condition}--\ref{th:CX}  and \ref{th:CXm-counter-monotonic}--\ref{th:r1-DRM}
\label{sect:CXsum-proof-Theorem1}}

The proofs of Theorems \ref{th:ES-based-condition} and \ref{th:dagger-ES-based-condition} rely  on 
 Lemma~\ref{th:lemma} in Section \ref{sect:conditions}, which we prove below. 
 \begin{proof}[Proof of Lemma \ref{th:lemma}]
Note that for any $u\in \mathcal U_{\rm cx}$, if $u(y)>u(x)$ for some $y>x$, then $u$ dominates a function of the form $z\mapsto az+b$ for some $a>0$ and $b\in \R$.
    Under the assumption $\E[Y_+]=\infty$ in item~\ref{item:positive}, this $u$ produces an infinite  or undefined $\E[u(Y)]$ so that
  $
        \E[u(X)] \le \E[u(Y)]
   $
    holds necessarily whenever it is well defined. Hence, remaining to establish convex ordering is to verify the inequality for decreasing $u$, that is, $u\in\mathcal{U}_{\rm dcx}$.  To prove item~\ref{item:negative}, one puts forth the same reasoning symmetrically. For item~\ref{item:trivial},
   it follows from   items~\ref{item:positive} and \ref{item:negative}, 
   because $\mathcal{U}_{\rm dcx}\cap \mathcal{U}_{\rm icx}$ contains only constant functions. In this case, $Y\leq_{\rm cx}^{\dagger} X$ also hold because all $u\in\mathcal{U}_{\rm dcx}\cup \mathcal{U}_{\rm icx}$ are overlooked in establishing comparison as they produce infinite expectations.     
\end{proof}

\begin{proof}[Proof of Theorem~\ref{th:ES-based-condition}]

First,  it is straightforward to verify that \ref{item:cond-2} and \ref{item:cond-3} are equivalent. 
For instance, when $\E[X_+]$ and $\E[Y_+]$ are finite, the equivalence between the second condition in \eqref{eq:cond-ES} and that in \eqref{eq:stoploss} is known from the proof of \citet[Proposition~3.4.7]{DDGK05} or  \citet[Equations (4.A.4)--(4.A.8)]{shaked2007}.
When $\E[X_+]$ or $\E[Y_+]$  is infinite, it is clear that  the second condition in \eqref{eq:cond-ES} and that in \eqref{eq:stoploss} are equivalent. 

The direction \ref{item:cond-1}$\Rightarrow$\ref{item:cond-3} follows immediately, because both $x\mapsto(x-w)_+$ and $x\mapsto(x-w)_-$ are convex functions for all $w\in\mathbb{R}$. 
Next, we show \ref{item:cond-3}$\Rightarrow$\ref{item:cond-1}.   Since \eqref{eq:cond-ES} is equivalent to \eqref{eq:alter-cx} when  $X,Y\in L^1$, we only need to argue the cases when at least one of  $X,Y$ is in  $L^0\setminus L^1$. 
Clearly, if $X$ is in  $L^0\setminus L^1$, so is $Y$; we will therefore assume $Y\in L^0\setminus L^1.$
If both $\mathbb{E}[Y_+]=\infty$ and $\mathbb{E}[Y_-]=\infty$, then  $X\leq_{\rm cx}Y$ is always true by Lemma~\ref{th:lemma}, item~\ref{item:trivial}. 
Next assume $\mathbb{E}[Y_+]<\infty$ and $\mathbb{E}[Y_-]=\infty$. Note that both expectations in the second inequality of \eqref{eq:stoploss} are finite, and it is well-known that functions of the form $x\mapsto (x-w)_+$ then suffice to generate $\leq_{\rm icx}$ on $L^1$, that is, for $Z,W\in L^1$, 
\begin{equation}
    \label{eq:cond-icx}
    \E[(Z-w)_+] \le     \E[(W-w)_+] 
    \mbox{~for all $w\in \R$} 
\implies 
Z\le_{\rm icx} W;
\end{equation}
see, e.g., \citet[Theorem~1.5.7]{muller2002}. 
Note that for any $m,w\in \R$,
$$((X \vee m)-w)_+ =\begin{cases}
    (X-w)_+ & m\le w;
     \\
    (X-m)_+ + (m-w) & m>w.
\end{cases} $$
Hence, the second condition in \eqref{eq:stoploss} implies 
 $ \E[(X\vee m-w)_+]\le      \E[(Y\vee m -w)_+] $.
 Using \eqref{eq:cond-icx}, we get $(X\vee m)\le_{\rm icx}  (Y\vee m)$ for all $m\in \R$.
For any 
$f\in \mathcal U_{\rm icx}$ with $\E[f(X)]$ and $\E[f(Y)]$ well-defined,
the monotone convergence theorem guarantees that $\E[f(X \vee m)] \to \E[f(X)]$
 and $\E[f(Y \vee m)] \to \E[f(Y)]$
as $m\to-\infty$. 
Using $\E[f(X\vee m)]\le \E[f(Y\vee m)]$, we get $\E[f(X)]\le \E[f(Y)]$. 
  This further gives $X\le_{\rm cx} Y$ by using  Lemma~\ref{th:lemma}, item~\ref{item:negative}. The case where $\mathbb{E}[Y_+]=\infty$ and $\mathbb{E}[Y_-]<\infty$ is covered by a symmetric argument, involving Lemma~\ref{th:lemma}, item \ref{item:positive}. 
\end{proof}

\begin{proof}[Proof of Theorem~\ref{th:dagger-ES-based-condition}]
The equivalence between \ref{item:dagger-cond-2} and \ref{item:dagger-cond-3} is straightforward by equivalence of corresponding inequalities in \eqref{eq:cond-ES} and \eqref{eq:stoploss}. As for Theorem~\ref{th:ES-based-condition}, the direction \ref{item:dagger-cond-1}$\Rightarrow$\ref{item:dagger-cond-2} follows by noting that $x\mapsto (x-w)_+$ and $x\mapsto (x-w)_-$ are convex for any $w\in\mathbb{R}$. 
If $X,Y\in L^1$, then item \ref{item:dagger-cond-2} boils down to \eqref{eq:alter-cx}, and $\le_{\rm cx}$ and $\le_{\rm cx}^{\dagger}$ are equivalent by Proposition~\ref{prop:equivalent-def}. 

We argue \ref{item:cond-3}$\Rightarrow$\ref{item:dagger-cond-1} for cases when at least one of $X,Y$ has an infinite mean. 
Suppose that $\mathbb{E}[X_-]$ is infinite. Then all $u\in\mathcal{U}_{\rm dcx}$, other than the constant functions, will produce an infinite $\mathbb{E}[u(X)]$. Hence, all non-constant $u\in\mathcal{U}_{\rm dcx}$ are   excluded when checking $X\leq_{\rm cx}^{\dagger} Y$ according to the definition of $\leq_{\rm cx}^{\dagger}$. We therefore only need to assess $\mathbb{E}[u(X)]\leq\mathbb{E}[u(Y)]$ for $u\in\mathcal{U}_{\rm icx}$; the second inequality in \eqref{eq:stoploss} suffices as discussed in the proof of Theorem~\ref{th:ES-based-condition}. The same argument also applies if rather $\mathbb{E}[Y_-]$ is infinite, and it is symmetric if $\mathbb{E}[X_+]=\infty$ or $\mathbb{E}[Y_+]=\infty$. If $\mathbb{E}[X]$ is not well-defined, then neither inequality in \eqref{eq:stoploss} needs to the checked, and $X\leq_{\rm cx}^{\dagger}Y$ by Lemma~\ref{th:lemma}, item \ref{item:cond-3}.
\end{proof}

The proof of Theorem \ref{th:CX} relies on extending the desired relations \eqref{eq:main}  from  $L^{\infty}$, as stated in the following lemma, to $L^0$. 


\begin{lemma}[Corollary 3.28 of \cite{R13}, restated on $L^\infty$]\label{lem:thL1}
For $X,Y\in L^\infty$, \eqref{eq:main} holds. 
\end{lemma}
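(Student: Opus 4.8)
The plan is to work on $L^\infty$, where $\le_{\rm cx}$ and $\le_{\rm cx}^\dagger$ coincide and every convex test function produces finite expectations, and to reduce \eqref{eq:main} to the bivariate supermodular bounds already recorded in Proposition~\ref{th:prop-sm}. Unwinding the definition, what must be shown is that for every $f\in\mathcal U_{\rm cx}$,
\[
\E[f(X^{\rm ct}+Y^{\rm ct})]\le \E[f(X+Y)]\le \E[f(X^{\rm co}+Y^{\rm co})].
\]
The structural observation that drives everything is that $(x,y)\mapsto f(g(x)+h(y))$ is supermodular whenever $f$ is convex and $g,h$ are nondecreasing: for $x_1\le x_2$, $y_1\le y_2$, setting $\delta=h(y_2)-h(y_1)\ge 0$ reduces the supermodularity inequality to the fact that $s\mapsto f(s+\delta)-f(s)$ is nondecreasing in $s$, which is exactly convexity of $f$. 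In particular $(x,y)\mapsto f(x+y)$ is supermodular, and this is the cost function I would like to feed into the $\le_{\rm sm}^\diamond$ bounds of Proposition~\ref{th:prop-sm}.

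The obstacle is that $f(x+y)$ is in general unbounded, hence not an admissible test function for $\le_{\rm sm}^\diamond$, which ranges only over bounded right-continuous supermodular functions. The fix exploits boundedness: since $X,Y\in L^\infty$, pick $M$ with $|X|,|Y|\le M$ almost surely; because $X^{\rm co},X^{\rm ct}\laweq X$ and $Y^{\rm co},Y^{\rm ct}\laweq Y$, all six random variables share this bound. I would then clamp, setting $\pi_M(t)=(t\vee(-M))\wedge M$ and $\psi(x,y)=f(\pi_M(x)+\pi_M(y))$. This $\psi$ is bounded (its argument lies in $[-2M,2M]$, where the finite convex function $f$ is bounded), continuous hence right-continuous, and supermodular by the observation above applied with $g=h=\pi_M$. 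Since all six variables take values in $[-M,M]$, where $\pi_M$ is the identity, one has $\psi(\cdot,\cdot)=f(\cdot+\cdot)$ on the relevant range, so $\E[\psi(X,Y)]=\E[f(X+Y)]$ and similarly for the comonotonic and counter-monotonic versions. Applying Proposition~\ref{th:prop-sm} to the admissible test function $\psi$ yields the displayed chain for this $f$; as $f\in\mathcal U_{\rm cx}$ was arbitrary and all expectations are finite on $L^\infty$, \eqref{eq:main} follows.

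The genuinely delicate point is this clamping/extension step, that is, replacing the unbounded supermodular cost $f(x+y)$ by a bounded right-continuous supermodular surrogate that leaves every relevant expectation unchanged; verifying that clamping the arguments preserves supermodularity is precisely where convexity of $f$ is used. If one prefers to bypass Proposition~\ref{th:prop-sm}, the two bounds also follow from the quantile characterization \eqref{eq:alter-cx}: all three sums share the common mean $\E[X]+\E[Y]$, the upper bound is the subadditivity of the upper-tail integral $p\mapsto\int_p^1 F^{-1}$ (equivalently of Expected Shortfall) together with the comonotonic identity $F_{X^{\rm co}+Y^{\rm co}}^{-1}=F_X^{-1}+F_Y^{-1}$, and the lower bound is the corresponding counter-monotonic minimization of that functional. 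In that alternative route the counter-monotonic lower bound is the harder half, since there is no clean additivity of quantiles for counter-monotonic pairs; the supermodular route treats both bounds symmetrically and is, in my view, the cleaner path.
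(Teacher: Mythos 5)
Your proof is correct, but it follows a genuinely different route from the paper, which in fact does not prove Lemma~\ref{lem:thL1} at all: the paper imports it as Corollary 3.28 of \cite{R13}, noting only (via the discussion in Section~\ref{sect:two-def}) that $\le_{\rm cx}$ and $\le_{\rm cx}^\dagger$ coincide on $L^\infty$, so the restatement is harmless. You instead derive the lemma inside the paper's own framework, from Proposition~\ref{th:prop-sm}: the composition $(x,y)\mapsto f(g(x)+h(y))$ with $f$ convex and $g,h$ nondecreasing is supermodular, and clamping the arguments via $\pi_M$ turns the unbounded cost $f(x+y)$ into a bounded, continuous (hence right-continuous) supermodular test function $\psi$ that agrees with $f(x+y)$ on the common essential range of all six random variables, so Proposition~\ref{th:prop-sm} delivers both inequalities at once; since every convex $f$ yields finite expectations for bounded arguments, this gives \eqref{eq:main} in either sense of the convex order. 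The steps check out ($\pi_M$ is nondecreasing, $f$ is continuous hence bounded on $[-2M,2M]$, and the co/ct versions obey the same bound $M$ because they share the marginals of $(X,Y)$), and crucially the argument is not circular: Proposition~\ref{th:prop-sm} rests only on Propositions~\ref{prop:conc} and~\ref{prop:equivalent-conc-diamond}, i.e., on the Fr\'echet--Hoeffding bounds and \citet[Corollary 2.1]{tchen1980}, neither of which uses Lemma~\ref{lem:thL1} or Theorem~\ref{th:CX}. As for what each approach buys: the paper's citation is the economical choice, keeping the lemma as imported background, while your derivation makes the $L^\infty$ case self-contained and exhibits it as a corollary of Tchen-type concordance results, treating the two bounds symmetrically---whereas your alternative sketch via \eqref{eq:alter-cx} would indeed stumble on the counter-monotonic half, as you yourself note, since minimality of $\int_p^1 F^{-1}_{X^{\rm ct}+Y^{\rm ct}}(t)\,\d t$ over dependence structures is essentially equivalent to the very bound being proved.
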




\begin{proof}[Proof of Theorem \ref{th:CX}.]  The main idea of the proof is that  appropriately truncating the random variables allows to apply  Lemma \ref{lem:thL1} on $L^{\infty}$, and then to exploit monotone convergence to extend it to the original random variables. We present the developments for the lower bound in \eqref{eq:main}. By changing ``ct'' to ``co'' and flipping the inequalities, one will immediately obtain the proof for the upper bound.

For $m>0$ and a random variable $Z$, let $Z_{(m)}=\max\{\min\{Z,m\},-m\}$, that is, $Z$ truncated at $\pm m$.
The   terms  $X_{(m)}$, $Y_{(m)}$, $X^{\rm ct}_{(m)}$ and $Y^{\rm ct}_{(m)}$ are truncations in the above sense.
Note that $ X^{\rm ct}_{(m)}$ and $Y^{\rm ct}_{(m)} $ are counter-monotonic since truncation is a monotone transform. 
Moreover, $X_{(m)} \laweq X^{\rm ct}_{(m)}$  and  $Y_{(m)} \laweq Y^{\rm ct}_{(m)}$.
Therefore,  Lemma \ref{lem:thL1} implies \begin{align}\label{eq:L1ct}
\E[u(X^{\rm ct}_{(m)}+Y^{\rm ct}_{(m)})]
\le \E[u(X_{(m)}+Y_{(m)})] \mbox{~~~for all $u\in\mathcal{U}_{\rm cx}$},
\end{align}   as soon as the two expectations are well-defined. 

Let $\mathcal U_{\rm cx}^*$ be the set of all $u\in\mathcal{U}_{\rm cx}$ such that $u$ is monotone (i.e., either increasing or decreasing) on $[0,\infty)$ and on $(-\infty,0]$. 
Note that any convex function $u\in \mathcal U_{\rm cx}$  can change monotonicity at most once, say at $x_0$.  Then, the function $v:x\mapsto u(x-x_0)$ is in $\mathcal U_{\rm cx}^*$. 
If $u$ does not change monotonicity, then $u\in \mathcal U_{\rm cx}^*$. 

Next, we make the simple, but nontrivial, observation that $(X_{(m)}+Y_{(m)})_+$ and $(X_{(m)}+Y_{(m)})_-$ are increasing in $m$.  To see this, take  $(X_{(m)}+Y_{(m)})_+$ for instance. The lower truncation at $-m$ does not matter for the sum, 
and one has  $$(X_{(m)}+Y_{(m)})_+=  (\min\{X,m\}+\min\{Y,m\})_+ .$$ 
This observation was made by \cite{S77}.

Denote by $A_{m}=\{ X_{(m)}+Y_{(m)} \ge 0\}$ and $A_m^c$ its complement.
Take any $u\in \mathcal U_{\rm cx}^*$ such that  $\E[u(X+Y)]$ is well-defined.
We have
\begin{align*}
\E[u(X _{(m)}+Y _{(m)})]
&= \E[u( X _{(m)}+Y _{(m)}  ) \id_{A_m} + u( X _{(m)}+Y _{(m)}  )  \id_{A^c_m}   ]
\\ &= \E[u((X _{(m)}+Y _{(m)})_+) \id_{A_m} + u(-(X _{(m)}+Y _{(m)})_-)\id_{A^c_m}    ]
\\ &= \E[u((X _{(m)}+Y _{(m)})_+)-u(0) \id_{A^c_m} + u(-(X _{(m)}+Y _{(m)})_-)-u(0) \id_{A_m}    ]
\\ &= \E[u((X _{(m)}+Y _{(m)})_+)  + u(-(X _{(m)}+Y _{(m)})_-)  ]-u(0).
\end{align*}
Similarly, 
$$
\E[u(X +Y )] = \E[u((X +Y )_+)  + u(-(X +Y )_-)  ]-u(0). 
$$
Because $(X_{(m)}+Y_{(m)})_+\uparrow (X+Y)_+$
and $(X_{(m)}+Y_{(m)})_-\uparrow (X+Y)_-$  as $m\to\infty$,  
the monotone convergence theorem  yields
$\E[u(X _{(m)}+Y _{(m)})] \to  \E[u(X +Y )] $ as $m\to\infty$. 
Applying this to $(X^{\rm ct},Y^{\rm ct})$, we get 
$\E[u(X^{\rm ct}_{(m)}+Y^{\rm ct}_{(m)})] \to  \E[u(X^{\rm ct}+Y^{\rm ct})] $ as $m\to\infty$ if $\E[u(X^{\rm ct}+Y^{\rm ct})]$   is well-defined.
Using \eqref{eq:L1ct}, we obtain $\E[u(X^{\rm ct}+Y^{\rm ct})]\le \E[u(X+Y)] $.

To summarize, we have shown \begin{align}\label{eq:L0ct}
\E[u(X^{\rm ct}+Y^{\rm ct})]\le \E[u(X+Y)]  \mbox{~~~~for all $X,Y\in L^0$ and all $u\in \mathcal U_{\rm cx}^*$},
\end{align} as soon as the expectations are well-defined.

For general $u\in \mathcal U_{\rm cx}$ but not in $\mathcal U_{\rm cx}^*$, we can find $x_0\in \R$ and $v:x\mapsto u(x-x_0)$ such that $v\in \mathcal U_{\rm cx}^*$, which we argued above.
For $X,Y\in L^0$ with well-defined expectations $\E[u(X+Y)]$ and $\E[u(X^{\rm ct}+Y^{\rm ct})]$, take $Z=Y + x_0$. Note that $Z^{\rm ct} =Y^{\rm ct} + x_0$. Using  the result \eqref{eq:L0ct} applied to $(v,X,Z)$, we get 
\begin{align*} 
\E[u(X^{\rm ct}+Y^{\rm ct})]& =\E[u(X^{\rm ct}+Z^{\rm ct} - x_0)]
\\&=\E[v(X^{\rm ct}+Z^{\rm ct}  )]
\\& \le \E[v(X +Z  )] 
= \E[u (X +Z-x_0  )]  =\E[u(X+Y)]   .
\end{align*}
Therefore, we obtain the desired result that \eqref{eq:L0ct} holds for all $u\in \mathcal U_{\rm cx}$,
and thus $X^{\rm ct}+Y^{\rm ct}\le_{\rm cx} X+Y$.
\end{proof}

 Note that 
the proof of Theorem~\ref{th:CX} does not generalize to the multivariate setting, so Theorems \ref{th:CXm} and \ref{th:CXm-counter-monotonic} require separate proofs. The proof of Theorem \ref{th:CXm} was presented in Section~\ref{sect:CXsum-explanations}.

\begin{proof}[Proof of Theorem~\ref{th:CXm-counter-monotonic}]
    The case $d=1$ is trivial and $d=2$ is covered by Theorem~\ref{th:CX}. The cases where at most $2$ random variables are non-degenerate are then also covered because constant shifts do not affect the comparison $\mathbb{E}[u(\sum_{i=1}^d X^{\rm ct}_i)]\leq \mathbb{E}[u(\sum_{i=1}^d X_i)]$, $u\in\mathcal{U}_{\rm cx}$. For all other cases where $d\geq 3$ and a counter-monotonic version of $(X_1,\ldots,X_n)$ exists, the result ensues directly from Theorem~\ref{th:counter-SM}, whose proof follows below, and the supermodularity of functions of the form $(x,y)\mapsto u(x+y)$, $u\in\mathcal{U}_{\rm cx}$. 
\end{proof}

\begin{proof}[Proof of Theorem~\ref{th:counter-SM}]
By Theorem~3 of \cite{D72}, for $d\ge 3$, a $d$-dimensional random vector with at least $3 $ non-degenerate components   has a counter-monotonic version if and only if 
\begin{align}
\label{eq:cond-ct-B}
\mbox{either~~~~} & \sum_{i=1}^d \mathbb{P}(X_i > \essinf X_i) \leq 1,\\
\mbox{or~~~~} &  
\label{eq:cond-ct-A}
\sum_{i=1}^d \mathbb{P}(X_i < \esssup X_i) \leq 1 .
\end{align}
    We assume \eqref{eq:cond-ct-B}, and the other case \eqref{eq:cond-ct-A} is symmetric and omitted. 
 Note that \eqref{eq:cond-ct-B} implies that $\essinf X_i$ is finite for each $i\in [d]$, and without loss of generality we can assume $\essinf X_i=0$ for $i\in [d]$  because constant shifts do not affect the comparison $\mathbb{E}[\varphi(\mathbf{X}^{\rm ct})]\leq \mathbb{E}[\varphi(\mathbf{X})]$, $\varphi\in\Phi^d_{\rm sm}$.

Take $\varphi\in\Phi_{\rm sm}^d$ such that both $ \E[\varphi(\mathbf{X}^{\rm ct})]$  and  $ \E[\varphi(\mathbf{X}^{\rm ct})]$ are well-defined, and without loss of generality assume $\varphi(0,\dots,0) = 0$.
It is straightforward to verify that the supermodular function $\varphi$ with $\varphi(0,\dots,0)=0$ satisfies 
$$
\varphi(x_1,\dots,x_d) \ge \sum_{i=1}^d \varphi(x_i \mathbf e_i),
$$
where $\mathbf e_i=(0,\dots,0,1,0,\dots,0)$ with $1$ at the $i$th position. 
Note that $
\varphi(X_i\mathbf e_i)$
is identically distributed 
as
$\varphi(X_i^{\rm ct}\mathbf e_i)$ for each $i\in [d]$. 
Therefore, 
$\E[ \varphi(X_i\mathbf e_i)]= 
\E[ \varphi(X_i^{\rm ct}\mathbf e_i)]
$
as long as they are well-defined. 
Moreover, 
$$
  \sum_{i=1}^d  \varphi(X_i^{\rm ct}\mathbf e_i)
  = \varphi(\mathbf{X}^{\rm ct}) \mbox{~~~~almost surely,}
  $$
  because $\mathbf{X}^{\rm ct}$ is counter-monotonic; see \citet[Lemma~2]{D72} or \citet[Proposition~3.2]{PW15}. 
Using this, for $\E[\varphi(\mathbf{X}^{\rm ct})]$ to be well-defined, it holds that  $\E[ \varphi(X_i^{\rm ct}\mathbf e_i)]<\infty$ for all $i\in[d]$ or $\E[ \varphi(X_i^{\rm ct}\mathbf e_i)]>-\infty$ for all $i\in[d]$.
Therefore, we get
$$
\E[\varphi(\mathbf X)]
\ge  \sum_{i=1}^d 
 \E\left[\varphi(X_i\mathbf e_i)\right]
=   \sum_{i=1}^d 
 \E\left[\varphi(X_i^{\rm ct}\mathbf e_i)\right]
 = 
 \E\left[  \sum_{i=1}^d  \varphi(X_i^{\rm ct}\mathbf e_i)\right] 
=\E[\varphi(\mathbf{X}^{\rm ct})],
$$ 
where we used the verified fact that all expectations are well-defined. 
\end{proof}

\begin{proof}[Proof of Theorem \ref{th:r1-DRM}]
The chains \ref{item:rho-cons-0}$\Rightarrow$\ref{item:rho-cons-infty}$\Rightarrow$\ref{item:h-concave} and 
\ref{item:rho-cx}$\Rightarrow$\ref{item:rho-cx-infty}$\Rightarrow$\ref{item:h-concave} 
follow   from the definition and \citet[Theorem 3]{WWW20}. It remains to show \ref{item:h-concave}$\Rightarrow$\ref{item:rho-cons-0}
and \ref{item:h-concave}$\Rightarrow$\ref{item:rho-cx}.
Take a concave $h$ and a corresponding distortion risk measure $\rho$ in \eqref{eq:r1-DRM}. Since $h$ is increasing and concave, it is left-continuous. Using Lemma 1 of \cite{WWW20a}, we have 
$$
\rho(X) = \int_0^1 F^{-1}_X(1-t) \d h(t) ,~~~ X \in L^0. 
$$
Let $g(t)=1-h(1-t)$ for $t\in [0,1]$, with its right derivative denoted by $g' $; since $g$ is convex, $g'$ is increasing. The right derivative of $g'$ is given by $g''$, which exists almost everywhere and it is positive. 
Further, let  $F_X^{-1}(1)=\inf\{x \in \R: \p(X\le x)= 1\}$,
and $\delta=g(1)-\lim_{t\uparrow 1}g(t)$.   
It follows that  
    \begin{align*}
\rho(X) &= \int_0^1 F^{-1}_X(t) \d g(t) \\
 & = \int_0^1 F^{-1}_X(t) g'(t) \d t  + \delta F_X^{-1}(1)  \\
 & = \int_0^1 \int_0^t  F^{-1}_X(t) g''(s)  \d s \d t  + \delta F_X^{-1}(1) \\
 & = \int_0^1 g''(s)   \int_s^1  F^{-1}_X(t)  \d t  \d s   + \delta F_X^{-1}(1) .
    \end{align*}
    Since   $X\mapsto \int_s^1  F^{-1}_X(t)  \d t  $
   is $\le_{\rm cx}$-consistent  on $L^0$ 
for $s\in (0,1)$ by using Theorem \ref{th:ES-based-condition}-\ref{item:cond-2}
      and $X\mapsto F_X^{-1}(1)   $
        is $\le_{\rm cx}$-consistent on $L^0$ by using Theorem \ref{th:ES-based-condition}-\ref{item:cond-3}, we conclude that $\rho$ is $\le_{\rm cx} $-consistent on $L^0$, and thus \ref{item:rho-cons-0} holds. 
        Similarly, both $X\mapsto \int_s^1  F^{-1}_X(t)  \d t  $ and $X\mapsto F_X^{-1}(1) $  are convex due to the convexity of ES on $L^0$, and hence \ref{item:rho-cx} holds. 
\end{proof}

\subsection*{Acknowledgments}
The authors thank an editor, two anonymous referees, Alfred M\"uller, and Giovanni Puccetti for helpful comments on an early version of the paper. Ruodu Wang is supported by the Natural Sciences and Engineering Research Council of Canada (RGPIN-2018-03823, RGPAS-2018-522590) and Canada Research Chairs (CRC-2022-00141).


\begin{thebibliography}{10}

\bibitem[\protect\citeauthoryear{Arab et al.}{Arab et al.}{2025}]{ALO25}
Arab, I., Lando, T., and Oliveira, P. E. (2025). Convex combinations of random variables stochastically dominate the parent for a new class of heavy tailed distributions. \emph{Electronic Communications in Probability}, \textbf{30}, 1--11.

    
 	\bibitem[\protect\citeauthoryear{Bernard et al.}{Bernard et al.}{2014}]{bernard2014risk}
 	Bernard, C., Jiang, X., and Wang, R. (2014). Risk aggregation with dependence uncertainty. \emph{Insurance: Mathematics and Economics}, \textbf{54}, 93--108.
    
 	
  \bibitem[\protect\citeauthoryear{Beiglb\"ock et al.}{2013}]{BHP13}
Beiglb{\"o}ck, M., Henry-Labord\`{e}re, P., and Penkner, F. (2013). Model-independent bounds for option prices---a mass transport approach. \emph{Finance and Stochastics}, \textbf{17}(3), 477--501.


\bibitem[\protect\citeauthoryear{Chen et al.}{Chen et al.}{2025a}]{CEW25} 
Chen, Y., Embrechts, P., and Wang, R. (2025a). Risk exchange under infinite-mean Pareto models. \emph{Insurance: Mathematics and Economics}, \textbf{124}, 103131.

\bibitem[\protect\citeauthoryear{Chen et al.}{Chen et al.}{2025b}]{chen2024technical} Chen, Y., Embrechts, P. and Wang, R. (2025b). Technical note---an unexpected stochastic dominance: Pareto distributions, dependence, and diversification. \emph{Operations Research}, \textbf{73}(3), 1336--1344.

\bibitem[\protect\citeauthoryear{Chen et al.}{Chen et al.}{2025c}]{CHSZ25} 
Chen, Y., Hu, T., Shneer, S., and Zou, Z. (2025c). Stochastic dominance for linear combinations of infinite-mean risks. \textit{arXiv}:2505.01739.
    
 	\bibitem[\protect\citeauthoryear{Chen and Shneer}{Chen and Schneer}{2025}]{chen2024super} Chen, Y. and Shneer, S. (2025). Risk aggregation and stochastic dominance for a class of heavy-tailed distributions. \emph{ASTIN Bulletin}, forthcoming.

    \bibitem[\protect\citeauthoryear{Chen and Wang}{Chen and Wang}{2025}]{CW25}
 	Chen, Y. and Wang, R. (2025). Infinite-mean models in risk management: Discussions and recent advances. \emph{Risk Sciences}, 100003.
 

 
\bibitem[\protect\citeauthoryear{Cheung et al.}{Cheung et al.}{2014}]{CDLT14} 
Cheung, K. C., Dhaene, J., Lo, A. and Tang, Q. (2014). Reducing risk by merging counter-monotonic risks. \emph{Insurance: Mathematics and Economics}, \textbf{54}, 58--65.



\bibitem[\protect\citeauthoryear{Cheung and Lo}{Cheung and Lo}{2014}]{CL14}{Cheung, K. C. and Lo, A.} (2014). Characterizing mutual exclusivity as the strongest negative multivariate dependence
structure. \emph{Insurance: Mathematics and Economics}, \textbf{55}, 180--190.

\bibitem[\protect\citeauthoryear{Dall'Aglio}{Dall'Aglio}{1972}]{D72}
Dall'Aglio, G. (1972).
 Fr\'echet classes and compatibility of distribution functions.
  {\em Symposia Mathematica}, \textbf{9}, 131--150.


\bibitem[\protect\citeauthoryear{Denuit and Dhaene}{Denuit and Dhaene}{2012}]{DD12}
Denuit, M. and Dhaene, J. (2012). Convex order and comonotonic conditional mean risk sharing. \emph{Insurance: Mathematics and Economics}, \textbf{51}(2), 265--270.
  
\bibitem[\protect\citeauthoryear{Denuit et al.}{Denuit et al.}{2005}]{DDGK05}
{Denuit, M., Dhaene, J., Goovaerts and M.J., Kaas, R. } (2005). \emph{Actuarial Theory for Dependent Risks}.  Wiley, Chichester.



\bibitem[\protect\citeauthoryear{Dhaene et al.}{Dhaene et al.}{2002}]{DDGKV02}
{Dhaene, J., Denuit, M., Goovaerts, M.~J., Kaas, R. and Vynche, D.} (2002). {The concept of comonotonicity in actuarial science and finance: Theory}. {\em Insurance: Mathematics and Economics},
\textbf{31}(1), 3--33.

 
 
\bibitem[\protect\citeauthoryear{Durrett}{Durrett}{2019}]{D19}  Durrett, R. (2019). \emph{Probability: Theory and Examples}. Cambridge University Press. Fifth Edition.

\bibitem[\protect\citeauthoryear{Eeckhoudt et al.}{Eeckhoudt et al.}{2007}]{ERS07}
Eeckhoudt, L., Rey, B., and Schlesinger, H. (2007). A good sign for multivariate risk taking. \emph{Management Science,} \textbf{53}(1), 117--124.

\bibitem[\protect\citeauthoryear{Elton and Hill}{Elton and Hill}{1992}]{elton1992fusions}
 Elton, J. and Hill, T. P. (1992). Fusions of a probability distribution. \emph{The Annals of Probability}, \textbf{20}(1), 421--454.



\bibitem[\protect\citeauthoryear{Filipovi\'c and Svindland}{Filipovi\'c and Svindland}{2012}]{FS12}
Filipovi\'c, D. and Svindland, G. (2012). The canonical model space for law-invariant convex risk measures is $L^1$. \emph{Mathematical Finance}, \textbf{22}(3), 585--589.


\bibitem[\protect\citeauthoryear{F{\"o}llmer and Schied}{F{\"o}llmer and Schied}{2016}]{FS16}
F{\"o}llmer, H. and Schied, A. (2016). \emph{Stochastic Finance: An Introduction in Discrete Time}. Walter de Gruyter. Fourth Edition. 


\bibitem[\protect\citeauthoryear{Hirsch et al.}{2011}]{HPRY11}
Hirsch, F., Profeta, C., Roynette, B. and Yor, M. (2011). \emph{Peacocks and Associated Martingales, with Explicit Constructions}. Springer Science and Business Media.




\bibitem[\protect\citeauthoryear{Kaas et al.}{Kaas et al.}{2000}]{KDG00}
Kaas, R., Dhaene, J. and Goovaerts, M. (2000). 
Upper and lower bounds for sums of random variables. \emph{Insurance: Mathematics and Economics}, \textbf{27}(2), 151--168.


 \bibitem[\protect\citeauthoryear{Lorentz}{Lorentz}{1953}]{L53}
 Lorentz,  G. G. (1953). An inequality for rearrangements. \emph{American Mathematical Monthly}, \textbf{60}(3), 176--179.




\bibitem[\protect\citeauthoryear{Mao and Wang}{2020}]{MW20}
{Mao, T. and Wang, R.} (2020).  Risk aversion in regulatory capital calculation. \emph{SIAM Journal on Financial Mathematics}, \textbf{11}(1), 169--200.



\bibitem[\protect\citeauthoryear{McNeil et al.}{McNeil et al.}{2015}]{MFE15}
{McNeil, A. J., Frey, R. and Embrechts, P.} (2015). \emph{Quantitative
Risk Management: Concepts, Techniques and Tools}. Revised Edition.  Princeton, NJ:
Princeton University Press.




\bibitem[\protect\citeauthoryear{M{\"u}ller}{M{\"u}ller}{1997a}]{M97integral}
M\"uller, A. (1997a). Stochastic orders generated by integrals: a unified study. \emph{Advances in Applied Probability}, \textbf{29}(2), 414--428.


\bibitem[\protect\citeauthoryear{M{\"u}ller}{M{\"u}ller}{1997b}]{M97}
 {M{\"u}ller, A.} (1997b). Stop-loss order for portfolios of dependent risks. \emph{Insurance: Mathematics and Economics},
  \textbf{21}(3), 219--223.

\bibitem[\protect\citeauthoryear{M{\"u}ller}{M{\"u}ller}{2025}]{M24}
 {M{\"u}ller, A.}   (2025). Some remarks on the effect of risk sharing and diversification for infinite mean risks. \emph{ASTIN Bulletin}, forthcoming.
  

\bibitem[\protect\citeauthoryear{M{\"u}ller and Scarsini}{M{\"u}ller and Scarsini}{2000}]{MS00}
M{\"u}ller, A. and Scarsini, M. (2000). Some remarks on the supermodular order. \emph{Journal of Multivariate Analysis}, \textbf{73}(1), 107--119.


\bibitem[\protect\citeauthoryear{M{\"u}ller and Stoyan}{M{\"u}ller and Stoyan}{2002}]{muller2002}
M{\"u}ller, A. and Stoyan, D. (2002). \emph{Comparison Methods for Stochastic Models and Risks}. Wiley.
 
 
 

\bibitem[\protect\citeauthoryear{Puccetti et al.}{2019}]{PRWW19}
{Puccetti, G., Rigo, P., Wang, B. and Wang, R.} (2019).
	Centers of probability measures without the mean.
 {\em Journal of Theoretical Probability}, \textbf{32}(3), 1482--1501.


\bibitem[\protect\citeauthoryear{Puccetti and Wang}{Puccetti and
  Wang}{2015}]{PW15}
Puccetti, G. and Wang R.  (2015).
Extremal dependence concepts.
 \emph{Statistical Science},  \textbf{30}(4), 485--517.
 
 \bibitem[\protect\citeauthoryear{Rachev and R{\"u}schendorf}{Rachev and R{\"u}schendorf}{2006}]{R06MT}
Rachev, S. T. and R{\"u}schendorf, L. (2006). \textit{Mass Transportation Problems, Volume 1: Theory.} Springer, New York.

\bibitem[\protect\citeauthoryear{Rothschild and Stiglitz}{Rothschild and Stiglitz}{1970}]{RS70}
Rothschild, M. and Stiglitz, J. E. (1970). Increasing risk: I. A definition. \emph{Journal
 of Economic Theory}, \textbf{2}(3), 225–243.
 
 \bibitem[\protect\citeauthoryear{R{\"u}schendorf}{R{\"u}schendorf}{2013}]{R13}
R{\"u}schendorf, L. (2013).
  {\em Mathematical Risk Analysis. Dependence, Risk Bounds, Optimal
  Allocations and Portfolios}.
  Springer, Heidelberg.
  
  
\bibitem[\protect\citeauthoryear{Simons}{Simons}{1977}]{S77}
Simons, G. (1977).
  An unexpected expectation.
  {\em Annals of Probability}, {\bf 5}(1), 157--158.
  
  
  \bibitem[\protect\citeauthoryear{Shaked and Shanthikumar}{Shaked and Shanthikumar}{2007}]{shaked2007}
   Shaked, M. and Shanthikumar, J. G. (2007). \emph{Stochastic Orders}. Springer, New York.

   

\bibitem[\protect\citeauthoryear{Strassen}{1965}]{S65} {Strassen, V.} (1965). The existence of probability measures with given marginals. \emph{Annals of Mathematical Statistics}, \textbf{36}(2), 423--439.



   \bibitem[\protect\citeauthoryear{Tchen}{Tchen}{1980}]{tchen1980} 
  Tchen, A. H. (1980). Inequalities for distributions with given marginals. \emph{Annals of Probability}, \textbf{8}(4), 814–827.

\bibitem[\protect\citeauthoryear{Torgersen}{1991}]{T91}
Torgersen, E. N. (1991). \emph{Comparison of Statistical Experiments}. Cambridge University Press, Cambridge.

\bibitem[\protect\citeauthoryear{Villani}{Villani}{2009}]{Villani}
Villani, C. (2009). \textit{Optimal Transport: Old and New}. Springer, Heidelberg.

\bibitem[\protect\citeauthoryear{Vincent}{Vincent}{2025}]{V25}
Vincent, L. (2025). Diversification and stochastic dominance: When all eggs are better put in one basket.  \textit{arXiv}:2507.16265.



\bibitem[\protect\citeauthoryear{Wang et al.}{Wang et al.}{2020a}]{WWW20a}
Wang, Q., Wang, R. and Wei, Y.  (2020a). Distortion riskmetrics on general spaces. \emph{ASTIN Bulletin}, \textbf{50}(4), 827--851. 




\bibitem[\protect\citeauthoryear{Wang et al.}{Wang et al.}{2020b}]{WWW20}
Wang, R., Wei, Y. and Willmot, G. E. (2020b). Characterization, robustness and aggregation of signed Choquet integrals. \emph{Mathematics of Operations Research}, \textbf{45}(3), 993--1015.

\bibitem[\protect\citeauthoryear{Wang et al.}{Wang et al.}{1997}]{WYP97}
{Wang, S., Young, V. R. and Panjer, H. H.} (1997). Axiomatic characterization of insurance prices. \emph{Insurance: Mathematics and Economics}, \textbf{21}(2), 173--183.




\bibitem[\protect\citeauthoryear{Whitt}{Whitt}{1986}]{W86}
Whitt, W. (1986). Stochastic comparisons for non-Markov processes. \emph{Mathematics of Operations Research}, \textbf{11}(4), 608--618.






%
%
%
%
%
%
%
\end{thebibliography}
 \end{document}